\numberwithin{theorem}{section}
\newtheorem{theoremm}{Theorem}\numberwithin{theoremm}{subsection}
\newtheorem{deffinition}[theoremm]{Definition}
\newtheorem{lemmma}[theoremm]{Lemma}
\newtheorem{corrollary}[theoremm]{Corollary}
\newtheorem{nottation}[theoremm]{Notation}
\newtheorem{propposition}[theoremm]{Proposition}
\numberwithin{theoremmm}{subsubsection}
\theoremstyle{remark}
\newtheorem{remmark}[theoremm]{Remark}
\newtheorem{exxample}[theoremm]{Example}
\newcommand{\Rad}{\operatorname{Rad}}
\newcommand{\Aut}{\operatorname{Aut}}
\newcommand{\Alt}{\mathcal{A}}
\newcommand{\PSL}{\operatorname{PSL}}
\newcommand{\sh}{\operatorname{sh}}
\newcommand{\ord}{\operatorname{ord}}
\newcommand{\Sym}{\mathcal{S}}
\newcommand{\A}{\operatorname{A}}
\renewcommand{\L}{\operatorname{L}}
\renewcommand{\l}{\operatorname{l}}
\newcommand{\C}{\operatorname{C}}
\newcommand{\mao}{\operatorname{mao}}
\newcommand{\Soc}{\operatorname{Soc}}
\newcommand{\Inn}{\operatorname{Inn}}
\newcommand{\id}{\operatorname{id}}
\newcommand{\fix}{\operatorname{fix}}
\newcommand{\Out}{\operatorname{Out}}
\newcommand{\T}{\mathcal{T}}
\newcommand{\PGL}{\operatorname{PGL}}
\newcommand{\Gal}{\operatorname{Gal}}
\newcommand{\GL}{\operatorname{GL}}
\newcommand{\Frob}{\operatorname{Frob}}
\newcommand{\im}{\operatorname{im}}
\newcommand{\G}{\mathcal{G}}
\newcommand{\fin}{\mathrm{fin}}
\newcommand{\rel}{\mathrm{rel}}
\newcommand{\Mod}[1]{\ (\textup{mod}\ #1)}
\newcommand{\cha}{\operatorname{char}}
\renewcommand{\k}{\operatorname{k}}
\newcommand{\dl}{\operatorname{dl}}
\renewcommand{\P}{\operatorname{P}}
\newcommand{\Cc}{\mathcal{C}}
\newcommand{\F}{\operatorname{F}}
\newcommand{\maxsqrt}{\operatorname{maxsqrt}}
\newcommand{\degsum}{\operatorname{degsum}}
\newcommand{\Irr}{\operatorname{Irr}}
\newcommand{\mindeg}{\operatorname{mindeg}}
\renewcommand{\F}{\mathbb{F}}
\renewcommand{\O}{\mathcal{O}}
\newcommand{\Ll}{\mathcal{L}}
\newcommand{\f}{\operatorname{f}}
\renewcommand{\t}{\operatorname{t}}
\newcommand{\Oo}{\operatorname{O}}
\newcommand{\Func}{\operatorname{F}}
\begin{document}

\title{Finite groups with an automorphism inverting, squaring or cubing a non-negligible fraction of elements}

\author{Alexander Bors\thanks{University of Salzburg, Mathematics Department, Hellbrunner Stra{\ss}e 34, 5020 Salzburg, Austria. \newline E-mail: \href{mailto:alexander.bors@sbg.ac.at}{alexander.bors@sbg.ac.at} \newline The author is supported by the Austrian Science Fund (FWF):
Project F5504-N26, which is a part of the Special Research Program \enquote{Quasi-Monte Carlo Methods: Theory and Applications}. \newline 2010 \emph{Mathematics Subject Classification}: Primary: 20D25, 20D45, 20D60. Secondary: 12E20, 20C15, 20D05. \newline \emph{Key words and phrases:} Finite groups, Automorphisms, Powers of group elements, Almost-solvability, Almost-abelianity.}}

\date{\today}

\maketitle

\abstract{There are various results in the literature which are part of the general philosophy that a finite group for which a certain parameter (for example, the number of conjugacy classes or the maximum number of elements inverted, squared or cubed by a single automorphism) is large enough must be close to being abelian. In this paper, we show the following: Fix a real number $\rho$ with $0<\rho\leq 1$. Then a finite group $G$ with an automorphism inverting or squaring at least $\rho|G|$ of the elements in $G$ is \enquote{almost abelian} in the sense that both the index and the derived length of the solvable radical of $G$ are bounded. Furthermore, if $G$ has an automorphism cubing at least $\rho|G|$ of the elements in $G$, then $G$ is \enquote{almost solvable} in the sense that the index of the solvable radical of $G$ is bounded.}

\section{Introduction}\label{sec1}

\subsection{Background and main results}\label{subsec1P1}

In the literature, there exist various results on \enquote{quantitative} conditions on finite groups $G$ that imply abelianity or a weaker property, such as nilpotency or solvability. We mention the following examples (and remark that all rational constants appearing in these results are optimal):

\begin{enumerate}
\item For $e\in\mathbb{Z}$, denote by $\L_e(G)$ the maximum number of elements of $G$ that are mapped to their $e$-th power by a single automorphism of $G$. Then either of the following implies that $G$ is abelian: $\L_{-1}(G)>\frac{3}{4}|G|$ (this is already mentioned as \enquote{known} by Miller in 1929 (see \cite[first paragraph]{Mil29a}); for a short, elementary proof, see \cite{GPa}), $\L_2(G)>\frac{1}{2}|G|$ \cite[Theorem 3.5]{Lie73a}, $\L_3(G)>\frac{3}{4}|G|$ \cite[Theorem 4.1]{Mac75a}. On the other hand, for all $e\in\mathbb{Z}\setminus\{-1,0,2,3\}$, there exists a finite nonabelian group $G_e$ such that the map $G_e\rightarrow G_e,g\mapsto g^e$, is an automorphism of $G_e$ \cite{Mil29b}. Furthermore, either of the following implies that $G$ is solvable: $\L_{-1}(G)>\frac{4}{15}|G|$ \cite[Corollary 3.2]{Pot88a}, $\L_2(G)>\frac{7}{60}|G|$ \cite[Theorem C]{Heg03a}, $\L_3(G)>\frac{4}{15}|G|$ \cite[Theorem 4.1]{Heg09a}. Finally, it is known that for solvable $G$ and fixed $\rho\in\left(0,1\right]$, a condition of the form $\L_{-1}(G)\geq\rho|G|$ implies that the derived length of $G$ is bounded from above \cite[Theorems 1.1 and 2.6]{Heg05a}.
\item Denote by $\k(G)$ the number of conjugacy classes of $G$. If $\k(G)>\frac{5}{8}|G|$, then $G$ is abelian \cite{Gus73a}. If $\k(G)>\frac{1}{2}|G|$, then $G$ is nilpotent \cite[Th{\'e}or{\`e}me 7]{Les87a} (see also \cite[Corollary 3.2]{Les95a}). If $\k(G)>\frac{1}{12}|G|$, then $G$ is solvable \cite{Les88a,Les89a} (see also the stronger result \cite[Theorem 11]{GR06a}). More generally, if $\k(G)\geq\rho|G|$ for some fixed $\rho\in\left(0,1\right]$, then both the index of the Fitting subgroup of $G$ and the derived length of the solvable radical of $G$ are bounded in terms of $\rho$ \cite[Theorem 10(ii) and Theorem 12 in combination with Lemma 2(iii)]{GR06a}.
\item Denote by $\mao(G)$ the maximum automorphism order of $G$. If $G$ is nontrivial, then $\mao(G)\leq|G|-1$, and the bound is attained if and only if $G$ is elementary abelian \cite[Theorem 2]{Hor74a}. Moreover, if $\mao(G)>\frac{1}{2}|G|$, then $G$ is abelian \cite[Theorem1.1.1(1)]{Bor15c}, if $\mao(G)>\frac{1}{10}|G|$, then $G$ is solvable \cite[Theorem 1.1.1(2)]{Bor15c}, and if $\mao(G)\geq\rho|G|$ for any fixed $\rho\in\left(0,1\right]$, then the index of the solvable radical of $G$ is bounded \cite[Theorem 1.1.1(3)]{Bor15c}.
\end{enumerate}

The uniting \enquote{philosophy} behind all the results above is that a finite group $G$ for which the parameter in question is large enough, i.e., larger than $\rho|G|$ for a fixed (large enough) $\rho\in\left(0,1\right]$, must be abelian or at least \enquote{not too far from being abelian}. Whereas, as mentioned above, results on consequences of such conditions for general, arbitrarily small $\rho\in\left(0,1\right]$ exist for the functions $\k$ and $\mao$, there are, to the author's knowledge, no such results for the functions $\L_e$ except for Hegarty's bound on the derived length for solvable $G$ and $e=-1$ mentioned above.

The purpose of this paper is to study finite groups in which $\L_e(G)\geq\rho|G|$ for a fixed, but arbitrary $\rho\in\left(0,1\right]$ and $e\in\{-1,2,3\}$. For $e=-1,2$, we will see that both the index and the derived length of the solvable radical of $G$ can be bounded. For $e=3$, we show that at least the index of the solvable radical of $G$ is bounded; whether or not such a condition is in general strong enough to impose an upper bound on the derived length of the solvable radical of $G$ remains open.

For a more concise formulation of our main results and in order to point out the connection to the \enquote{philosophy} mentioned above, we first introduce some terminology and notation. We denote the solvable radical of a finite group $G$ by $\Rad(G)$ and the derived length of a solvable group $H$ by $\dl(H)$.

\begin{deffinition}\label{almostDef}
Let $G$ be a finite group, and $I,L\in\left[0,\infty\right)$.

\begin{enumerate}
\item $G$ is called \textbf{$I$-almost-solvable} if and only if $[G:\Rad(G)]\leq I$.

\item $G$ is called \textbf{$(I,L)$-almost-abelian} if and only if $G$ is $I$-almost-solvable and $\dl(\Rad(G))\leq L$.
\end{enumerate}
\end{deffinition}

We note that the properties from Definition \ref{almostDef} can be viewed as a sort of \enquote{quantitative generalization} of solvability and abelianity respectively: They get weaker under increasing the parameter(s) involved, $1$-solvability is the same as solvability, and $(1,1)$-abelianity is the same as abelianity.

\begin{nottation}\label{constantsNot}
We define the following constants:

\begin{enumerate}
\item $E_0:=0.705(1+\log_{20160}(12))=0.8817\ldots$.

\item $E_1:=(\frac{E_0+\log_{20160}(12)+\frac{1}{3}\log_{60}(24)}{1+\log_{20160}(12)+\frac{1}{3}\log_{60}(24)}-1)^{-1}=-12.7650\ldots$.
\end{enumerate}
\end{nottation}

We now give our main results:

\begin{theoremm}\label{mainTheo}
Let $G$ be a finite group.

\begin{enumerate}
\item For all $\rho\in\left(0,1\right]$: If $\L_{-1}(G)\geq\rho|G|$, then $G$ is $(\rho^{E_1},\max(2,\log_{3/4}(2\rho)+3))$-almost-abelian.

\item For all $\rho\in\left(0,1\right]$: If $\L_2(G)\geq\rho|G|$, then $G$ is $(\rho^{-4},2\cdot\log_{3/4}(\rho)+1)$-almost-abelian.

\item There exists a function $g:\left(0,1\right]\rightarrow\left[0,\infty\right)$ such that for all finite groups $G$ and all $\rho\in\left(0,1\right]$, if $\L_3(G)\geq\rho|G|$, then $G$ is $g(\rho)$-almost-solvable.
\end{enumerate}
\end{theoremm}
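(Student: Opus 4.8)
The plan is to reduce the general statement about $\L_3(G)\geq\rho|G|$ to structural facts about the solvable radical, building on the same machinery that handles the $e=-1$ and $e=2$ cases, but being content with a much weaker conclusion. Write $R=\Rad(G)$ and $\overline{G}=G/R$, so that $\overline{G}$ has no nontrivial solvable normal subgroup; the goal is to bound $|\overline{G}|$ in terms of $\rho$. Let $\alpha\in\Aut(G)$ cube at least $\rho|G|$ elements, and let $S=\{g\in G: \alpha(g)=g^3\}$, so $|S|\geq\rho|G|$. The first step is the standard averaging/pigeonhole reduction: since $|S|\geq\rho|G|$, there is a coset $xR$ of $R$ on which $S$ is dense, i.e. $|S\cap xR|\geq\rho|R|$; more usefully, one averages over cosets to find that for a positive proportion (at least $\rho$) of cosets $\overline{g}\in\overline{G}$, the set $S$ meets $gR$ in at least a $\rho$-fraction. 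This forces $\alpha$ to map a $\rho$-fraction of cosets to their image under the induced map on $\overline{G}$ in a controlled way, and in particular the induced automorphism $\overline{\alpha}$ of $\overline{G}$ (which exists because $R$ is characteristic) cubes at least $\rho|\overline{G}|$ elements of $\overline{G}$. So it suffices to prove the theorem for groups $G$ with $\Rad(G)=1$, i.e. groups with trivial solvable radical, and there to bound $|G|$.

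For a group $\overline{G}$ with trivial solvable radical, the socle $N=\Soc(\overline{G})$ is a direct product of nonabelian simple groups, $N=T_1\times\cdots\times T_k$, and $\overline{G}$ embeds into $\Aut(N)=\prod_i\Aut(S_i)\wr(\text{permutation of the factors})$, with $C_{\overline{G}}(N)=1$. The key point to exploit is that the condition \enquote{$\overline{\alpha}$ cubes a $\rho$-fraction of $\overline{G}$} is very restrictive for direct products of nonabelian simple groups. Concretely, if $\overline{\alpha}$ cubes $g\in\overline{G}$ then it cubes every power of $g$, hence the set of cubed elements is a union of cyclic subgroups, and restricting attention to the subgroup $N$ (or to a large characteristic-in-$\overline{G}$ subgroup) one deduces that $\overline{\alpha}$ must cube a $\rho$-fraction of $N$ as well — this uses the averaging over cosets of $N$ inside $\overline{G}$ again. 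Then one invokes the structure of $\Aut(N)$: the automorphism $\overline{\alpha}$ permutes the simple factors $T_i$, and on the factors it fixes (as a set) it acts as an automorphism of $T_i$, while factors that are genuinely permuted contribute essentially no cubed elements beyond those forced by fixed points of the permutation. Counting shows that if too many factors are present, or if the simple factors are too large, the proportion of cubed elements drops below $\rho$; this is where one uses that in a nonabelian simple group $T$, no automorphism can cube more than a bounded fraction $c<1$ of elements (this is the $e=3$ analogue of the $\frac{3}{4}$ and $\frac{1}{2}$ bounds — the relevant input is that $\L_3(T)/|T|$ is bounded away from $1$ uniformly over nonabelian simple $T$, which follows from \cite[Theorem 4.1]{Heg09a} or directly). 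Since a product of $k$ factors can have at most $c^{k}$-ish fraction cubed (roughly), $k$ is bounded by $\log_{1/c}(1/\rho)$, and similarly each $|T_i|$ is bounded; hence $|N|$ is bounded, hence $|\Aut(N)|$ is bounded, hence $|\overline{G}|$ is bounded. Taking $g(\rho)$ to be this bound completes the proof.

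The main obstacle I expect is the step that passes the \enquote{$\rho$-fraction of $\overline{G}$ is cubed} hypothesis down to the socle $N$ and then controls the contribution of permuted simple factors: one has to be careful that an automorphism which merely normalizes $N$ and induces a nontrivial permutation on the $T_i$ can still cube a substantial fraction of $N$ if the permutation has large fixed-set, and that the \enquote{diagonal} elements contribute correctly. Making the counting precise — effectively showing that the fraction of cubed elements in $T_1\times\cdots\times T_k$ under an arbitrary element of $\Aut(N)$ is at most $(\max_T \L_3(T)/|T|)^{\,(\text{number of permutation-orbits of factors})}$ up to harmless factors, and that permutation-orbits of size $\geq 2$ kill the fraction — is the technical heart, and it is exactly the point where one cannot afford to be sloppy about which subgroup is characteristic in $\overline{G}$ and which averaging is legitimate. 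Everything else (the two reductions via coset-averaging, and the final assembly of bounds on $k$, on $|T_i|$, and on $|\Aut(N)|$) is routine once that estimate is in place. Note that, unlike parts (1) and (2), no attempt is made to bound the derived length of $\Rad(G)$ here, and indeed none of the above gives any control over $\Rad(G)$ beyond its being solvable — which is why the statement only asserts almost-solvability and leaves the almost-abelianity question open.
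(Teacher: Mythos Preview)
Your architecture matches the paper's: reduce to the semisimple quotient $\overline{G}=G/\Rad(G)$ via the CQ-increasing property of $\l_3$, then bound $|\overline{G}|$ by controlling the socle $N=S_1^{n_1}\times\cdots\times S_r^{n_r}$. But two of your steps are genuine gaps, not merely places to be careful.

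First, the order bound on the simple factors. You only invoke a \emph{uniform} bound $\L_3(T)/|T|\leq c<1$ for nonabelian simple $T$ (Hegarty's $4/15$). That is not enough: with $k=1$ and $\rho<c$, a single arbitrarily large simple $T$ with $\l_3(T)\approx c$ would pass your hypothesis, so no bound on $|T_i|$ follows. What is actually needed is $\L_3(\Aut(S))/|S|\to 0$ as $|S|\to\infty$. This is Theorem~\ref{almostSimpleTheo}, and it is not cheap: for $S\not\cong\A_1(q)$ one balances Lemma~\ref{lELem} against Lemma~\ref{lThreeLem} using the $\k(S)\leq|S|^{1/4+o(1)}$ of Corollary~\ref{lieCor}, but for $\A_1(q)$ (where $\k(S)\sim|S|^{1/3}$) those bounds do not meet and one must count $\F_q$-solutions of an explicit polynomial equation, exploiting a lacunarity argument (Lemma~\ref{lacunaryLem}, Theorem~\ref{a1Theo}). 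Your proposal contains nothing that substitutes for this.

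Second, your passage from $\overline{G}$ to its socle $N$ by \enquote{averaging over cosets} does not yield what you claim. Averaging gives a coset $gN$ with $|\P_3(\overline\alpha)\cap gN|\geq\rho|N|$, but by Corollary~\ref{shiftCor} the condition $ng\in\P_3(\overline\alpha)$ reads $\overline\alpha(n)=n\,\tau_g(n)\,\tau_g^2(n)$, \emph{not} $\overline\alpha(n)=n^3$; so you do not obtain $\l_3(\overline\alpha_{\mid N})\geq\rho$, and there is no CS-increasing property of $\l_3$ to appeal to. The paper never descends to $N$. Instead it works inside $\Aut(S^n)=\Aut(S)\wr\Sym_n$, counts $K$-coset-wise for $K=H\cap\Aut(S)^n$, uses the \enquote{opportune index} machinery of Subsection~\ref{subsec2P6} (Lemma~\ref{opportuneLem}) to kill cosets where $\sigma_\alpha$ or $\sigma_\beta$ moves many indices, and on the common fixed indices invokes Lemma~\ref{nonShiftLem} (a fixed-point-free-automorphism argument via \cite{Row95a}) to show the twisted equation fails for at least one $s_i\in S$, producing a genuine $(1-1/|S|)^{n-M}$ decay and hence the bound on $n$. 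Your $c^k$ heuristic is morally this last step, but only for the trivial permutation, and the permutation case is exactly what you flagged as the obstacle without resolving.
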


We remark that the bound on the derived length of the solvable radical in Theorem \ref{mainTheo}(1) is an easy consequence of Hegarty's result \cite[Theorem 2.6]{Heg05a}; all other subresults of the theorem are essentially new.

\subsection{Overview on the paper}\label{subsec1P2}

Although our main results allow for a concise formulation and are easy to understand with some canonical knowledge in group theory, we will require a lot of tools for their proofs, \textit{inter alia} the classification of finite simple groups (CFSG), the structure theory of finite groups with trivial solvable radical and a bit of character theory. In view of this, we use Section \ref{sec2} to develop enough theory to allow for a straightforward discussion of the proofs of the main results in Section \ref{sec3}. More precisely:

\begin{itemize}
\item We begin by proving inequalities relating the number of elements squared or cubed by a finite group automorphism with the number of fixed points of that automorphism in Subsection \ref{subsec2P1}.

\item In Subsection \ref{subsec2P2}, we show how to deduce from known results that the maximum number of elements inverted by an inner automorphism of a finite group is bounded from above by the complex character degree sum of the group.

\item In Subsection \ref{subsec2P3}, we deduce some corollaries from known theorems on conjugacy class numbers and outer automorphism group orders of nonabelian finite simple groups. Together with the observations from Subsection \ref{subsec2P2}, this will allow us to show that $\L_{-1}(\Aut(S))\leq|S|^{E_0}$ for all nonabelian finite simple groups $S$.

\item Subsection \ref{subsec2P4} is dedicated to some general results allowing us to derive almost-solvability and almost-abelianity results under \enquote{suitably well-behaved} quantitative assumptions on finite groups.

\item The main result of Subsection \ref{subsec2P5} states that $\L_3(\Aut(S))/|S|\to 0$ as $|S|\to\infty$ for nonabelian finite simple groups $S$.

\item Finally, in Subsection \ref{subsec2P6}, we derive a certain coset-wise upper bound on the number of elements cubed by an automorphism of a finite group with trivial solvable radical.
\end{itemize}

As mentioned before, Section \ref{sec3} is dedicated to the proofs of the main results, and in Section \ref{sec4}, we give some concluding remarks.

\subsection{Notation and terminology}\label{subsec1P3}

We denote by $\mathbb{N}$ the set of natural numbers (including $0$) and by $\mathbb{N}^+$ the set of positive integers. The image of a set $M$ under a function $f$ is denoted by $f[M]$, and the restriction of $f$ to $M$ by $f_{\mid M}$. The image of $f$, i.e., the image of the entire domain of $f$ under $f$, is denoted by $\im(f)$. If, for $i=1,\ldots,n$, $f_i$ is a function $X_i\rightarrow Y_i$, we denote by $f_1\times\cdots\times f_n$ the function $\prod_{i=1}^n{X_i}\rightarrow\prod_{i=1}^n{Y_i}$ mapping $(x_1,\ldots,x_n)\mapsto(f_1(x_1),\ldots,f_n(x_n))$.

For functions $f,g$ mapping from an unbounded set $D$ of non-negative real numbers into $\left[0,\infty\right)$, we use the Bachmann-Landau notation \enquote{$f(x)=\Theta(g(x))$ as $x\to\infty$}, meaning that there exist positive constants $C_1$ and $C_2$ such that for all $x\in D$, $C_1f(x)\leq g(x)\leq C_2f(x)$.

For $n\in\mathbb{N}$, we denote the symmetric group and alternating group on $\{1,\ldots,n\}$ by $\Sym_n$ and $\Alt_n$ respectively. For a group $G$ and an element $g\in G$, $\tau_g:G\rightarrow G,x\mapsto gxg^{-1}$, denotes the inner automorphism of $G$ with respect to $g$, and $\ord(g)$ denotes the order of $g$. The centralizer in $G$ of an element $g\in G$ is denoted by $\C_G(g)$, and the center of $G$ by $\zeta G$. We write $H\leq G$ for \enquote{$H$ is a subgroup of $G$} and $N\cha G$ for \enquote{$N$ is a characteristic subgroup of $G$}.

Extending the notation from the beginning of this paper, we set, for an automorphism $\alpha$ of a finite group $G$ and $e\in\mathbb{Z}$, $\P_e(\alpha):=\{g\in G\mid \alpha(g)=g^e\}$, $\L_e(\alpha):=|\P_e(\alpha)|$ and $\l_e(\alpha):=\frac{1}{|G|}{\L_e(\alpha)}$. Hence $\L_e(G)=\max_{\alpha\in\Aut(G)}{\L_e(\alpha)}$, and we also set $\l_e(G):=\frac{1}{|G|}{\L_e(G)}=\max_{\alpha\in\Aut(G)}{\l_e(\alpha)}$.

For a nonzero polynomial $P(X)$ over some field $K$, we denote by $\deg(P(X))$ the degree of $P(X)$ and by $\mindeg(P(X))$ the minimum degree of a nonzero monomial of $P(X)$.

The rest of our notation is either defined at some point in the text or standard.

\subsection{Finite semisimple groups}\label{subsec1P4}

For the readers' convenience, we now briefly recall the basic theory of finite groups with trivial solvable radical. We call such groups \textit{semisimple}, in accordance with the terminology of \cite[pp. 89ff.]{Rob96a}, where one can find most of the theory mentioned below in detail.

Let $H$ be a finite semisimple group. Then the socle of $H$, $\Soc(H)$, is a finite centerless completely reducible group, i.e., it can be written as follows: $\Soc(H)=S_1^{n_1}\times\cdots\times S_r^{n_r}$, where the $S_i$ are pairwise nonisomorphic nonabelian finite simple groups, the $n_i$ are positive integers, and $r\in\mathbb{N}$ (with $r=0$ if and only if $H$ is trivial). One can show that the conjugation action of $H$ on $\Soc(H)$ is faithful, yielding an embedding $H\hookrightarrow\Aut(\Soc(H))$ whose image contains $\Inn(\Soc(H))\cong\Soc(H)$.

Conversely, if $R$ is a finite centerless completely reducible group, and $H$ is such that $\Inn(R)\leq H\leq\Aut(R)$, then $H$ is semisimple with $\Soc(H)=\Inn(R)\cong R$. Hence the finite semisimple groups are just the groups occurring in between the inner and the full automorphism group of a finite centerless completely reducible group.

Fortunately for the study of finite semisimple groups, the structure of the automorphism groups of finite centerless completely reducible groups is known: We have $\Aut(S_1^{n_1}\times\cdots\times S_r^{n_r})=\Aut(S_1^{n_1})\times\cdots\times\Aut(S_r^{n_r})$, and for a nonabelian finite simple group $S$ and $n\in\mathbb{N}^+$, we have $\Aut(S^n)=\Aut(S)\wr\Sym_n$ (permutational wreath product).

Furthermore, it follows from a result of Rose (see \cite[Lemma 1.1]{Ros75a}) that for any finite semisimple group $H$, viewing $H$ as a subgroup of $\Aut(\Soc(H))$ via the embedding mentioned above, the automorphism group of $H$ is naturally isomorphic with the normalizer of $H$ in $\Aut(\Soc(H))$. In particular, the automorphism group of a finite centerless completely reducible group is always complete.

We remark that the results of the previous paragraph imply in particular that every automorphism of a finite semisimple group $H$ extends naturally to an (inner) automorphism of $\Aut(\Soc(H))$. In particular, if $S$ is a nonabelian finite simple group, then for all $e\in\mathbb{Z}$, $\L_e(S)\leq\L_e(\Aut(S))$, a fact used at several points in the paper.

\section{Auxiliary observations}\label{sec2}

\subsection{Fixed points and elements squared or cubed by a finite group automorphism}\label{subsec2P1}

In this subsection, we establish bounds on $\L_2$- and $\L_3$-values of finite group automorphisms that involve the number of fixed points of the automorphism. This is basically due to the appearance of functions of the following type in our arguments:

\begin{nottation}\label{transNot}
Let $G$ be a group, $\alpha$ an automorphism of $G$. We denote by $\T_{\alpha}$ the function $G\rightarrow G$ mapping $g\mapsto g^{-1}\alpha(g)$.
\end{nottation}

It is well-known that the fibers of $\T_{\alpha}$ are just the right cosets of $\fix(\alpha)$, the subgroup of $G$ consisting of the fixed points of $\alpha$.

We begin with the following general bound for $\L_e(\alpha)$, which is obtained by counting conjugacy-class-wise and is a generalization of \cite[Lemma 3.3]{Lie73a}:

\begin{lemmma}\label{lELem}
Let $G$ be a finite group, $\alpha$ an automorphism of $G$ and $e\in\mathbb{Z}$. Then $\L_e(\alpha)\leq\k(G)\cdot|\fix(\alpha)|$
\end{lemmma}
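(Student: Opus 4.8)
The plan is to bound $\L_e(\alpha)$ by partitioning $\P_e(\alpha)$ according to which conjugacy class of $G$ an element lies in, and then arguing that within each conjugacy class at most $|\fix(\alpha)|$ elements can satisfy $\alpha(g) = g^e$. Summing over the $\k(G)$ classes then yields the claimed inequality.

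First I would fix a conjugacy class $\Cc$ of $G$ and consider the set $\P_e(\alpha) \cap \Cc$. If this set is empty there is nothing to do, so pick some $g_0 \in \P_e(\alpha) \cap \Cc$, meaning $\alpha(g_0) = g_0^e$. Any other element of $\Cc$ has the form $x g_0 x^{-1}$ for some $x \in G$, and I want to understand when $x g_0 x^{-1}$ also lies in $\P_e(\alpha)$. The condition $\alpha(x g_0 x^{-1}) = (x g_0 x^{-1})^e$ rewrites, using that $\alpha$ is a homomorphism and that $(x g_0 x^{-1})^e = x g_0^e x^{-1}$, as $\alpha(x)\alpha(g_0)\alpha(x)^{-1} = x g_0^e x^{-1}$, i.e. $\alpha(x) g_0^e \alpha(x)^{-1} = x g_0^e x^{-1}$ since $\alpha(g_0) = g_0^e$. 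Setting $h := x^{-1}\alpha(x) = \T_{\alpha}(x)$, this says $h$ centralizes $g_0^e$; that is, $x g_0 x^{-1} \in \P_e(\alpha)$ if and only if $\T_\alpha(x) \in \C_G(g_0^e)$.

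Next I would count how many distinct conjugates $x g_0 x^{-1}$ arise as $x$ ranges over $\T_\alpha^{-1}[\C_G(g_0^e)]$. Two elements $x, y$ give the same conjugate exactly when $y^{-1}x \in \C_G(g_0)$, and since $\C_G(g_0) \leq \C_G(g_0^e)$, the map $x \mapsto x g_0 x^{-1}$ is constant on left cosets of $\C_G(g_0)$ inside $\T_\alpha^{-1}[\C_G(g_0^e)]$. Recalling the fact quoted just before the lemma, the fibers of $\T_\alpha$ are the right cosets of $\fix(\alpha)$, so $\T_\alpha^{-1}[\C_G(g_0^e)]$ is a union of $|\C_G(g_0^e) \cap \im(\T_\alpha)|$ right cosets of $\fix(\alpha)$, hence has size at most $|\C_G(g_0^e)| \cdot |\fix(\alpha)| / $ (something) — here I need to be a little careful. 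Cleaner: the number of elements of $\P_e(\alpha) \cap \Cc$ equals the number of left cosets $x\C_G(g_0)$ with $\T_\alpha(x) \in \C_G(g_0^e)$, and since each such coset determines $\T_\alpha(x)$ up to the set $\T_\alpha(x\C_G(g_0)) \subseteq \C_G(g_0^e)$ while distinct values of $\T_\alpha(x)$ in $\C_G(g_0^e)$ come from at most $|\fix(\alpha)|$ values of $x$ each... the tidiest route is to observe directly that $|\P_e(\alpha)\cap\Cc| \le |\C_G(g_0^e)|/|\C_G(g_0)| \cdot |\fix(\alpha)|$ is already more than we need, but in fact $\C_G(g_0^e) = \C_G(g_0)$ when $\langle g_0^e\rangle = \langle g_0\rangle$, which fails in general; so I would instead just bound $|\P_e(\alpha)\cap\Cc|$ by the number of right cosets of $\fix(\alpha)$ meeting $\T_\alpha[\{x : xg_0x^{-1}\in\P_e(\alpha)\}]$, and note this set of $\T_\alpha$-values, being a subset of a single coset of... hmm.

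Let me restate the cleanest version. Define $\phi: \Cc \to G$ by $\phi(x g_0 x^{-1}) := \T_\alpha(x) \bmod \C_G(g_0)$-ambiguity — to avoid well-definedness worries I would instead map each $c \in \P_e(\alpha)\cap\Cc$ to the element $c^{-1}\alpha(c) = \T_\alpha(c)$, i.e. just use $\T_\alpha$ restricted to $\P_e(\alpha)\cap\Cc$. The key claim is then that $\T_\alpha[\P_e(\alpha)\cap\Cc]$ lies in a single conjugacy class of $G$ — indeed if $c = xg_0x^{-1}$ then $\T_\alpha(c) = c^{-1}\alpha(c) = x g_0^{-1}x^{-1} \cdot x\alpha(x^{-1}) \alpha(g_0)\alpha(x) ... $; this is getting intricate. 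The honest assessment is that the main obstacle is precisely organizing this conjugacy-class-wise count so that the per-class bound comes out as exactly $|\fix(\alpha)|$ rather than a cruder multiple of it; the slick argument (generalizing Liebeck's Lemma 3.3) is to show $\T_\alpha$ maps $\P_e(\alpha) \cap \Cc$ injectively modulo $\fix(\alpha)$ into the set of those $t$ in one $\alpha$-twisted conjugacy class with a prescribed property, a set of size at most $|\fix(\alpha)| = |\fix(\alpha)|$. I would carry out that bookkeeping carefully, then sum over all $\k(G)$ conjugacy classes to conclude $\L_e(\alpha) = \sum_{\Cc} |\P_e(\alpha)\cap\Cc| \leq \k(G)\cdot|\fix(\alpha)|$.
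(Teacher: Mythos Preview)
Your approach is exactly the paper's: partition $\P_e(\alpha)$ by conjugacy classes, and for a class containing some $g_0\in\P_e(\alpha)$ reduce the condition $xg_0x^{-1}\in\P_e(\alpha)$ to $\T_\alpha(x)\in\C_G(g_0^e)$. You also correctly identify the obstacle: the bound you want is $|\fix(\alpha)|$, but the preimage $\T_\alpha^{-1}[\C_G(g_0^e)]$ has size at most $|\fix(\alpha)|\cdot|\C_G(g_0^e)|$, while each conjugate is counted $|\C_G(g_0)|$ times, so you need $\C_G(g_0^e)=\C_G(g_0)$.

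What you are missing is that this equality \emph{does} hold here, and this is the one idea that makes the argument go through. You write that $\C_G(g_0^e)=\C_G(g_0)$ requires $\langle g_0^e\rangle=\langle g_0\rangle$, ``which fails in general''. But it does not fail for $g_0\in\P_e(\alpha)$: since $\alpha$ is an automorphism, $\ord(g_0^e)=\ord(\alpha(g_0))=\ord(g_0)$, and hence $\gcd(e,\ord(g_0))=1$, so $\langle g_0^e\rangle=\langle g_0\rangle$ and $\C_G(g_0^e)=\C_G(g_0)$. With this observation in hand, the set of $x\in G$ with $xg_0x^{-1}\in\P_e(\alpha)$ is contained in $\T_\alpha^{-1}[\C_G(g_0)]$, a set of size at most $|\fix(\alpha)|\cdot|\C_G(g_0)|$; dividing by the redundancy $|\C_G(g_0)|$ gives $|\P_e(\alpha)\cap\Cc|\leq|\fix(\alpha)|$, and summing over the $\k(G)$ classes finishes the proof. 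All of your subsequent detours (applying $\T_\alpha$ to $\P_e(\alpha)\cap\Cc$ itself, looking for a twisted-conjugacy-class argument) are unnecessary once this coprimality is noticed.
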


\begin{proof}
It is sufficient to show that $\P_e(\alpha)$ contains at most $|\fix(\alpha)|$ many elements from each conjugacy class in $G$. Hence we fix $g\in\P_e(\alpha)$ and show that the number of conjugates $tgt^{-1}$, $t\in G$, that are also in $\P_e(\alpha)$ is at most $|\fix(\alpha)|$. This is equivalent to showing that the number of $t\in G$ such that $tgt^{-1}\in\P_e(\alpha)$ is at most $|\fix(\alpha)|\cdot|\C_G(g)|$. Now if $tgt^{-1}\in\P_e(\alpha)$, it follows that $tg^et^{-1}=\alpha(tgt^{-1})=\alpha(t)g^e\alpha(t)^{-1}$, or equivalently $t^{-1}\alpha(t)\in\C_G(g^e)$. Note that since $\alpha(g)=g^e$, $e$ and $\ord(g)$ must be coprime, and so $\C_G(g^e)=\C_G(g)$. Hence a necessary (and sufficient) condition for $tgt^{-1}\in\P_e(\alpha)$ to hold is that $t$ is in the preimage of $\C_G(g)$ under the function $\T_{\alpha}$. Since this preimage has size at most $|\C_G(g)|\cdot|\fix(\alpha)|$ by the fiber structure of $\T_{\alpha}$, we are done.
\end{proof}

Note that the upper bound on $\L_e(\alpha)$ from Lemma \ref{lELem} is good by trend if the number of fixed points of $\alpha$ is small. In the rest of this subsection, we derive other upper bounds on $\L_2$- and $\L_3$-values of finite group automorphisms, which have a tendency to be good if the number of fixed points is large. These bounds are obtained by counting $N$-coset-wise for a characteristic subgroup $N$; the following result is the basis for our argument:

\begin{propposition}\label{shiftProp}
Let $G$ be a group, $e\in\mathbb{N}^+$, $N\cha G$, and $\alpha,\beta_1,\ldots,\beta_e$ automorphisms of $G$. Set $\P_e(\alpha\mid\beta_1,\ldots,\beta_e):=\{g\in G\mid \alpha(g)=\beta_1(g)\cdots\beta_e(g)\}$. Fix $g\in\P_e(\alpha\mid\beta_1,\ldots,\beta_e)$. Then for $n\in N$, we have $ng\in\P_e(\alpha\mid\beta_1,\ldots,\beta_e)$ if and only if

\[
n\in\P_e(\alpha_{\mid N}\mid(\beta_1)_{\mid N},(\tau_{\beta_1(g)}\circ\beta_2)_{\mid N},(\tau_{\beta_1(g)}\circ\tau_{\beta_2(g)}\circ\beta_3)_{\mid N},\ldots,(\tau_{\beta_1(g)}\circ\cdots\circ\tau_{\beta_{e-1}(g)}\circ\beta_e)_{\mid N}).
\]
\end{propposition}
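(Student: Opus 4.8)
The plan is to prove this equivalence of conditions by a direct computation: unwind what it means for $ng$ to lie in $\P_e(\alpha\mid\beta_1,\ldots,\beta_e)$, rearrange the resulting product equation so that the part depending only on $g$ becomes a factor that cancels, and read off that what remains is exactly the asserted condition on $n$. Concretely, I would write $b_i:=\beta_i(g)$ for $i=1,\ldots,e$, so that $b_1\cdots b_e=\alpha(g)$ by the hypothesis $g\in\P_e(\alpha\mid\beta_1,\ldots,\beta_e)$, and for $n\in N$ write $m_i:=\beta_i(n)$, which lies in $N$ since $N\cha G$. Using that $\alpha$ and the $\beta_i$ are homomorphisms, the condition $ng\in\P_e(\alpha\mid\beta_1,\ldots,\beta_e)$ becomes
\[
\alpha(n)\alpha(g)=m_1 b_1 m_2 b_2\cdots m_e b_e .
\]

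The key step is then the rearrangement identity
\[
m_1 b_1 m_2 b_2\cdots m_e b_e=\Bigl(\prod_{j=1}^{e}(\tau_{b_1}\circ\cdots\circ\tau_{b_{j-1}})(m_j)\Bigr)\,b_1 b_2\cdots b_e ,
\]
where the $j=1$ factor is understood to be $m_1$ (empty composite) and all products are taken in increasing order of the index. This follows by an easy induction on $e$: to move the block $b_1\cdots b_{j-1}$ rightward past the next factor $m_j$ one applies $xy=(xyx^{-1})x$ repeatedly, obtaining $(b_1\cdots b_{j-1})m_j=\bigl((\tau_{b_1}\circ\cdots\circ\tau_{b_{j-1}})(m_j)\bigr)(b_1\cdots b_{j-1})$. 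Since $N\trianglelefteq G$, each correction factor $(\tau_{b_1}\circ\cdots\circ\tau_{b_{j-1}})(m_j)$ again lies in $N$, so the entire parenthesised product on the right lies in $N$.

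Substituting $b_1\cdots b_e=\alpha(g)$ and cancelling $\alpha(g)$ from the right of both sides of the displayed equation, the condition $ng\in\P_e(\alpha\mid\beta_1,\ldots,\beta_e)$ becomes
\[
\alpha(n)=\prod_{j=1}^{e}\bigl(\tau_{\beta_1(g)}\circ\cdots\circ\tau_{\beta_{j-1}(g)}\circ\beta_j\bigr)(n) .
\]
To conclude, I would observe that $N\cha G$ forces $\alpha$ and each $\beta_j$ to restrict to automorphisms of $N$, while $N\trianglelefteq G$ forces each $\tau_{\beta_i(g)}$ to restrict to an automorphism of $N$; hence every composite $\tau_{\beta_1(g)}\circ\cdots\circ\tau_{\beta_{j-1}(g)}\circ\beta_j$ restricts to an automorphism of $N$, and the last display is by definition precisely the statement that $n\in\P_e(\alpha_{\mid N}\mid(\beta_1)_{\mid N},(\tau_{\beta_1(g)}\circ\beta_2)_{\mid N},\ldots,(\tau_{\beta_1(g)}\circ\cdots\circ\tau_{\beta_{e-1}(g)}\circ\beta_e)_{\mid N})$, as claimed. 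Every implication used is an equivalence, so the biconditional follows.

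There is no genuine obstacle here; the proposition is essentially a bookkeeping lemma whose only content is the rearrangement identity above. The points that require care are getting the nested conjugations in that identity in the correct order, and checking at each stage both that the correction terms stay inside $N$ and that the composite maps really do restrict to automorphisms of $N$ — each of which uses that $N$ is normal (indeed characteristic) in $G$ rather than merely a subgroup.
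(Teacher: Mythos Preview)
Your proof is correct and follows essentially the same route as the paper's: both expand $\alpha(ng)=\beta_1(ng)\cdots\beta_e(ng)$ as an alternating product $\beta_1(n)\beta_1(g)\cdots\beta_e(n)\beta_e(g)$, then rearrange via iterated conjugation to peel off the factor $\beta_1(g)\cdots\beta_e(g)=\alpha(g)$ and read off the condition on $n$. Your write-up is slightly more explicit about the rearrangement identity and about why the composites restrict to automorphisms of $N$, but the argument is the same.
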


\begin{proof}
Under the assumptions, we have that $ng\in\P_e(\alpha\mid\beta_1,\ldots,\beta_e)$ if and only if (expressing $\alpha(ng)$ in two different ways)

\[
\beta_1(n)\beta_1(g)\beta_2(n)\beta_2(g)\cdots\beta_e(n)\beta_e(g)=\alpha(n)\beta_1(g)\beta_2(g)\cdots\beta_e(g),
\]

which is equivalent to

\begin{align*}
\alpha(n) &=\beta_1(n)\beta_1(g)\beta_2(n)\beta_2(g)\cdots\beta_{e-1}(n)\beta_{e-1}(g)\beta_e(n)\beta_{e-1}(g)^{-1}\cdots\beta_1(g)^{-1} \\ &=\beta_1(n)\cdot(\tau_{\beta_1(g)}\circ\beta_2)(n)\cdots(\tau_{\beta_1(g)}\circ\cdots\circ\tau_{\beta_{e-1}(g)}\circ\beta_e)(n).
\end{align*}
\end{proof}

Proposition \ref{shiftProp} is more general than what we need at the moment for our counting argument, but we will require the result in its full generality later in the proof of Theorem \ref{mainTheo}(2,3) (Subsections \ref{subsec3P2} and \ref{subsec3P3}). The special case of Proposition \ref{shiftProp} which we need now can be formulated more concisely using the following notation from \cite[Definition 2.1.4]{Bor15b}:

\begin{nottation}\label{shiftNot}
For a group $G$, an automorphism $\alpha$ of $G$ and $e\in\mathbb{N}$, we define a function $\sh_{\alpha}^{(e)}:G\rightarrow G$ via $\sh_{\alpha}^{(e)}(g):=g\alpha(g)\alpha^2(g)\cdots\alpha^{e-1}(g)$ (called the \textbf{$e$-th shift of $g$ under $\alpha$}).
\end{nottation}

\begin{corrollary}\label{shiftCor}
Let $G$ be a group, $e\in\mathbb{N}^+$, $N\cha G$ and $\alpha$ an automorphism of $G$. Fix $g\in\P_e(\alpha)$. Then for $n\in N$, we have $ng\in\P_e(\alpha)$ if and only if $\alpha(n)=\sh_{\tau_g}^{(e)}(n)$.
\end{corrollary}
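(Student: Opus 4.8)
The plan is to specialize Proposition \ref{shiftProp} to the situation $\beta_1=\beta_2=\cdots=\beta_e=\id_G$. Under this choice, $\P_e(\alpha\mid\id_G,\ldots,\id_G)$ becomes exactly $\{g\in G\mid\alpha(g)=g^e\}=\P_e(\alpha)$, so the hypothesis $g\in\P_e(\alpha\mid\beta_1,\ldots,\beta_e)$ in the Proposition is precisely our hypothesis $g\in\P_e(\alpha)$.

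Next I would unwind the right-hand side of the equivalence in Proposition \ref{shiftProp} for these particular $\beta_i$. Since each $\beta_i=\id_G$, we have $\beta_i(g)=g$ for every $i$, so $\tau_{\beta_1(g)}\circ\cdots\circ\tau_{\beta_{i-1}(g)}\circ\beta_i = \tau_g\circ\cdots\circ\tau_g = \tau_g^{i-1} = \tau_{g^{i-1}}$, using that $g\mapsto\tau_g$ is a homomorphism. Thus the $i$-th entry in the tuple of automorphisms (restricted to $N$) is $(\tau_{g^{i-1}})_{\mid N}$, i.e. the tuple is $\bigl((\tau_{g^0})_{\mid N},(\tau_{g^1})_{\mid N},\ldots,(\tau_{g^{e-1}})_{\mid N}\bigr)$. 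Consequently the condition $ng\in\P_e(\alpha)$ becomes $\alpha_{\mid N}(n)=\tau_{g^0}(n)\,\tau_{g^1}(n)\cdots\tau_{g^{e-1}}(n) = n\,(\tau_g)(n)\,(\tau_g)^2(n)\cdots(\tau_g)^{e-1}(n)$, where I also use $\tau_{g^{i-1}}=(\tau_g)^{i-1}$. By Notation \ref{shiftNot} this last product is exactly $\sh_{\tau_g}^{(e)}(n)$, so the condition reads $\alpha(n)=\sh_{\tau_g}^{(e)}(n)$, which is the claim. (Note $\alpha_{\mid N}(n)=\alpha(n)$ since $n\in N$.)

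Essentially this is a bookkeeping argument: the only genuine content is recognizing that the successively composed conjugations $\tau_{\beta_1(g)}\circ\cdots\circ\tau_{\beta_{i-1}(g)}$ collapse to $\tau_{g^{i-1}}=(\tau_g)^{i-1}$ once all $\beta_i$ are trivial, and then matching the resulting product with the definition of the shift function. I expect the only mild subtlety — and the step I would state most carefully — to be the identification of the $i$-th term of the tuple with $(\tau_g)^{i-1}$ restricted to $N$, since one must check that conjugation by $g$ indeed preserves $N$ (which holds because $N\cha G$, hence $N$ is in particular normal, so $\tau_g$ restricts to an automorphism of $N$) and that the indexing in Proposition \ref{shiftProp} lines up with the indexing $0,1,\ldots,e-1$ in Notation \ref{shiftNot}. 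Everything else is a direct substitution, so the corollary follows immediately.
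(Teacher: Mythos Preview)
Your argument is correct and is exactly the approach the paper takes: the paper's entire proof is the single sentence ``Set $\beta_i:=\id$ for $i=1,\ldots,e$ in Proposition \ref{shiftProp}'', and you have simply spelled out the verification that under this specialization the tuple of automorphisms becomes $(\id_{\mid N},(\tau_g)_{\mid N},\ldots,(\tau_g^{e-1})_{\mid N})$ and hence the condition collapses to $\alpha(n)=\sh_{\tau_g}^{(e)}(n)$. There is nothing to add or correct.
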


\begin{proof}
Set $\beta_i:=\id$ for $i=1,\ldots,e$ in Proposition \ref{shiftProp}.
\end{proof}

We can use Corollary \ref{shiftCor} to prove the following upper bound on $\L_2$-values of finite group automorphisms:

\begin{lemmma}\label{lTwoLem}
Let $G$ be a finite group, $N\cha G$, let $\alpha$ be an automorphism of $G$, and denote by $\tilde{\alpha}$ the induced automorphism of $G/N$. Then $\L_2(\alpha)\leq[N:\fix(\alpha_{\mid N})]\cdot\L_2(\tilde{\alpha})$, or equivalently, $\l_2(\alpha)\leq|\fix(\alpha_{\mid N})|^{-1}\cdot\l_2(\tilde{\alpha})$.
\end{lemmma}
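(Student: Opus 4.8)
The plan is to count the elements of $\P_2(\alpha)$ one $N$-coset at a time, using Corollary \ref{shiftCor} to describe the intersection of $\P_2(\alpha)$ with each coset and a short commutator-style cancellation to bound the size of that intersection. Throughout I will use that a characteristic subgroup is normal, so the left and right cosets of $N$ in $G$ coincide and $\tau_g$ restricts to an automorphism of $N$ for every $g\in G$.

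First I would fix a coset $Ng$ of $N$ in $G$. If $Ng\cap\P_2(\alpha)=\emptyset$, this coset contributes nothing. Otherwise I choose some $g$ in $Ng\cap\P_2(\alpha)$; then every element of $Ng$ is uniquely of the form $ng$ with $n\in N$, and by Corollary \ref{shiftCor} applied with $e=2$, such an element lies in $\P_2(\alpha)$ if and only if $\alpha(n)=\sh_{\tau_g}^{(2)}(n)=n\cdot\tau_g(n)$. Since $N$ is characteristic, $a:=\alpha_{\mid N}$ and $b:=(\tau_g)_{\mid N}$ are automorphisms of $N$, so this is a condition purely inside $N$: setting $X_g:=\{n\in N\mid a(n)=n\,b(n)\}$, we get a bijection between $Ng\cap\P_2(\alpha)$ and $X_g$, hence $|Ng\cap\P_2(\alpha)|=|X_g|$.

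The crux of the argument is the bound $|X_g|\leq[N:\fix(\alpha_{\mid N})]$. I would prove it by showing that $X_g$ meets each left coset of $F:=\fix(\alpha_{\mid N})=\fix(a)$ in $N$ in at most one point: if $n\in X_g$ and $f\in F$, then the requirement $fn\in X_g$ reads $a(fn)=(fn)\,b(fn)$; expanding the left side as $a(f)a(n)=f\,a(n)$, cancelling $f$, and then using $a(n)=n\,b(n)$ (which holds because $n\in X_g$) to cancel $n$ and $b(n)$, one is left with $b(f)=1$, whence $f=1$ since $b$ is injective. Thus $Fn\cap X_g=\{n\}$ for every $n\in X_g$, and since there are $[N:F]$ left cosets of $F$ in $N$, we obtain $|X_g|\leq[N:\fix(\alpha_{\mid N})]$ for each contributing coset. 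This cancellation is really the only nontrivial point; everything else is bookkeeping.

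Finally I would note that every coset $Ng$ meeting $\P_2(\alpha)$ projects to an element of $\P_2(\tilde\alpha)$: if $ng\in\P_2(\alpha)$ then, passing to $G/N$, $\tilde\alpha(gN)=(ng)^2N=(gN)^2$. Hence there are at most $\L_2(\tilde\alpha)$ contributing cosets. Summing the per-coset bound over them gives $\L_2(\alpha)\leq[N:\fix(\alpha_{\mid N})]\cdot\L_2(\tilde\alpha)$, and dividing through by $|G|=|N|\cdot|G/N|$ turns this into the equivalent inequality $\l_2(\alpha)\leq|\fix(\alpha_{\mid N})|^{-1}\cdot\l_2(\tilde\alpha)$.
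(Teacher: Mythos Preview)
Your proof is correct and follows essentially the same strategy as the paper's: count $N$-coset-wise, use Corollary \ref{shiftCor} to reduce to the condition $\alpha(n)=n\,\tau_g(n)$ on $N$, and show this set meets each coset of $\fix(\alpha_{\mid N})$ in at most one point. The only cosmetic difference is that the paper phrases the last step as ``$\T_{\alpha}(n)=\tau_g(n)$, and since $\tau_g$ is bijective this can hold for at most one $n$ per fiber of $\T_{\alpha}$'', whereas you unpack the same cancellation by hand; these are the same argument.
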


Observe that this implies that an automorphism of a finite group $G$ with at least $m$ fixed points can only square at most $\frac{1}{m}|G|$ elements of $G$.

\begin{proof}[Proof of Lemma \ref{lTwoLem}]
First, observe that if $g\in\P_2(\alpha)$, then $\pi(g)\in\P_2(\tilde{\alpha})$, where $\pi:G\rightarrow G/N$ is the canonical projection. Hence $\P_2(\alpha)$ can only contain elements from the $\L_2(\tilde{\alpha})$ many cosets of $N$ in $G$ corresponding to elements of $G/N$ that are squared by $\tilde{\alpha}$. Let us now bound the size of the intersection of $\P_2(\alpha)$ with an arbitrary coset $C$ of $N$. Assume that the intersection is nonempty, say containing $g$. Then upon setting $e:=2$ in Corollary \ref{shiftCor}, we find that the elements of $C=Ng$ squared by $\alpha$ are in bijective correspondence with the $n\in N$ such that $\alpha(n)=\sh_{\tau_g}^{(2)}(n)=n\tau_g(n)$, or equivalently $\T_{\alpha}(n)=\tau_g(n)$. Since $\tau_g$ is bijective, by the fiber structure of $\T_{\alpha}$, this equality can only hold for at most one $n$ from each right coset of $\fix(\alpha_{\mid N})$. Hence the total number of such $n$ is bounded from above by the number of such cosets, i.e., by $[N:\fix(\alpha_{\mid N})]$, which proves the assertion.
\end{proof}

We can also derive a similar bound for $\L_3$, but to this end, we need the following auxiliary observation:

\begin{propposition}\label{shiftTwoProp}
Let $G$ be a group, $\alpha$ an automorphism of $G$, and fix $c\in G$. Consider the map $\f_{c,\alpha}:G\rightarrow G,g\mapsto gc\alpha(g)$. Then for $g_1,g_2\in G$, we have $\f_{c,\alpha}(g_1)=\f_{c,\alpha}(g_2)$ if and only if $g_2\in\P_{-1}(\tau_{g_1}\circ\tau_c\circ\alpha)g_1$. In other words, the fibers of $\f_{c,\alpha}$ are just the subsets of $G$ of the form $\P_{-1}(\tau_g\circ\tau_c\circ\alpha)g$, $g\in G$.
\end{propposition}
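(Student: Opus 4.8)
The plan is to prove this by a direct computation in $G$, parametrising the second point $g_2$ relative to the first one via $g_2=hg_1$ with $h\in G$, and then rewriting the fibre equation $\f_{c,\alpha}(g_1)=\f_{c,\alpha}(g_2)$ as an equation in $h$ alone.

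First I would expand, using that $\alpha$ is a homomorphism, $\f_{c,\alpha}(g_2)=\f_{c,\alpha}(hg_1)=hg_1c\,\alpha(h)\alpha(g_1)$, compare this with $\f_{c,\alpha}(g_1)=g_1c\,\alpha(g_1)$, and cancel the common rightmost factor $\alpha(g_1)$. This turns $\f_{c,\alpha}(g_1)=\f_{c,\alpha}(g_2)$ into the equivalent equation $hg_1c\,\alpha(h)=g_1c$, that is, $g_1c\,\alpha(h)=h^{-1}g_1c$. From the latter we obtain $g_1c\,\alpha(h)c^{-1}g_1^{-1}=h^{-1}$, and the left-hand side is by definition exactly $(\tau_{g_1}\circ\tau_c\circ\alpha)(h)$. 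Hence the fibre equation is equivalent to $(\tau_{g_1}\circ\tau_c\circ\alpha)(h)=h^{-1}$, that is, to $h\in\P_{-1}(\tau_{g_1}\circ\tau_c\circ\alpha)$, which is precisely the asserted condition $g_2=hg_1\in\P_{-1}(\tau_{g_1}\circ\tau_c\circ\alpha)g_1$. Every manipulation above is reversible, so the equivalence holds in both directions.

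For the reformulation in terms of fibres, I would simply observe that $g_1$ lies in its own fibre and, consistently, $1\in\P_{-1}(\tau_{g_1}\circ\tau_c\circ\alpha)$ since $(\tau_{g_1}\circ\tau_c\circ\alpha)(1)=1=1^{-1}$; combined with the equivalence just established (applied with $g_1:=g$ for an arbitrary $g\in G$), this shows that the fibre of $\f_{c,\alpha}$ through any $g\in G$ is exactly $\P_{-1}(\tau_g\circ\tau_c\circ\alpha)g$, and since these fibres cover $G$, they are precisely the subsets of the stated form. I do not expect any real obstacle here; the only point requiring care is the bookkeeping of left versus right multiplication and the order in which the conjugations $\tau_{g_1},\tau_c$ and the automorphism $\alpha$ are composed, so that the conjugating elements end up on the correct side.
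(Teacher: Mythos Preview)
Your proof is correct and follows essentially the same approach as the paper: both write $g_2=hg_1$ (the paper calls this $y$), expand $\f_{c,\alpha}(hg_1)$, cancel, and rewrite the resulting condition as $(\tau_{g_1}\circ\tau_c\circ\alpha)(h)=h^{-1}$. The only cosmetic difference is that the paper simultaneously introduces the right-translation parameter $x$ with $g_2=g_1x$ to cancel on both ends at once, whereas you cancel only $\alpha(g_1)$ on the right; your version is in fact slightly more streamlined.
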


\begin{proof}
Write $g_2=g_1x=yg_1$ with $x,y\in G$ (so that $x=\tau_{g_1^{-1}}(y)$). Then

\begin{align*}
\f_{c,\alpha}(g_1)=\f_{c,\alpha}(g_2) &\Leftrightarrow g_1c\alpha(g_1)=g_2c\alpha(g_2)=g_1xc\alpha(y)\alpha(g_1) \\ &\Leftrightarrow \alpha(y)=c^{-1}x^{-1}c=(\tau_{c^{-1}}\circ\tau_{g_1^{-1}})(y^{-1}) \\ &\Leftrightarrow (\tau_{g_1}\circ\tau_c\circ\alpha)(y)=y^{-1} \Leftrightarrow y\in\P_{-1}(\tau_{g_1}\circ\tau_c\circ\alpha).
\end{align*}
\end{proof}

\begin{lemmma}\label{lThreeLem}
Let $G$ be a finite group, $N\cha G$, let $\alpha$ be an automorphism of $G$ and denote by $\tilde{\alpha}$ the induced automorphism of $G/N$. Then $\L_3(\alpha)\leq[N:\fix(\alpha_{\mid N})]\cdot\L_{-1}(N)\cdot\L_3(\tilde{\alpha})$, or equivalently $\l_3(\alpha)\leq\frac{\L_{-1}(N)}{\fix(\alpha_{\mid N})}\cdot\l_3(\tilde{\alpha})$.
\end{lemmma}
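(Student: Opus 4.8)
The plan is to mimic the proof of Lemma \ref{lTwoLem}, but to handle the extra factor coming from the cubing map via Proposition \ref{shiftTwoProp}. First I would observe that, exactly as before, if $g\in\P_3(\alpha)$ then $\pi(g)\in\P_3(\tilde\alpha)$ for the canonical projection $\pi:G\to G/N$, so $\P_3(\alpha)$ is contained in the union of the $\L_3(\tilde\alpha)$ cosets of $N$ that project into $\P_3(\tilde\alpha)$. It therefore suffices to bound, for a fixed coset $C=Ng$ with $g\in\P_3(\alpha)$, the size of $\P_3(\alpha)\cap C$ by $[N:\fix(\alpha_{\mid N})]\cdot\L_{-1}(N)$.

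Next I would apply Corollary \ref{shiftCor} with $e:=3$: the elements of $Ng$ cubed by $\alpha$ correspond bijectively to the $n\in N$ with $\alpha(n)=\sh_{\tau_g}^{(3)}(n)=n\,\tau_g(n)\,\tau_g^2(n)=n\,\tau_g(n)\,\tau_{g^2}(n)$. Rewriting this, the condition becomes $n^{-1}\alpha(n)=\tau_g(n)\,\tau_{g^2}(n)$, i.e. $\T_\alpha(n)=\tau_g(n)\,\tau_{g^2}(n)$. The key move is to recognize the right-hand side as a value of the map $\f_{c,\alpha'}$ from Proposition \ref{shiftTwoProp}, applied inside $N$: with $\alpha':=\tau_{g^2}{}_{\mid N}$ and $c:=1$ (or more precisely after conjugating to put it in the form $m\mapsto m\,c\,\alpha'(m)$), so that $n\mapsto\tau_g(n)\tau_{g^2}(n)$ has fibers of the shape $\P_{-1}(\beta)n_0$ for suitable automorphisms $\beta$ of $N$ of the form $\tau_w\circ(\text{fixed automorphism of }N)$, each of size at most $\L_{-1}(N)$.

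Then I would combine the two fibering structures: the $n\in N$ satisfying $\T_\alpha(n)=\f(n)$ (with $\f$ the map above) split according to the right cosets of $\fix(\alpha_{\mid N})$ — there are $[N:\fix(\alpha_{\mid N})]$ of them — and on each such coset $\T_\alpha$ is injective, so on each coset the solution set injects into a single fiber of $\f$, which has size at most $\L_{-1}(N)$. Hence the total number of solutions $n$ is at most $[N:\fix(\alpha_{\mid N})]\cdot\L_{-1}(N)$, giving $|\P_3(\alpha)\cap C|\le[N:\fix(\alpha_{\mid N})]\cdot\L_{-1}(N)$; summing over the at most $\L_3(\tilde\alpha)$ relevant cosets yields the claimed bound, and dividing by $|G|$ gives the $\l_3$ form.

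The main obstacle will be bookkeeping the conjugation twists so that the equation $\T_\alpha(n)=\tau_g(n)\tau_{g^2}(n)$ is genuinely of the form $\f_{c,\alpha'}$ on $N$: one must check that $\tau_g$ and $\tau_{g^2}$ restrict to automorphisms of $N$ (this uses $N\cha G$), rewrite the product $\tau_g(n)\tau_{g^2}(n)$ as $\tau_g(n)\cdot c\cdot\tilde\alpha'(n)$ for an appropriate constant $c\in N$ and automorphism $\tilde\alpha'$ of $N$ after pulling the first factor inside via $n\mapsto\tau_{g^{-1}}$ of everything, and then invoke Proposition \ref{shiftTwoProp} to conclude that the fibers have the form $\P_{-1}(\tau_w\circ\tau_c\circ\tilde\alpha')w$, each of cardinality $\le\L_{-1}(N)$ since $\tau_w\circ\tau_c\circ\tilde\alpha'\in\Aut(N)$. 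Everything else is a direct transcription of the proof of Lemma \ref{lTwoLem}.
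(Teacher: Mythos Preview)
Your approach is essentially the paper's: reduce to one coset via Corollary~\ref{shiftCor}, rewrite the condition as $(\tau_{g^{-1}}\circ\T_\alpha)(n)=\f_{1,(\tau_g)_{\mid N}}(n)$ (this is the clean bookkeeping you were worried about --- just apply $\tau_{g^{-1}}$ to both sides, and no nontrivial constant $c$ is needed), then combine the fiber structure of $\T_\alpha$ with Proposition~\ref{shiftTwoProp}. One slip to fix: $\T_\alpha$ is \emph{constant}, not injective, on each right coset of $\fix(\alpha_{\mid N})$, and it is precisely this constancy that forces the solutions within a fixed coset to lie in a single $\f$-fiber of size at most $\L_{-1}(N)$; the paper phrases the same step via pigeonhole (some coset carries at least $|K|/[N:\fix(\alpha_{\mid N})]$ solutions, all in one fiber) rather than summing coset by coset, but the resulting bound is identical.
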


\begin{proof}
Counting coset-wise just as in the proof of Lemma \ref{lTwoLem}, we see that it suffices to show that for all cosets $C$ of $N$ in $G$, we have $|C\cap\P_3(\alpha)|\leq[N:\fix(\alpha_{\mid N})]\cdot\L_{-1}(N)$. To this end, assume that $C=Ng$ with $g\in\P_3(\alpha)$. Setting $e:=3$ in Corollary \ref{shiftCor}, we find that the elements of $C$ that are also in $\P_3(\alpha)$ are in bijective correspondence with the $n\in N$ such that $\alpha(n)=\sh_{\tau_g}^{(3)}(n)=n\tau_g(n)\tau_g^2(n)$, or equivalently (using the notation from Proposition \ref{shiftTwoProp}):

\begin{equation}\label{nEq}
(\tau_{g^{-1}}\circ\T_{\alpha})(n)=\sh_{\tau_g}^{(2)}(n)=\f_{1,\tau_g}(n).
\end{equation}

Denote by $K$ the set of $n\in N$ such that Equation (\ref{nEq}) holds, and note that at least one of the $[N:\fix(\alpha_{\mid N})]$ many right cosets of $\fix(\alpha_{\mid N})$ in $N$ must contain at least $\frac{|K|}{[N:\fix(\alpha_{\mid N})]}$ many elements of $K$. Let $D$ be such a coset, and fix $n\in D\cap K$. Then for all $m\in D\cap K$, we have $\T_{\alpha}(n)=\T_{\alpha}(m)$, and hence by Equation (\ref{nEq}), $\f_{1,\tau_g}(n)=\f_{1,\tau_g}(m)$. It now follows by Proposition \ref{shiftTwoProp} that $\L_{-1}(N)\geq\L_{-1}((\tau_n\circ\tau_g)_{\mid N})\geq\frac{|K|}{[N:\fix(\alpha_{\mid N})]}$, which concludes the proof.
\end{proof}

\subsection{Character degree sums and elements inverted by an inner automorphism}\label{subsec2P2}

In this subsection, we show how to deduce nontrivial upper bounds on the maximum number of elements that an \textit{inner} automorphism of a finite group $G$ can invert; of course, for \textit{complete} $G$, these turn into upper bounds on $\L_{-1}(G)$. All this is basically just a series of applications of known results. The key role is played by Proposition \ref{sqrtProp} below.

\begin{nottation}\label{sqrtNot}
We introduce the following notation:

\begin{enumerate}
\item For a group $G$ and an element $g\in G$, set $\sqrt{g}:=\{f\in G\mid f^2=g\}\subseteq G$.

\item For a finite group $G$, set $\maxsqrt(G):=\max_{g\in G}{|\sqrt{g}|}$, the maximum number of square roots in $G$ of an element of $G$.
\end{enumerate}
\end{nottation}

\begin{propposition}\label{sqrtProp}
Let $G$ be a finite group.

\begin{enumerate}
\item For any $g\in G$, we have $\P_{-1}(\tau_g)=\sqrt{g^{-2}}\cdot g=\{rg\mid r\in G,r^2=g^{-2}\}$.

\item The maximum number of elements of $G$ inverted by an \textbf{inner} automorphism of $G$ is equal to $\maxsqrt(G)$.

\item If $G$ is complete, then $\L_{-1}(G)=\maxsqrt(G)$.
\end{enumerate}
\end{propposition}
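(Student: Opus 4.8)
The plan is to prove the three assertions in order, deriving (2) and (3) from (1). For (1), fix $g\in G$ and recall that the inner automorphism $\tau_g$ sends $x\mapsto gxg^{-1}$. I would compute directly: $\tau_g(x)=x^{-1}$ is equivalent to $gxg^{-1}=x^{-1}$, i.e. $x g^{-1} = g^{-1}x^{-1}$... more cleanly, write $x=rg$; then $\tau_g(rg)=(rg)^{-1}=g^{-1}r^{-1}$ becomes $g(rg)g^{-1}=g^{-1}r^{-1}$, i.e. $grg^{-1}\cdot g g^{-1}=gr$, hmm — let me instead substitute $x=rg$ into $\tau_g(x)=x^{-1}$: the left side is $g(rg)g^{-1}=grgg^{-1}=gr$, wait that needs care, $g(rg)g^{-1}=gr(gg^{-1})=gr$ only if $g$ commutes appropriately; in general $g(rg)g^{-1}=(gr g^{-1})(g g g^{-1})$ is not simpler. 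The honest computation: $\tau_g(x)=x^{-1}\iff gxg^{-1}=x^{-1}\iff x g^{-1}x = g^{-1}$ (multiplying by $x$ on the right... no). The robust route is: $gxg^{-1}=x^{-1}\iff x g^{-1} = g^{-1} x^{-1}$ is false too; instead $gxg^{-1}=x^{-1}\iff xgxg^{-1}=\text{id}\cdot$ — actually $gxg^{-1}=x^{-1}\iff gxgx^{-1}$... Let me just do it: multiply $gxg^{-1}=x^{-1}$ on the left by $x$ and on the right by $g$: $xgx=\text{id}$? That gives $xgxg^{-1}\cdot g = x\cdot x^{-1}\cdot g = g$, so $xgx = g$, hence $(xg)^2 = xgxg = g\cdot g\cdot$ — no: $(xg)^2=xgxg$, and we have $xgx=g$, so $(xg)^2 = (xgx)g = g\cdot g = g^2$? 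That needs $xgx=g$ meaning $xgxg = g\cdot g$, yes $(xg)^2=g^2$. Setting $r:=xg$... I want $x=rg$ form, so write $x = rg$ with $r := xg^{-1}$; then the condition is $(gr)^2 = ?$. I'll sort out the exact bookkeeping in writing, but the upshot I am aiming at is $\P_{-1}(\tau_g)=\{rg : r^2=g^{-2}\}=\sqrt{g^{-2}}\cdot g$, and the map $r\mapsto rg$ is a bijection from $\sqrt{g^{-2}}$ onto $\P_{-1}(\tau_g)$, so $\L_{-1}(\tau_g)=|\sqrt{g^{-2}}|$.

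For (2), note that every inner automorphism is $\tau_g$ for some $g\in G$, and conversely; so by (1) the maximum of $\L_{-1}(\tau_g)$ over inner automorphisms equals $\max_{g\in G}|\sqrt{g^{-2}}|$. Since $g\mapsto g^{-2}$ ranges over a subset of $G$, this is at most $\maxsqrt(G)$; and conversely, given $h\in G$ with $|\sqrt{h}|=\maxsqrt(G)$, I would like $h=g^{-2}$ for some $g$ — this holds whenever $\sqrt{h}\neq\emptyset$, since then $h=f^2$ for some $f$ and we may take $g=f^{-1}$, giving $g^{-2}=f^2=h$. If $\sqrt{h}=\emptyset$ then $|\sqrt{h}|=0$ and it contributes nothing to the maximum, so the maximum of $|\sqrt{g^{-2}}|$ over $g$ equals the maximum of $|\sqrt h|$ over $h$ with $\sqrt h\neq\emptyset$, which equals $\maxsqrt(G)$ (the maximum is attained at some perfect square, e.g. at $h=\text{id}$ if nothing else, and $|\sqrt{\text{id}}|\geq 1$). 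This settles (2).

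For (3), if $G$ is complete then $\Inn(G)=\Aut(G)$, so $\L_{-1}(G)=\max_{\alpha\in\Aut(G)}\L_{-1}(\alpha)=\max_{g\in G}\L_{-1}(\tau_g)$, which equals $\maxsqrt(G)$ by (2). The only genuinely delicate point is the algebraic identity in (1) — getting the conjugation manipulation exactly right and confirming that $r\mapsto rg$ is the correct bijection — but this is a short, routine group computation with no real obstacle; parts (2) and (3) are then immediate formal consequences.
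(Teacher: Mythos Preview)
Your overall structure---prove (1) directly, then derive (2) and (3) from it---is exactly what the paper does, and your arguments for (2) and (3) are correct (in particular your observation that an element $h$ maximizing $|\sqrt{h}|$ necessarily has a square root, so $h=g^{-2}$ for some $g$). The only difference is that for (1) the paper simply cites \cite[Proposition~2.22]{OS15a}, whereas you compute the identity by hand. Your plan for (1) is a bit tangled, but the correct thread is there: from $gxg^{-1}=x^{-1}$ one gets $xgx=g$; substituting $x=rg$ yields $rg^2rg=g$, i.e.\ $rg^2r=1$, which is equivalent to $r^2=g^{-2}$ (and the converse is the same computation read backwards). So once you write it out cleanly your proof of (1) is a short self-contained argument in place of the paper's citation, and the rest matches.
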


\begin{proof}
Clearly, (2) follows from (1), and (3) follows from (2). Hence it suffices to prove (1), which follows from \cite[Proposition 2.22, equivalence of (i) and (iii)]{OS15a}.
\end{proof}

Proposition \ref{sqrtProp}(3) allows us to establish a connection to character theory, due to the following classical result:

\begin{theoremm}\label{characterTheo}
For a finite group $G$ and an irreducible $\mathbb{C}$-character $\chi$ of $G$, denote the Frobenius-Schur indicator of $\chi$ by $\nu_2(\chi)$. Then for an element $g\in G$, we have $|\sqrt{g}|=\sum_{\chi}{\nu_2(\chi)\chi(g)}$, where $\chi$ runs through the irreducible $\mathbb{C}$-characters of $G$.
\end{theoremm}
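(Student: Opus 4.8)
The statement to prove is the classical Frobenius--Schur count of square roots: for a finite group $G$ and $g\in G$,
\[
|\sqrt{g}| = \sum_{\chi\in\Irr(G)}\nu_2(\chi)\,\chi(g),
\]
where $\nu_2(\chi)\in\{-1,0,1\}$ is the Frobenius--Schur indicator. This is a standard result in character theory, and the plan is to prove it by the usual generating-function argument via the class function $g\mapsto|\sqrt{g}|$.

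First I would introduce the function $\theta\colon G\to\mathbb{C}$, $\theta(g):=|\{f\in G\mid f^2=g\}|=|\sqrt{g}|$. Since conjugation permutes square roots bijectively, $\theta$ is a class function, hence $\theta=\sum_{\chi\in\Irr(G)}c_\chi\chi$ for uniquely determined coefficients $c_\chi=\langle\theta,\chi\rangle=\frac{1}{|G|}\sum_{g\in G}\theta(g)\overline{\chi(g)}$. The key step is to rewrite the sum $\sum_{g\in G}\theta(g)\overline{\chi(g)}$ by counting over pairs: $\sum_{g\in G}\theta(g)\overline{\chi(g)}=\sum_{f\in G}\overline{\chi(f^2)}$. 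Thus $c_\chi=\frac{1}{|G|}\sum_{f\in G}\overline{\chi(f^2)}=\overline{\frac{1}{|G|}\sum_{f\in G}\chi(f^2)}=\overline{\nu_2(\chi)}$. Since $\nu_2(\chi)$ is real (it lies in $\{-1,0,1\}$ by the Frobenius--Schur theory), this gives $c_\chi=\nu_2(\chi)$, and therefore $\theta(g)=\sum_{\chi}\nu_2(\chi)\chi(g)$, as claimed.

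The one point that requires genuine input from representation theory --- and the step I would flag as the real substance behind this short computation --- is the identification $\frac{1}{|G|}\sum_{f\in G}\chi(f^2)=\nu_2(\chi)$ together with the fact that this quantity takes only the values $-1,0,1$. This is exactly the content of the Frobenius--Schur theorem: the value is $1$ if $\chi$ is afforded by a real representation (orthogonal type), $-1$ if $\chi$ is real-valued but not afforded by a real representation (symplectic type), and $0$ if $\chi$ is not real-valued. I would cite this rather than reprove it, since it is standard (e.g.\ in Isaacs' character theory text), noting only that the proof decomposes $\chi\otimes\chi$ into its symmetric and antisymmetric parts, whose characters at $g$ are $\frac12(\chi(g)^2+\chi(g^2))$ and $\frac12(\chi(g)^2-\chi(g^2))$ respectively, and then uses that the trivial character appears in the symmetric (resp.\ antisymmetric) square exactly when the representation carries an invariant symmetric (resp.\ alternating) bilinear form.

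To summarize the order of operations: (i) define $\theta(g)=|\sqrt{g}|$ and observe it is a class function; (ii) expand $\theta$ in the basis $\Irr(G)$ and compute the coefficients as inner products; (iii) swap the order of summation to turn $\langle\theta,\chi\rangle$ into $\frac{1}{|G|}\sum_{f\in G}\overline{\chi(f^2)}$; (iv) invoke the Frobenius--Schur theorem to identify this with $\overline{\nu_2(\chi)}=\nu_2(\chi)$; (v) conclude. No obstacle beyond correctly citing the Frobenius--Schur count is expected; the bookkeeping in steps (ii)--(iii) is entirely routine.
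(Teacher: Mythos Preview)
Your proof is correct and is precisely the standard argument for the Frobenius--Schur count of square roots. The paper itself does not give a proof at all: it simply cites Isaacs' character theory text (\cite[pp.~49ff.]{Isa76a}), treating the result as classical. So your write-up is strictly more detailed than what the paper provides, but it is exactly the argument one finds in the cited reference.
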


\begin{proof}
See, for example, \cite[pp. 49ff.]{Isa76a}.
\end{proof}

\begin{nottation}\label{characterNot}
Let $G$ be a finite group.

\begin{enumerate}
\item Denote by $\Irr(G)$ the set of irreducible $\mathbb{C}$-characters of $G$.

\item Set $\degsum(G):=\sum_{\chi\in\Irr(G)}{\chi(1)}$.
\end{enumerate}
\end{nottation}

\begin{corrollary}\label{characterCor}
Let $G$ be a finite complete group. Then $\L_{-1}(G)\leq\degsum(G)\leq\sqrt{\k(G)\cdot|G|}$.
\end{corrollary}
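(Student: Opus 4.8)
The plan is to chain together the results just established. Since $G$ is complete, Proposition~\ref{sqrtProp}(3) gives $\L_{-1}(G)=\maxsqrt(G)$, so it suffices to bound $\maxsqrt(G)$. Fix $g\in G$ with $|\sqrt{g}|=\maxsqrt(G)$. By Theorem~\ref{characterTheo}, $|\sqrt{g}|=\sum_{\chi\in\Irr(G)}\nu_2(\chi)\chi(g)$. Now use that $\nu_2(\chi)\in\{-1,0,1\}$ and the elementary character-value bound $|\chi(g)|\leq\chi(1)$ for every $g\in G$ and every $\chi\in\Irr(G)$. Hence
\[
\maxsqrt(G)=\left|\sum_{\chi\in\Irr(G)}\nu_2(\chi)\chi(g)\right|\leq\sum_{\chi\in\Irr(G)}|\nu_2(\chi)||\chi(g)|\leq\sum_{\chi\in\Irr(G)}\chi(1)=\degsum(G),
\]
which gives the first inequality $\L_{-1}(G)\leq\degsum(G)$.

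For the second inequality, I would apply the Cauchy--Schwarz inequality to the sum $\degsum(G)=\sum_{\chi\in\Irr(G)}\chi(1)=\sum_{\chi\in\Irr(G)}1\cdot\chi(1)$, obtaining
\[
\degsum(G)\leq\left(\sum_{\chi\in\Irr(G)}1\right)^{1/2}\left(\sum_{\chi\in\Irr(G)}\chi(1)^2\right)^{1/2}=\sqrt{|\Irr(G)|\cdot|G|},
\]
where the last equality uses the two standard facts that $|\Irr(G)|=\k(G)$ (the number of irreducible $\mathbb{C}$-characters equals the number of conjugacy classes) and $\sum_{\chi\in\Irr(G)}\chi(1)^2=|G|$ (the sum of squares of the irreducible degrees equals the group order). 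This yields $\degsum(G)\leq\sqrt{\k(G)\cdot|G|}$ and completes the proof.

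Neither step is a genuine obstacle; the whole corollary is a short combination of Proposition~\ref{sqrtProp}(3), Theorem~\ref{characterTheo}, and three textbook facts from the ordinary character theory of finite groups (the bound $|\chi(g)|\leq\chi(1)$, $|\operatorname{Irr}(G)|=\k(G)$, and $\sum_\chi\chi(1)^2=|G|$). The only mild subtlety worth stating explicitly is that $|\sqrt{g}|$ is automatically a non-negative real number, so taking absolute values in the first displayed chain is harmless; this is also why the triangle inequality loses nothing qualitatively, even though individual terms $\nu_2(\chi)\chi(g)$ may be negative.
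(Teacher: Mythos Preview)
Your proof is correct and follows essentially the same approach as the paper: invoke Proposition~\ref{sqrtProp}(3) to reduce to bounding $\maxsqrt(G)$, apply Theorem~\ref{characterTheo} together with $|\nu_2(\chi)|\leq 1$ and $|\chi(g)|\leq\chi(1)$ for the first inequality, and then Cauchy--Schwarz with $\sum_{\chi}\chi(1)^2=|G|$ for the second. The only cosmetic difference is that you make the identification $|\Irr(G)|=\k(G)$ explicit, whereas the paper leaves this implicit.
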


\begin{proof}
Fix $g\in G$ such that $|\sqrt{g}|=\maxsqrt(G)$, and note that by Proposition \ref{sqrtProp} and Theorem \ref{characterTheo}, we have

\begin{align*}
\L_{-1}(G) &=\maxsqrt(G)=|\sqrt{g}|=||\sqrt{g}||=|\sum_{\chi\in\Irr(G)}{\nu_2(\chi)\chi(g)}| \\ &\leq\sum_{\chi\in\Irr(G)}{|\nu_2(\chi)|\cdot|\chi(g)|}\leq\sum_{\chi\in\Irr(G)}{1\cdot\chi(1)}=\degsum(G).
\end{align*}

The inequality $\degsum(G)\leq\sqrt{\k(G)\cdot|G|}$ is a well-known application of the Cauchy-Schwarz inequality (using that $\sum_{\chi\in\Irr(G)}{\chi(1)^2}=|G|$).
\end{proof}

\subsection{Conjugacy class numbers and outer automorphism group orders in nonabelian finite simple groups \texorpdfstring{$S$}{S} with an application to bounding \texorpdfstring{$\L_{-1}(\Aut(S))$}{L(-1)(Aut(S))}}\label{subsec2P3}

We begin by citing two classical results. The first is a consequence of bounds on conjugacy class numbers due to Fulman and Guralnick \cite{FG12a} and is mentioned and used in \cite[proof of Theorem 9]{GR06a}:

\begin{theoremm}\label{fulTheo}
Let $T$ be a finite almost simple group. Then $\k(T)\leq|T|^{0.41}$.\qed
\end{theoremm}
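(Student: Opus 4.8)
The plan is to deduce Theorem~\ref{fulTheo} from the work of Fulman and Guralnick \cite{FG12a} on conjugacy class numbers of finite simple groups, combined with an elementary reduction from almost simple groups to their simple socles. Recall that a finite almost simple group $T$ has a unique minimal normal subgroup $S=\Soc(T)$, which is a nonabelian finite simple group, and that $T$ embeds in $\Aut(S)$ with $\Inn(S)\cong S\leq T$. So $[T:S]\leq|\Out(S)|$, and the whole statement reduces to a bound of the form $\k(T)\leq|T|^{0.41}$ for each such $T$.

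First I would handle the bounded part of the problem: the sporadic groups, the Tits group, and the finitely many small groups of Lie type and alternating groups for which the asymptotic estimates of \cite{FG12a} are not yet in their ``clean'' range can all be checked directly (e.g.\ via known character-table data or \cite{FG12a} itself), since there are only finitely many of them and each satisfies $\k(T)\leq|T|^{0.41}$ with room to spare. The main content is therefore the generic case. For $S=\Alt_n$ with $n$ large, $\k(S)$ is the partition function $p(n)$, which grows subexponentially in $n$, while $|S|=n!/2$ grows superexponentially; since $|\Out(\Alt_n)|\leq 4$, one gets $\k(T)\leq 4p(n)=|T|^{o(1)}$, far below $|T|^{0.41}$. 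For $S$ a simple group of Lie type of rank $\ell$ over $\F_q$, Fulman and Guralnick prove $\k(S)\leq c\,q^{\ell}$ for an absolute constant (indeed one can take bounds like $\k(S)\leq(6q)^{\ell}$ or similar explicit forms), whereas $|S|$ is comparable to $q^{\dim}$ where $\dim=\dim G$ is roughly $\ell^2$ (and at least $2\ell$ in all cases, with the extreme case being $\PSL_2(q)$, $\ell=1$, $|S|\asymp q^3$). One also has $|\Out(S)|\leq d\cdot f\cdot g$ where $d$ is the order of the diagonal automorphisms, $f$ the order of the field automorphisms ($q=p^f$), and $g\leq 3$ the graph automorphisms; crucially $|\Out(S)|$ is polynomially bounded in terms of $q^{\ell}$ and is dwarfed by $|S|$. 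Putting these together, $\k(T)\leq|\Out(S)|\cdot\k(S)$ is bounded by a fixed power of $q$ with exponent roughly $\ell+O(\log_q(\text{stuff}))$, while $|T|\geq|S|$ has exponent roughly $\ell^2$ (or $3$ when $\ell=1$), so the ratio $\log\k(T)/\log|T|$ is bounded well below $0.41$ for all sufficiently large $q$ and $\ell$ in the generic range; the tightest cases are the small-rank families $\PSL_2(q)$, $\PSL_3(q)$, etc., which one verifies by the explicit class-number formulas (e.g.\ $\k(\PGL_2(q))=q+2$ against $|\PGL_2(q)|=q^3-q$, giving exponent $\to 1/3<0.41$).

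The main obstacle I expect is bookkeeping rather than genuine difficulty: one must be careful that the exponent $0.41$ is genuinely achieved in \emph{every} case and not merely asymptotically, which means the finitely many exceptional groups (small sporadics, $\Alt_5$ through $\Alt_{k_0}$, low-rank Lie type groups over small fields, together with their almost simple overgroups) must actually be checked, and one must track the contribution of $\Out(S)$ carefully in the small-rank families where $|\Out(S)|$ can be comparable to $\k(S)$ (for instance field automorphisms when $q$ is a high prime power over a small prime). This is precisely the content of \cite{FG12a}, where the constant $0.41$ is established with exactly such a case analysis; since the statement is attributed there and already used in \cite[proof of Theorem 9]{GR06a}, I would simply cite it, noting that the reduction from $T$ to $S$ via $\k(T)\leq|\Out(S)|\cdot\k(\Soc(T))$ and $|\Soc(T)|\leq|T|$ is the only extra ingredient needed to pass from the simple-group bound to the almost-simple statement.

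\begin{proof}
This is \cite{FG12a}; see also the discussion in \cite[proof of Theorem 9]{GR06a}. For the reduction to the simple case, note that if $S=\Soc(T)$ is the simple socle of $T$, then $S\leq T\leq\Aut(S)$, so that $\k(T)\leq[T:S]\cdot\k(S)\leq|\Out(S)|\cdot\k(S)$ while $|T|\geq|S|$; the bound $\k(T)\leq|T|^{0.41}$ then follows from the corresponding estimates for $\k(S)$ and $|\Out(S)|$ in terms of $|S|$ established in \cite{FG12a}.
\end{proof}
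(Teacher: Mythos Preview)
Your proposal is correct and takes essentially the same approach as the paper: the paper does not prove this theorem at all but simply cites it as a consequence of the Fulman--Guralnick bounds, noting that it is mentioned and used in \cite[proof of Theorem~9]{GR06a}, and closes the statement with a \qed. Your write-up supplies more detail (the reduction $\k(T)\leq|\Out(S)|\cdot\k(S)$ and the case sketch), but the core content---a citation of \cite{FG12a} and \cite{GR06a}---is identical.
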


The second result involves nice uniform bounds for conjugacy class numbers in subgroups of finite symmetric groups and in finite simple groups of Lie type, both due to Liebeck and Pyber, see \cite[Theorems 1 and 2]{LP97a}.

\begin{theoremm}\label{lieTheo}
The following hold:

\begin{enumerate}
\item Let $n\in\mathbb{N}^+$ and $H\leq\Sym_n$. Then $\k(H)\leq 2^{n-1}$.

\item Let $S$ be a finite simple group of Lie type. Denote by $l$ the untwisted Lie rank and by $q$ the field parameter of $S$. Then $\k(S)\leq(6q)^l$.\qed
\end{enumerate}
\end{theoremm}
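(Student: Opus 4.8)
The plan is to prove the two statements separately: part (1) by induction on the degree $n$, reducing via the O'Nan--Scott theorem to primitive groups; part (2) via the Jordan decomposition in the ambient algebraic group together with Steinberg's count of semisimple classes. For part (1), the base case $n=1$ is trivial, and the elementary workhorse is Gallagher's inequality $\k(H)\leq\k(K)\cdot\k(H/K)$ for any $K\trianglelefteq H$ (proved by counting, above each $H/K$-class, the $H$-classes mapping into it, and bounding their number by the number of $\alpha$-twisted conjugacy classes of $K$ for the relevant $\alpha\in\Aut(K)$, which is at most $\k(K)$). If $H\leq\Sym_n$ is intransitive with orbits of sizes $n_1,\ldots,n_r$ ($r\geq 2$), take $K$ to be the pointwise stabilizer of the first orbit; then $H/K\leq\Sym_{n_1}$ and $K$ embeds in $\Sym_{n-n_1}$ with still shorter orbits, so iterating gives $\k(H)\leq\prod_i 2^{n_i-1}=2^{n-r}\leq 2^{n-2}$ by induction. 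If $H$ is transitive but imprimitive, fix a nontrivial block system with $b$ blocks of size $a$; letting $K$ be the kernel of the action on blocks, $H/K$ is a transitive subgroup of $\Sym_b$ (so $\k(H/K)\leq 2^{b-1}$ by induction) and $K$ is an intransitive subgroup of $\Sym_n$ with at least $b$ orbits (so $\k(K)\leq 2^{n-b}$ by the previous case), whence $\k(H)\leq 2^{n-b}\cdot 2^{b-1}=2^{n-1}$.

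It remains to treat primitive $H\leq\Sym_n$. If $H\in\{\Alt_n,\Sym_n\}$, a direct count of conjugacy classes via partitions of $n$ gives $\k(\Sym_n)=p(n)$ and $\k(\Alt_n)\leq 2p(n)$, and one checks $p(n)\leq 2^{n-2}$ for all $n$ beyond a few small cases (which are verified by hand), so $\k(H)\leq 2^{n-1}$. Otherwise, one runs through the remaining O'Nan--Scott types. In the almost simple case, with socle $T$, Theorem \ref{fulTheo} gives $\k(H)\leq|H|^{0.41}$, and since a primitive almost simple group of degree $n$ that is not $\Alt_n$ or $\Sym_n$ satisfies a polynomial bound $|H|\leq n^{c}$ (the standard-action families on subsets or partitions being handled by the recursive bound below), this is well under $2^{n-1}$ once $n$ is moderately large, with small degrees a finite check. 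In the affine, diagonal, product-action and twisted-wreath cases one writes $\k(H)\leq\k\bigl(H\cap B\bigr)\cdot\k\bigl(H/(H\cap B)\bigr)$ for the natural normal subgroup $B$ (the base group $G^m$ in a wreath product $G\wr\Sym_m$, etc.), bounds $\k(H\cap G^m)\leq\k(G)^m$ by iterating Gallagher through the coordinate projections, uses $\k(G)\leq 2^{a-1}$ by induction ($a<n$), and combines with $\k(\Sym_m)=p(m)\leq 2^{m-1}$; since $ma\leq a^m=n$ for $a,m\geq 2$, this yields $\k(H)\leq 2^{ma-1}\leq 2^{n-1}$.

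For part (2), let $\mathbf{G}$ be the simple, simply connected algebraic group over $\overline{\F}_q$ whose root system has rank $l$, and let $F$ be the Steinberg endomorphism with $\mathbf{G}^F$ the relevant quasisimple group, so that $S=\mathbf{G}^F/\zeta(\mathbf{G}^F)$, $q$ is the field parameter and $l$ the untwisted Lie rank; then $\k(S)\leq\k(\mathbf{G}^F)$. By the Jordan decomposition, each conjugacy class of $\mathbf{G}^F$ is determined by the $\mathbf{G}^F$-class of its semisimple part $s$ together with a unipotent class of $\C_{\mathbf{G}}(s)^F$, so $\k(\mathbf{G}^F)\leq(\text{number of semisimple classes of }\mathbf{G}^F)\cdot\max_s(\text{number of unipotent classes of }\C_{\mathbf{G}}(s)^F)$. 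By Steinberg's theorem the number of semisimple classes of $\mathbf{G}^F$ equals $q^l$ (this uses that $\mathbf{G}$ is simply connected; for twisted types $q$ and $l$ are exactly the field parameter and untwisted rank named in the statement). Each $\C_{\mathbf{G}}(s)$ is connected reductive of rank at most $l$, and the number of unipotent classes of such a group over $\F_q$ — namely the number of geometric unipotent classes, each of which splits into at most the order of a component group of a rational-class centralizer — is bounded by a function of $l$ alone: for classical types by partition counts, for the finitely many exceptional types by the Bala--Carter classification; one checks this function never exceeds $6^l$. Combining, $\k(S)\leq q^l\cdot 6^l=(6q)^l$. The Suzuki and Ree families are handled directly, $\k(S)$ being linear or quadratic in $q$ there.

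I expect two steps to be the real obstacles. In part (1) it is the primitive case: extracting the clean bound $2^{n-1}$ from the available order bounds requires the sharp estimates for primitive groups together with a somewhat delicate O'Nan--Scott case split, and the standard-action families (symmetric and alternating groups on subsets or in product action) plus the small-degree exceptions demand their own arguments. In part (2) the delicate point is the uniform bound $6^l$ on the number of \emph{rational} unipotent classes in reductive groups of rank at most $l$: one must control both the number of geometric unipotent classes and the splitting of each into rational ones (governed by component groups of centralizers), uniformly across all types, and cross-check that for the twisted groups the parameters in Steinberg's count agree with those in the statement.
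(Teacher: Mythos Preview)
The paper does not prove this theorem at all: it is stated with a terminal \qed and attributed to Liebeck and Pyber \cite[Theorems 1 and 2]{LP97a}, so there is no ``paper's own proof'' to compare against. Your sketch is, in broad strokes, exactly the Liebeck--Pyber strategy: for part~(1) the reduction intransitive $\rightarrow$ imprimitive $\rightarrow$ primitive via Gallagher's inequality, followed by an O'Nan--Scott analysis of the primitive case; for part~(2) the Jordan decomposition combined with Steinberg's count of semisimple classes and a uniform bound on unipotent classes in centralizers.

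Your self-assessment of the obstacles is accurate. In part~(1) the primitive almost-simple case is genuinely delicate: the polynomial bound $|H|\leq n^c$ fails precisely for the ``large'' almost-simple primitive groups (alternating groups on $k$-subsets or partitions, classical groups on subspaces), and these are \emph{not} handled by the product-action recursion you describe --- they require separate direct arguments (e.g.\ for $\Sym_m$ on $k$-subsets one simply uses $\k(\Sym_m)=p(m)\leq 2^{m-1}\leq 2^{n-1}$ since $n=\binom{m}{k}\geq m$). Liebeck and Pyber spend most of their effort here. In part~(2) the uniform bound on rational unipotent classes across all connected reductive groups of bounded rank is indeed the technical heart, requiring case-by-case verification for classical and exceptional types and control of component groups; your outline is correct but this step is substantial.
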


Whereas the bound from Theorem \ref{fulTheo} yields in particular an upper bound $\k(S)\leq|S|^{0.41}$ holding for all nonabelian finite simple groups $S$, we can use those from Theorem \ref{lieTheo} to deduce the following asymptotic result:

\begin{corrollary}\label{lieCor}
Let the variable $S$ range over all nonabelian finite simple groups not isomorphic with any $\A_1(q)$ for $q$ a prime power. Then $\limsup_{|S|\to\infty}{\log_{|S|}}{\k(S)}=1/4$.
\end{corrollary}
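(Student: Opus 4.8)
The plan is to split the claim into a lower-bound part and an upper-bound part, using the classification of nonabelian finite simple groups into alternating, sporadic, and Lie type (the latter with $\A_1(q)$ removed). For the lower bound $\limsup \geq 1/4$, I would produce an explicit family of simple groups $S$ of Lie type, not of type $\A_1$, for which $\log_{|S|}\k(S)\to 1/4$. A natural candidate is the family $\A_2(q) = \PSL_3(q)$ as $q\to\infty$ through prime powers: here $|S| = \Theta(q^8)$ while the number of conjugacy classes is $\Theta(q^2)$ (the rank is $2$, and for groups of fixed Lie type the class number grows like a polynomial in $q$ whose degree equals the rank), so $\log_{|S|}\k(S) = \frac{2\log q + O(1)}{8\log q + O(1)}\to 1/4$. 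Actually, to be safe I would just take \emph{any} fixed Lie type of rank $l$ with $|S| = \Theta(q^{N})$ and $\k(S) = \Theta(q^{l})$ and check that $l/N$ is maximized, over all admissible types, at $1/4$, which is achieved e.g. by $\A_2$; this handles the $\liminf$ direction of the $\limsup$, i.e.\ the statement that $1/4$ is attained in the limit.

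For the upper bound $\limsup \leq 1/4$, I would show that for every admissible $S$ of sufficiently large order, $\k(S) \leq |S|^{1/4 + o(1)}$. The alternating groups $\Alt_n$ are dealt with via Theorem \ref{lieTheo}(1): $\k(\Alt_n) \leq \k(\Sym_n) \leq 2^{n-1}$, while $|\Alt_n| = n!/2$ grows superexponentially, so $\log_{|\Alt_n|}\k(\Alt_n)\to 0$. The finitely many sporadic groups are irrelevant to a $\limsup$ as $|S|\to\infty$. The crux is the Lie type case. Here I would use Theorem \ref{lieTheo}(2): $\k(S) \leq (6q)^l$ where $l$ is the untwisted Lie rank and $q$ the field parameter. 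For a group of fixed (twisted or untwisted) Lie type, $|S|$ is a fixed polynomial in $q$ of degree $N$ equal to the number of positive roots plus the rank, i.e.\ $N = \dim$ of the corresponding algebraic group; one then needs $l/N \leq 1/4$ for every admissible type, with $q$ large enough to absorb the constant $6$ and the $o(1)$. Ranging over the families $\A_l$ ($l\geq 2$, since $\A_1$ is excluded), $\B_l$, $\C_l$, $\D_l$, the exceptional types, and the twisted forms, one checks $N \geq 8$ when $l = 2$ and more generally $N \geq 4l$ in all cases (with equality essentially only for $\A_2$); as $l\to\infty$ within a family $N/l$ grows without bound, and for each \emph{fixed} type with $q\to\infty$ one gets $\log_{|S|}\k(S) \to l/N \leq 1/4$. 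Taking the supremum of the limit values over all admissible families gives exactly $1/4$.

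The main obstacle I anticipate is the bookkeeping in the Lie type case: one must verify, uniformly over all the (finitely many) infinite families of simple groups of Lie type other than $\A_1$, that the ratio $l/N$ of Lie rank to algebraic-group dimension is at most $1/4$, and is $= 1/4$ only asymptotically for $\A_2(q)$ as $q\to\infty$. This requires keeping track of the twisted types (where the field parameter $q$ in Liebeck--Pyber versus the order formula must be matched carefully, since for ${}^2\A_l(q)$, ${}^2\D_l(q)$ etc.\ the ``untwisted rank'' and the exponent of $q$ in $|S|$ are defined relative to the ambient untwisted group) and making sure the constant $6^l$ in the bound $(6q)^l$ is swallowed by $q^{\epsilon N}$ for all $q$ past some threshold, which is fine since $N$ is bounded below by a positive multiple of $l$. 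Once this table of ratios is in place, both inequalities follow, and combining the alternating estimate, the finiteness of the sporadic list, and the Lie type estimate yields $\limsup_{|S|\to\infty}\log_{|S|}\k(S) = 1/4$.
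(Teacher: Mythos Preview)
Your approach is exactly the paper's: the lower bound via $\A_2(q)$ (with $\k(\A_2(q))=\Theta(q^2)$ and $|\A_2(q)|=\Theta(q^8)$) and the upper bound via the CFSG split into alternating groups (handled by Theorem~\ref{lieTheo}(1)), the finitely many sporadics, and Lie type (handled by Theorem~\ref{lieTheo}(2)). The paper's own proof is in fact a two-sentence sketch of precisely this argument, so there is no methodological difference.

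One caveat in the bookkeeping you flag as the main obstacle: your assertion that the $q$-degree $N$ of $|S|$ equals $\dim G$ (and hence $N\geq 4l$ ``in all cases'') fails for the very twisted families ${}^2\B_2(q)$, ${}^2\G_2(q)$, ${}^2\F_4(q)$, where $N=\tfrac{1}{2}\dim G$ rather than $\dim G$. If $l$ is read as the untwisted rank ($l=2$ for ${}^2\B_2$ and ${}^2\G_2$), the Liebeck--Pyber bound $(6q)^l$ alone gives ratios tending to $2/5$ and $2/7$, both exceeding $1/4$, so the inequality $l/N\leq 1/4$ is not literally true there. The repair is immediate: either use the known exact class numbers for these two families ($\k({}^2\B_2(q))=q+3$ and $\k({}^2\G_2(q))=q+8$, yielding limits $1/5$ and $1/7$), or interpret $l$ as the BN-pair rank (which is $1$ for both), in which case the problem disappears. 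The large Ree groups ${}^2\F_4$ cause no trouble either way ($4/26<1/4$). You were right that the twisted types are where the care is needed; this is the specific spot.
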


\begin{proof}
To see that said limit superior is at least $1/4$, it suffices to observe that by \cite[Theorem 1.1(1)]{FG12a}, $\k(\A_2(q))=\Theta(q^2)$ as $q\to\infty$, whereas $|\A_2(q)|=\Theta(q^8)$ as $q\to\infty$.

The reverse inequality follows easily from Theorem \ref{lieTheo} and the CFSG.
\end{proof}

Using the observations from Subsection \ref{subsec2P2}, we can also give an upper bound on $\L_{-1}$-values of automorphism groups of nonabelian finite simple groups. To this end, we need the following result on outer automorphism group orders:

\begin{theoremm}\label{outTheo}
The following hold:
\begin{enumerate}
\item $\log_{|S|}{|\Out(S)|}\to0$ as $|S|\to\infty$ for nonabelian finite simple groups $S$.

\item $\log_{|S|}(|\Out(S))|\leq\log_{20160}(12)=0.2507106\ldots$ for all nonabelian finite simple groups $S$, with equality if and only if $S=\A_2(4)=\PSL_3(4)$.
\end{enumerate}
\end{theoremm}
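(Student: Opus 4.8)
The statement is a purely arithmetic consequence of the known classification of the outer automorphism groups of the nonabelian finite simple groups, so the plan is to invoke the CFSG and run through the families of $|\Out(S)|$ formulas, checking in each case that $|\Out(S)|$ is small compared with $|S|$. First I would recall the standard description: for $S$ alternating one has $|\Out(S)|\le 4$ (the value $4$ only for $\Alt_6$, otherwise $\le 2$); for $S$ sporadic, $|\Out(S)|\le 2$; and for $S$ of Lie type, $|\Out(S)|=dfg$, where $d=|\Out(S)/\mathrm{PGL}\text{-part}|$ is the order of the group of diagonal automorphisms modulo inner ones, $f$ is the order of the field automorphisms (so $f=\log_p q$ when $S$ is defined over $\F_q$ with $q=p^f$), and $g$ is the order of the graph automorphisms ($g\le 2$ except $g\le 6$ for $\mathrm{D}_4(q)$). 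For part (1) I would observe that in every Lie-type family $|S|$ grows at least like a fixed positive power of $q^{\text{rank}}$, while $dfg$ grows at most polynomially in the rank and logarithmically in $q$; taking $\log_{|S|}$ therefore kills the $\Out$ contribution as $|S|\to\infty$. The alternating groups contribute $\log_{|S|}|\Out(S)|\le \log_{n!/2}4\to 0$, and there are only finitely many sporadic groups, so part (1) follows.

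For part (2) the point is that the supremum of $\log_{|S|}|\Out(S)|$ over all $S$ is actually attained, and I would locate it by a finite case analysis. For the alternating and sporadic groups $|S|$ is already fairly large relative to the tiny value of $|\Out(S)|$, so these cannot beat the Lie-type groups of small order; the largest contenders among Lie type are the groups with small $|S|$ but comparatively large $dfg$. Concretely I would compare candidates such as $\mathrm{A}_1(q)$, $\mathrm{A}_2(q)$ and $\mathrm{B}_2(q)=\mathrm{C}_2(q)$, $\mathrm{G}_2(q)$, $\mathrm{D}_4(q)$ for small $q$: for $\PSL_3(4)=\mathrm{A}_2(4)$ one has $|S|=20160$ and $|\Out(S)|=\gcd(3,q-1)\cdot f\cdot 2=3\cdot 2\cdot 2=12$, giving exactly $\log_{20160}(12)$. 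Then I would verify that every other simple group gives a strictly smaller value of $\log_{|S|}|\Out(S)|$: for fixed Lie type the ratio $\log|\Out|/\log|S|$ decreases as $q$ grows (since $\log|S|$ grows like a power of $q$ while $\log|\Out|$ grows like $\log q$), so only finitely many $(\text{type},q)$ pairs need to be checked, and a short table of the groups of order up to, say, a few million settles it, $\mathrm{A}_2(4)$ coming out on top.

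The main obstacle is not conceptual but bookkeeping: one must be careful with the exact $|\Out(S)|$ formulas in the small-rank, small-$q$, and twisted cases (the diagonal factor $d=\gcd(n,q\mp1)$ for $\mathrm{A}_{n-1}$ and ${}^2\mathrm{A}_{n-1}$, the extra graph automorphisms for $\mathrm{B}_2,\mathrm{F}_4,\mathrm{G}_2$ in characteristic $2$ or $3$, the $\mathrm{S}_3$ of graph automorphisms of $\mathrm{D}_4$, and the isomorphisms/degeneracies among small groups such as $\mathrm{A}_1(4)\cong\mathrm{A}_1(5)\cong\Alt_5$, $\mathrm{A}_1(9)\cong\Alt_6$, $\mathrm{A}_3(2)\cong\Alt_8$, $\mathrm{B}_2(2)'\cong\mathrm{A}_2(2)\cong\PSL_3(2)$ wait no — these must be identified correctly). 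I would handle this by citing a standard reference for the $|\Out(S)|$ data (for instance the ATLAS or Kleidman--Liebeck), reducing (2) to a finite check, and then simply tabulating $\log_{|S|}|\Out(S)|$ for all simple groups of order below an explicit bound large enough that the decay arguments above guarantee nothing larger occurs outside the table.
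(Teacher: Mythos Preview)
Your proposal is correct and follows essentially the same route as the paper: invoke CFSG and the known $|\Out(S)|$ formulas (the paper cites the ATLAS tables) and reduce part~(2) to a finite check identifying $\PSL_3(4)$ as the extremal case. The only difference is that for part~(1) the paper shortcuts your family-by-family growth comparison by citing Kohl's uniform bound $|\Out(S)|<\log_2|S|$, which immediately yields $\log_{|S|}|\Out(S)|\le \log_{|S|}\log_2|S|\to 0$; since Kohl's proof is itself precisely such a case-by-case verification, the two arguments amount to the same thing.
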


\begin{proof}
This follows from the CFSG, the known formulas for $|\Out(S)|$ for nonabelian finite simple groups $S$ (for example from \cite[Table 5, p. xvi]{CCNPW85a} for the Chevalley groups) and the fact that $|\Out(S)|<\log_2(|S|)$ for all nonabelian finite simple groups $S$, proved in \cite[Theorem 1]{KohlOut}.
\end{proof}

\begin{corrollary}\label{fulCor}
Let $S$ be a nonabelian finite simple group. Then $\L_{-1}(\Aut(S))\leq|S|^{E_0}=|S|^{0.8817\ldots}$.
\end{corrollary}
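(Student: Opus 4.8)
The plan is to bound $\L_{-1}(\Aut(S))$ by combining Corollary \ref{characterCor} with the conjugacy class number bounds of Subsection \ref{subsec2P3} and the outer automorphism bounds of Theorem \ref{outTheo}. The key point is that $\Aut(S)$ is a complete group for every nonabelian finite simple group $S$ (this is the $n=1$ case of the completeness statement recalled in Subsection \ref{subsec1P4}, since $S$ is centerless completely reducible), so Corollary \ref{characterCor} applies and gives
\[
\L_{-1}(\Aut(S))\leq\degsum(\Aut(S))\leq\sqrt{\k(\Aut(S))\cdot|\Aut(S)|}.
\]
So it suffices to bound $\k(\Aut(S))$ and $|\Aut(S)|$ from above in terms of $|S|$.

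For $|\Aut(S)|$: since $|\Aut(S)|=|S|\cdot|\Out(S)|$, Theorem \ref{outTheo}(2) gives $|\Aut(S)|\leq|S|^{1+\log_{20160}(12)}$. For $\k(\Aut(S))$: here one should split into cases. When $S$ is not of Lie type or is a small Lie-type group, there are only finitely many possibilities apart from the alternating family, and for the alternating groups $\A_n$ one uses Theorem \ref{lieTheo}(1) applied to $H=\Aut(\A_n)\leq\Sym_n$ (valid for $n\geq 7$, where $\Aut(\A_n)=\Sym_n$), together with the fact that $|\A_n|=n!/2$ grows superexponentially, so $\log_{|\A_n|}\k(\Aut(\A_n))\to 0$; the exceptional small cases are absorbed into the constant. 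When $S$ is of Lie type, $\Aut(S)$ is almost simple (its socle is $\Inn(S)\cong S$), so Theorem \ref{fulTheo} applies directly and yields $\k(\Aut(S))\leq|\Aut(S)|^{0.41}\leq|S|^{0.41(1+\log_{20160}(12))}$. Actually, since $\Aut(S)$ is almost simple for \emph{every} nonabelian finite simple $S$, Theorem \ref{fulTheo} alone suffices: $\k(\Aut(S))\leq|\Aut(S)|^{0.41}$ unconditionally.

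Putting this together,
\[
\L_{-1}(\Aut(S))\leq\sqrt{|\Aut(S)|^{0.41}\cdot|\Aut(S)|}=|\Aut(S)|^{0.705}\leq|S|^{0.705(1+\log_{20160}(12))}=|S|^{E_0},
\]
using the definition $E_0=0.705(1+\log_{20160}(12))$ from Notation \ref{constantsNot}. This is exactly the claimed bound.

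The main obstacle, such as it is, is making sure the completeness hypothesis of Corollary \ref{characterCor} is genuinely satisfied — i.e., invoking correctly that $\Aut(S)=\Aut(\Soc(\Aut(S)))$ is complete, as recorded in Subsection \ref{subsec1P4} — and checking that Theorem \ref{fulTheo} does apply, which requires knowing $\Aut(S)$ is almost simple with socle $S$; both are immediate from the structure theory recalled earlier, so there is no real difficulty. One could optionally get a slightly cleaner exponent by using the sharper class-number bounds (Theorem \ref{lieTheo} and Corollary \ref{lieCor}) for Lie-type $S$, but since the exponent $E_0$ is defined via the $0.41$ bound, the clean route above is the intended one and the $0.41$ estimate is the only arithmetic input needed.
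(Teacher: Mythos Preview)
Your proof is correct and follows essentially the same route as the paper: use completeness of $\Aut(S)$ to apply Corollary~\ref{characterCor}, then Theorem~\ref{fulTheo} (since $\Aut(S)$ is almost simple) to get $\L_{-1}(\Aut(S))\leq|\Aut(S)|^{0.705}$, and finally Theorem~\ref{outTheo}(2) to pass to $|S|^{E_0}$. The detour through a case split on alternating versus Lie-type groups is unnecessary (as you yourself observe), but otherwise the argument matches the paper's.
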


\begin{proof}
Since $\Aut(S)$ is complete, we have, by Corollary \ref{characterCor} and Theorem \ref{fulTheo},

\[\L_{-1}(\Aut(S))\leq\sqrt{\k(\Aut(S))\cdot|\Aut(S)|}\leq\sqrt{|\Aut(S)|^{0.41}\cdot|\Aut(S)|}=|\Aut(S)|^{0.705},\]

which in view of Theorem \ref{outTheo} implies the assertion.
\end{proof}

\subsection{Group-theoretic functions}\label{subsec2P4}

The aim of this subsection is to develop a basic, general theory for deriving almost-solvability and almost-abelianity results under assumptions of a quantitative character on finite groups. We note that part of this theory (essentially Definition \ref{groupTheoDef}, large parts of Example \ref{groupTheoEx} as well as Lemmata \ref{almostSolvLem} and \ref{dlBoundLem}) was already included in an at present unpublished manuscript combined from the author's three arXiv preprints \cite{Bor15b,Bor15c,Bor15d} and currently under review for journal publication, but as the theory is actually never explicitly elaborated on in the arXiv preprints themselves, we give a detailed account of it here, including the new result Lemma \ref{poBoundLem}.

Our approach is rather general. Consider a function $f$ from the class $\G^{\fin}$ of finite groups into the set of non-negative real numbers. In order to derive restrictions on the structure of finite groups $G$ where $f(G)$ is sufficiently large, we require that $f$ satisfy certain inequalities relating $f(G)$ to values of $f$ on subgroups and quotients of $G$. More precisely, we introduce the following concepts:

\begin{deffinition}\label{groupTheoDef}
A function $f:\G^{\fin}\rightarrow\left[0,\infty\right)$ is called \textbf{group-theoretic} if and only if $f(G_1)=f(G_2)$ whenever $G_1$ and $G_2$ are isomorphic finite groups. Henceforth, assume that $f$ is a group-theoretic function.

\begin{enumerate}
\item $f$ is called \textbf{relative} if and only if $\im(f)\subseteq\left[0,1\right]$.

\item We define $f_{\rel}$ to be the function $\G^{\fin}\rightarrow\left[0,\infty\right),G\mapsto f(G)/|G|$.

\item $f$ is called \textbf{increasing on characteristic quotients} (\textbf{CQ-increasing}) if and only if for all finite groups $G$ and all $N\cha G$, we have $f(G)\leq f(G/N)$.

\item $f$ is called \textbf{increasing on characteristic subgroups} (\textbf{CS-increasing}) if and only if for all finite groups $G$ and all $N\cha G$, we have $f(G)\leq f(N)$.

\item $f$ is called \textbf{characteristically submultiplicative} (\textbf{C-submultiplicative}) if and only if for all finite groups $G$ and all $N\cha G$, we have $f(G)\leq f(N)\cdot f(G/N)$.
\end{enumerate}
\end{deffinition}

As for point (2) in Definition \ref{groupTheoDef}, observe that $f_{\rel}$ is of course \textit{not} relative in general, but many \enquote{natural} group-theoretic functions $f$ (such as the ones from Subsection \ref{subsec1P1}) satisfy $f(G)\leq|G|$ for all $G\in\G^{\fin}$, and for such $f$, $f_{\rel}$ \textit{is} relative.

\begin{remmark}\label{groupTheoRem}
We note the following facts following immediately from the definitions of the concepts involved:

\begin{enumerate}
\item A relative and C-submultiplicative group-theoretic function is both CS-increa-\linebreak[4]sing and CQ-increasing.

\item For a group-theoretic function $F$, $F_{\rel}$ is CQ-increasing if and only if for all finite groups $G$ and all $N\cha G$, we have $F(G)\leq|N|\cdot F(G/N)$.

\item A group-theoretic function $F$ is C-submultiplicative if and only if $F_{\rel}$ is C-submultiplicative.
\end{enumerate}
\end{remmark}

Let us illustrate the concepts introduced in Definition \ref{groupTheoDef} by means of several examples, some of which will also be of relevance later.

\begin{exxample}\label{groupTheoEx}
Consider the following examples of group-theoretic functions and their properties:

\begin{enumerate}
\item All the functions $\l_e$ are CQ-increasing. To see this, fix an automorphism $\alpha$ of $G$ such that $\L_e(\alpha)=\L_e(G)$. Just as in the proof of Lemma \ref{lTwoLem}, we see that $\P_e(\alpha)$ can only contain elements from $\L_e(\tilde{\alpha})\leq\L_e(G/N)$ many cosets of $G/N$, whence $\L_e(G)=\L_e(\alpha)\leq|N|\cdot\L_e(G/N)$.

\item The function $\L_{-1}$ (and thus $\l_{-1}$ too) is C-submultiplicative, see \cite[Lemma 1.2]{Heg05a}.

\item The function $\L_2$ is \textit{not} C-submultiplicative, since $\l_2$ is not even CS-increasing: $\l_2((\mathbb{Z}/2\mathbb{Z})^2)=1/4<5/12=\l_2(\Alt_4)$, although $\Alt_4$ contains a characteristic subgroup isomorphic with $(\mathbb{Z}/2\mathbb{Z})^2$.

\item The function $\k$ (and thus $\k_{\rel}$ too) is C-submultiplicative; actually, it even satisfies the stronger property that $\k(G)\leq\k(N)\cdot\k(G/N)$ for all finite groups $G$ and all \textit{normal} subgroups $N$ of $G$, see \cite{Gal70a}.

\item It is readily verified that for all finite groups $G$ and all normal subgroups $N$ of $G$, we have $\exp(G)\mid\exp(N)\cdot\exp(G/N)$, where $\exp$ denotes the group exponent. In particular, $\exp$ is C-submultiplicative.

\item The function $\mao_{\rel}$ is relative and CQ-increasing, see \cite[Corollary 1.1.4(1)]{Bor15d}.
\end{enumerate}
\end{exxample}

Let us now proceed to derive some lemmata to make use of the concepts introduced in Definition \ref{groupTheoDef}. The main morale of the following lemma is that deriving $C$-almost-solvability for finite groups $G$ under some constant lower bound on $f(G)$, where $f$ is CQ-increasing, is the same as bounding the order of a finite semisimple group which satisfies this bound from above by $C$:

\begin{lemmma}\label{almostSolvLem}
Let $f$ be a CQ-increasing group-theoretic function. Assume that, for finite semisimple groups $H$, we have $f(H)\to 0$ as $|H|\to\infty$. More explicitly, fix a function $g:\left(0,\infty\right)\rightarrow\left(0,\infty\right)$ such that for all $\rho\in\left(0,\infty\right)$ and all finite semisimple groups $H$ such that $f(H)\geq\rho$, we have $|H|\leq g(\rho)$.

Then if $G$ is a finite group such that $f(G)\geq\rho$, then $[G:\Rad(G)]\leq g(\rho)$.
\end{lemmma}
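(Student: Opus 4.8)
The plan is to prove the contrapositive-flavoured direct implication: given a finite group $G$ with $f(G)\geq\rho$, we want to bound $[G:\Rad(G)]$. The natural object to look at is the quotient $\bar{G}:=G/\Rad(G)$, which is a finite semisimple group (its solvable radical is trivial, since $\Rad(G)$ is precisely the preimage of $\Rad(\bar G)$ under the projection). Since $\Rad(G)\cha G$ — the solvable radical is a characteristic subgroup — and $f$ is CQ-increasing, we get $f(\bar G)=f(G/\Rad(G))\geq f(G)\geq\rho$.

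Now apply the hypothesis on $f$ restricted to semisimple groups: we have a function $g$ such that any finite semisimple $H$ with $f(H)\geq\rho$ satisfies $|H|\leq g(\rho)$. Since $\bar G$ is semisimple and $f(\bar G)\geq\rho$, we conclude $|\bar G|=|G/\Rad(G)|=[G:\Rad(G)]\leq g(\rho)$, which is exactly the claimed bound.

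I expect there to be essentially no obstacle here: the lemma is really just the observation that $\Rad(G)$ is characteristic, that passing to $G/\Rad(G)$ produces a semisimple group, and that CQ-increasingness lets the lower bound on $f$ survive this particular quotient. The only points that deserve a sentence of justification are (i) that $\Rad(G/\Rad(G))$ is trivial, hence $G/\Rad(G)$ is indeed semisimple in the sense of Subsection \ref{subsec1P4} — this is the standard fact that the solvable radical of a quotient by the solvable radical is trivial — and (ii) that $\Rad(G)$ is a characteristic subgroup of $G$, which is immediate since any automorphism of $G$ permutes the solvable normal subgroups of $G$ and hence fixes the one generated by all of them. Everything else is a one-line chain of inequalities, so the write-up will be short.

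\begin{proof}[Proof of Lemma \ref{almostSolvLem}]
Set $\bar{G}:=G/\Rad(G)$. The solvable radical $\Rad(G)$ is a characteristic subgroup of $G$ (every automorphism of $G$ permutes the solvable normal subgroups of $G$, hence fixes the subgroup generated by all of them, which is $\Rad(G)$). Moreover, $\Rad(\bar G)$ is trivial: its preimage in $G$ under the canonical projection is a normal subgroup of $G$ which is an extension of the solvable group $\Rad(G)$ by the solvable group $\Rad(\bar G)$, hence solvable, and therefore contained in $\Rad(G)$; thus $\Rad(\bar G)$ is trivial and $\bar G$ is a finite semisimple group. Since $f$ is CQ-increasing and $\Rad(G)\cha G$, we have $f(\bar G)=f(G/\Rad(G))\geq f(G)\geq\rho$. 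Applying the assumed property of $f$ to the finite semisimple group $\bar G$ yields $[G:\Rad(G)]=|G/\Rad(G)|=|\bar G|\leq g(\rho)$, as desired.
\end{proof}
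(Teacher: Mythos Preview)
Your proof is correct and follows essentially the same approach as the paper's: pass to the characteristic quotient $G/\Rad(G)$, use CQ-increasingness to propagate the lower bound $f(G/\Rad(G))\geq\rho$, observe this quotient is semisimple, and invoke the hypothesis on $g$. Your version merely spells out in a bit more detail why $\Rad(G)$ is characteristic and why $G/\Rad(G)$ is semisimple, facts the paper takes as standard.
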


\begin{proof}
Since $f$ is CQ-increasing, we get that $\rho\leq f(G)\leq f(G/\Rad(G))$. Now $G/\Rad(G)$ is semisimple, and so it follows by choice of $g$ that $[G:\Rad(G)]=|G/\Rad(G)|\leq g(\rho)$.
\end{proof}

The next lemma serves to bound the derived length of the solvable radical of a finite group under suitable quantitative conditions; its proof is a generalization of \cite[proof of Theorem 1.1]{Heg05a}:

\begin{lemmma}\label{dlBoundLem}
Let $f$ be a C-submultiplicative relative group-theoretic function. Assume that there exist $k\in\mathbb{N}^+$ and $\rho_0\in\left(0,1\right)$ such that for any finite solvable group $G$ of derived length at least $k$, we have $f(G)\leq\rho_0$. Then:

\begin{enumerate}
\item For any finite solvable group $G$, we have $f(G)\leq\rho_0^{\lfloor\dl(G)/k\rfloor}$.

\item For any finite group $G$ such that $f(G)\geq\rho$, we have that $\dl(\Rad(G))\leq k\cdot\log_{\rho_0}(\rho)+k-1$.
\end{enumerate}
\end{lemmma}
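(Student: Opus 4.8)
The plan is to reduce both parts to the hypothesis via the C-submultiplicativity of $f$, applied along a derived series of $\Rad(G)$ chopped into blocks of length $k$.

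For part (1), let $G$ be a finite solvable group with $d:=\dl(G)$, and write $m:=\lfloor d/k\rfloor$. First I would locate a characteristic subgroup $N\cha G$ which is itself solvable of derived length at least $km$ and whose quotient $G/N$ has derived length $d-km < k$; the natural candidate is $N=G^{(d-km)}$, the $(d-km)$-th derived subgroup, which is characteristic in $G$, has derived length exactly $km$ (since $N^{(i)}=G^{(d-km+i)}$), and satisfies $\dl(G/N)=d-km$. Actually, to iterate cleanly, I would instead peel off one block at a time: set $N:=G^{(k)}$, which is characteristic in $G$, has $\dl(N)=d-k$, and $\dl(G/N)\le k$. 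Wait — I need $\dl(G/N)\geq k$ to apply the hypothesis to the quotient; the cleaner choice is the \emph{other} way around, namely use C-submultiplicativity $f(G)\le f(N)\cdot f(G/N)$ with $N$ chosen so that $G/N$ has derived length $\ge k$. So take $N:=G^{(k)}$; then $G/N$ is solvable of derived length exactly $\min(d,k)\ge k$, hence $f(G/N)\le\rho_0$ by hypothesis, and $f(G)\le\rho_0\cdot f(N)$ with $\dl(N)=d-k$. Since $f$ is relative, $f(N)\le 1$ always, and induction on $\dl(G)$ (base case $\dl(G)<k$, where the claimed bound reads $f(G)\le\rho_0^0=1$, which holds trivially) gives $f(G)\le\rho_0\cdot\rho_0^{\lfloor(d-k)/k\rfloor}=\rho_0^{\lfloor d/k\rfloor}$, using $\lfloor(d-k)/k\rfloor=\lfloor d/k\rfloor-1$ for $d\ge k$.

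For part (2), let $G$ be an arbitrary finite group with $f(G)\ge\rho$. Since $f$ is C-submultiplicative and $\Rad(G)\cha G$ with $G/\Rad(G)$ semisimple, I would first note $f(G)\le f(\Rad(G))\cdot f(G/\Rad(G))\le f(\Rad(G))$, as $f$ is relative so $f(G/\Rad(G))\le 1$. Hence $f(\Rad(G))\ge\rho$ as well. Now apply part (1) to the solvable group $\Rad(G)$: $\rho\le f(\Rad(G))\le\rho_0^{\lfloor\dl(\Rad(G))/k\rfloor}$. Taking $\log_{\rho_0}$ (which reverses inequalities since $0<\rho_0<1$) yields $\log_{\rho_0}(\rho)\ge\lfloor\dl(\Rad(G))/k\rfloor>\dl(\Rad(G))/k-1$, i.e. $\dl(\Rad(G))<k\cdot\log_{\rho_0}(\rho)+k$, and since the left side is an integer this gives $\dl(\Rad(G))\le k\cdot\log_{\rho_0}(\rho)+k-1$.

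The only genuinely delicate point is getting the floor bookkeeping in part (1) exactly right and making sure the induction's base case aligns with the exponent $\lfloor\dl(G)/k\rfloor$ being $0$; everything else is a routine two-line application of the hypotheses, with relativeness of $f$ used precisely to discard the $f(G/N)$ and $f(N)$ factors that we have no control over. I do not anticipate a serious obstacle, though one should double-check that C-submultiplicativity is being applied to a genuinely \emph{characteristic} subgroup (the derived subgroups $G^{(i)}$ are characteristic, so this is fine).
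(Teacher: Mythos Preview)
Your approach is essentially the same as the paper's: for (1), the paper phrases it as ``a solvable group $G$ with $\dl(G)\ge k$ has a characteristic series of length $\lfloor\dl(G)/k\rfloor$ whose factors all have derived length at least $k$'' and then applies C-submultiplicativity along that series, which is exactly your induction peeling off $N=G^{(k)}$ one block at a time; for (2), the paper likewise uses CS-increasingness (equivalently, your $f(G)\le f(\Rad(G))\cdot f(G/\Rad(G))\le f(\Rad(G))$) and then the bound from (1).

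One small arithmetic slip at the very end of (2): from $\dl(\Rad(G))<k\log_{\rho_0}(\rho)+k$ and ``the left side is an integer'' you \emph{cannot} conclude $\dl(\Rad(G))\le k\log_{\rho_0}(\rho)+k-1$, since the right-hand side need not be an integer. The correct one-liner (which is what the paper does) is to use the sharper inequality $k\lfloor d/k\rfloor\ge d-(k-1)$ (valid for any integer $d\ge0$) rather than $\lfloor d/k\rfloor>d/k-1$; combining with $\lfloor d/k\rfloor\le\log_{\rho_0}(\rho)$ gives $d\le k\log_{\rho_0}(\rho)+k-1$ directly. This is a bookkeeping fix, not a gap in the strategy.
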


\begin{proof}
For (1): The assertion is trivial if $\dl(G)<k$. On the other hand, observe that a solvable group $G$ with $\dl(G)\geq k$ has a characteristic series of length $\lfloor\dl(G)/k\rfloor$ where the factors all have derived length at least $k$. The assertion now follows by induction on the length of such a series, using C-submultiplicativity of $f$.

For (2): Since $f$ is also CS-increasing, we conclude that $\rho\leq f(G)\leq f(\Rad(G))\leq\rho_0^{\lfloor\dl(\Rad(G))/k\rfloor}$. The assertion follows upon observing that $k\cdot\lfloor\dl(\Rad(G))/k\rfloor\geq\dl(\Rad(G))-(k-1)$.
\end{proof}

For the next and final result of this section (Lemma \ref{poBoundLem}), we introduce the following terminology and notation:

\begin{deffinition}\label{poBoundDef}
Consider the following definitions:

\begin{enumerate}
\item Let $e\in\mathbb{R}$ and $\Cc$ be a class of finite groups. We say that a group-theoretic function $F$ is \textbf{$e$-power-of-order-bounded on $\Cc$}, or shortly \textbf{$e$-PO-bounded on $\Cc$}, if and only if $F(G)\leq|G|^e$ for all $G\in\Cc$.

\item We define a function $\t:\mathbb{R}\rightarrow\mathbb{R}$ via $\t(e):=\frac{e+\log_{20160}(12)+\frac{1}{3}\log_{60}(24)}{1+\log_{20160}(12)+\frac{1}{3}\log_{60}(24)}$.
\end{enumerate}
\end{deffinition}

Note that for $e<1$, we have $e<\t(e)<1$.

Now assume that $F$ is a group-theoretic function that is $1$-PO-bounded on $\G^{\fin}$ (as is the case for many \enquote{natural} examples, such as the functions $\L_e$, $e\in\mathbb{Z}$, $\k$ and $\mao$). Let us say that $F$ satisfies a \textit{nontrivial PO-bound} on a class $\Cc$ of finite groups if and only if $F$ is $e$-PO-bounded on $\Cc$ for some $e<1$. The following lemma allows us to derive a nontrivial PO-bound for $F$ on the class of all finite semisimple groups from such a bound on just the class of finite nonabelian characteristically simple groups, provided that $F$ is C-submultiplicative:

\begin{lemmma}\label{poBoundLem}
Let $F$ be a group-theoretic function. Assume that $F$ is C-submultipli-\linebreak[4]cative and $1$-PO-bounded on $\G^{\fin}$. Furthermore, assume that for some $e<1$, $F$ is even $e$-PO-bounded on the class of finite nonabelian characteristically simple groups. Then $F$ is $\t(e)$-PO-bounded on the class of finite semisimple groups.
\end{lemmma}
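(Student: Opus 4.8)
The plan is to argue by strong induction on $|H|$ over all finite semisimple groups $H$; the base case $H=1$ is immediate from $1$-PO-boundedness. For the inductive step, take $H\neq 1$ finite semisimple and write $\Soc(H)=S_1^{n_1}\times\cdots\times S_r^{n_r}$ with the $S_i$ pairwise non-isomorphic nonabelian finite simple groups and $n_i\in\mathbb{N}^+$ (so $r\geq 1$, and $\Soc(H)\neq 1$). Recalling from Subsection \ref{subsec1P4} that $H$ embeds into $\Aut(\Soc(H))$ with image containing $\Inn(\Soc(H))\cong\Soc(H)$, the quotient $H/\Soc(H)$ embeds into $\Out(\Soc(H))=\prod_{i=1}^r\Out(S_i)\wr\Sym_{n_i}$. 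I would then apply C-submultiplicativity first across the characteristic subgroup $\Soc(H)$ of $H$ and then across the characteristic subgroup $\Rad(H/\Soc(H))$ of $H/\Soc(H)$, obtaining
\[
F(H)\leq F(\Soc(H))\cdot F(\Rad(H/\Soc(H)))\cdot F\big((H/\Soc(H))/\Rad(H/\Soc(H))\big).
\]
The three factors are then bounded as follows. Since each $S_i^{n_i}$ is a finite nonabelian characteristically simple group and is characteristic in $\Soc(H)$ (by the structure of $\Aut(\Soc(H))$ recalled in Subsection \ref{subsec1P4}), repeated application of C-submultiplicativity across the characteristic direct factors $S_i^{n_i}$ together with the hypothesised $e$-PO bound gives $F(\Soc(H))\leq\prod_i|S_i^{n_i}|^e=|\Soc(H)|^e$; the middle factor is $\leq|\Rad(H/\Soc(H))|$ by $1$-PO-boundedness; and the last factor is $F$ of a finite semisimple group (a quotient of a finite group by its solvable radical is semisimple) of order $|H/\Soc(H)|/|\Rad(H/\Soc(H))|\leq|H|/|\Soc(H)|<|H|$, hence $\leq\big(|H/\Soc(H)|/|\Rad(H/\Soc(H))|\big)^{\t(e)}$ by the inductive hypothesis.

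Writing $m:=|\Soc(H)|$, $\rho:=|\Rad(H/\Soc(H))|$ and $q:=|H/\Soc(H)|/\rho$ (so that $|H|=m\rho q$), these bounds combine to $F(H)\leq m^e\rho\,q^{\t(e)}$, and I would finish by checking $m^e\rho\,q^{\t(e)}\leq(m\rho q)^{\t(e)}=|H|^{\t(e)}$. Taking logarithms and cancelling $\t(e)\log q$, this amounts to $(\t(e)-e)\log m\geq(1-\t(e))\log\rho$; since $e<\t(e)<1$ and, directly from the definition of $\t$, $\tfrac{\t(e)-e}{1-\t(e)}=\log_{20160}(12)+\frac{1}{3}\log_{60}(24)$, it reduces to
\[
\rho\leq m^{\log_{20160}(12)+\frac{1}{3}\log_{60}(24)}.
\]
To prove this, I would use that $\Rad(H/\Soc(H))$ is a \emph{solvable} subgroup of $\prod_{i=1}^r\Out(S_i)\wr\Sym_{n_i}$: projecting it into each wreath factor $\Out(S_i)\wr\Sym_{n_i}=\Out(S_i)^{n_i}\rtimes\Sym_{n_i}$ gives a solvable subgroup, whose image in $\Sym_{n_i}$ is a solvable permutation group of order $\leq 24^{(n_i-1)/3}$ by Dixon's classical bound on solvable subgroups of symmetric groups, and whose intersection with $\Out(S_i)^{n_i}$ has order $\leq|\Out(S_i)|^{n_i}$. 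Hence
\[
\rho\leq\prod_{i=1}^r|\Out(S_i)|^{n_i}\cdot 24^{(n_i-1)/3}.
\]
By Theorem \ref{outTheo}(2), $|\Out(S_i)|^{n_i}\leq|S_i|^{n_i\log_{20160}(12)}$, and since every nonabelian finite simple group has order $\geq 60=|\Alt_5|$ we have $24^{(n_i-1)/3}<24^{n_i/3}=60^{(n_i/3)\log_{60}(24)}\leq|S_i|^{(n_i/3)\log_{60}(24)}$; multiplying over $i$ yields exactly $\rho\leq\prod_i|S_i|^{n_i(\log_{20160}(12)+\frac{1}{3}\log_{60}(24))}=m^{\log_{20160}(12)+\frac{1}{3}\log_{60}(24)}$, completing the induction.

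The step I expect to be the crux is this bound on $|\Rad(H/\Soc(H))|$: the trivial estimate $|\Rad(H/\Soc(H))|\leq|\prod_i\Out(S_i)\wr\Sym_{n_i}|$ drags in the factors $n_i!$, which grow far too fast to be absorbed into a power of $|\Soc(H)|$, and it is precisely Dixon's $24^{(n-1)/3}$-bound for \emph{solvable} permutation groups — together with $|S|\geq 60$ for nonabelian finite simple $S$ — that rescues the argument and forces the constant $\frac{1}{3}\log_{60}(24)$. A secondary point requiring care is that C-submultiplicativity may only be invoked across genuinely characteristic subgroups, which is why I apply it separately to $\Soc(H)\cha H$, to the factors $S_i^{n_i}\cha\Soc(H)$, and to $\Rad(H/\Soc(H))\cha H/\Soc(H)$, rather than to a single subnormal chain inside $H$.
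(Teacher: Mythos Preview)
Your proof is correct and uses the same ingredients as the paper (C-submultiplicativity, the $e$-PO hypothesis on characteristically simple groups, the $1$-PO bound, Theorem~\ref{outTheo}(2), and Dixon's Theorem~\ref{dixonTheo}), but the decomposition is organised differently. The paper first passes to the characteristic subgroup $K:=H\cap(\Aut(S_1)^{n_1}\times\cdots\times\Aut(S_r)^{n_r})$, so that $H/K$ embeds into $\Sym_{n_1}\times\cdots\times\Sym_{n_r}$ \emph{without} any $\Out(S_i)$-factors; the $\Out(S_i)$-contribution is absorbed separately into the bound on $F(K)$ via $K/\Soc(H)\leq\prod_i\Out(S_i)^{n_i}$, and only then is Dixon applied to $\Rad(H/K)$. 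You instead split directly at $\Soc(H)$, so that $H/\Soc(H)$ sits inside $\prod_i\Out(S_i)\wr\Sym_{n_i}$, and you handle the $\Out$ and $\Sym$ contributions in a single estimate for $|\Rad(H/\Soc(H))|$. Your route is one characteristic layer shorter and the final arithmetic (reducing to $\rho\leq m^{\log_{20160}(12)+\frac{1}{3}\log_{60}(24)}$ via the identity $\frac{\t(e)-e}{1-\t(e)}=\log_{20160}(12)+\frac{1}{3}\log_{60}(24)$) is cleaner; the paper's route has the minor advantage that the quotient to which Dixon is applied lives in a pure product of symmetric groups, making the application slightly more transparent. Either way the same constant $\t(e)$ drops out.
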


The following classical theorem of Dixon, which bounds the orders of solvable subgroups of finite symmetric groups, will be used in the proof of Lemma \ref{poBoundLem}:

\begin{theoremm}\label{dixonTheo}(\cite[Theorem 3]{Dix67a})
Let $n\in\mathbb{N}^+$ and $G$ be a solvable subgroup of $\Sym_n$. Then $|G|\leq 24^{(n-1)/3}$.\qed
\end{theoremm}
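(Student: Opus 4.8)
The plan is to induct on $n$ via the classical trichotomy for a permutation group $G\le\Sym_n$: $G$ is intransitive, or transitive and imprimitive, or primitive. The base case $n=1$ is trivial, but it is worth recording that $|\Sym_4|=24=24^{(4-1)/3}$, so the bound is already attained at $n=4$ — and, via the $k$-fold iterated wreath product of $\Sym_4$, at every $n=4^k$ — so no slack may be wasted. If $G$ is intransitive it lies in $\Sym_{n_1}\times\Sym_{n_2}$ with $n_1+n_2=n$ and $1\le n_i<n$; the two coordinate projections of $G$ are solvable, so the induction hypothesis gives $|G|\le 24^{(n_1-1)/3}24^{(n_2-1)/3}=24^{(n-2)/3}<24^{(n-1)/3}$. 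If $G$ is transitive and imprimitive, a block system with $b$ blocks of size $a$ gives $n=ab$ and $1<a,b<n$, and $G\hookrightarrow\Sym_a\wr\Sym_b$; its image in $\Sym_b$ is transitive solvable of order $\le 24^{(b-1)/3}$, and each of the $b$ coordinate projections of $G\cap\Sym_a^b$ is a solvable subgroup of $\Sym_a$ of order $\le 24^{(a-1)/3}$, so $|G|\le 24^{b(a-1)/3}24^{(b-1)/3}=24^{(ab-1)/3}=24^{(n-1)/3}$. Both cases are routine; the imprimitive estimate is exactly tight, which forces using the induction hypothesis at full strength.

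The substantive case is $G$ primitive. Since $G$ is solvable, its socle is a minimal normal subgroup that is elementary abelian, hence $\cong(\mathbb{Z}/p)^d$ for some prime $p$ and $d\ge 1$; being transitive and abelian it is regular, so $n=p^d$ and $G=V\rtimes G_0$ with $V=\Soc(G)$ regular and $G_0$ a point stabilizer, which is a solvable (and, since an invariant proper nonzero subspace would yield a nontrivial block system, irreducible) subgroup of $\GL(d,p)$. So it suffices to prove the linear statement: if $G_0\le\GL(d,p)$ is solvable, then $p^d\cdot|G_0|\le 24^{(p^d-1)/3}$. This too is sharp, since $\GL(2,2)\cong\Sym_3$ gives $2^2\cdot 6=24$, recovering $\Sym_4$.

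The linear statement is proved by a parallel induction on $d$, the same assertion with $p$ replaced by an arbitrary prime power $q$ being established simultaneously (the primitive-linear step passes to extension fields). For $d=1$, $|G_0|\le p-1$ and $p(p-1)\le 24^{(p-1)/3}$ is checked directly. If $G_0$ is reducible, conjugate it into block upper-triangular form with diagonal blocks in $\GL(k,p)$ and $\GL(d-k,p)$; the induction hypothesis on the blocks together with $p^{k(d-k)}$ for the unipotent part reduces the claim to the numerical inequality $3k(d-k)\log_{24}p\le(p^k-1)(p^{d-k}-1)$, which holds for all $p$ and $1\le k<d$. If $G_0$ is irreducible but imprimitive as a linear group, it permutes a decomposition of $V$ into $t\ge 2$ subspaces of dimension $m=d/t$, so $G_0\hookrightarrow\GL(m,p)\wr\Sym_t$; the induction hypothesis plus the permutation bound on $\Sym_t$ (note $t\le d<p^d$) reduces the claim to $t\le(p^m)^{t-1}$, which is clear.

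The remaining subcase — $G_0$ irreducible and primitive as a linear group — is the main obstacle, and is where Dixon's argument in \cite[Theorem 3]{Dix67a} concentrates. Here one invokes the classical structure theory of solvable primitive linear groups (due essentially to Suprunenko and Huppert): the Fitting subgroup $F(G_0)$ is a central product of a cyclic group with groups of symplectic type for distinct primes $r_i$, one has $\C_{G_0}(F(G_0))=\zeta(F(G_0))$ cyclic, and $G_0/\zeta(F(G_0))$ embeds into a product of symplectic groups $\operatorname{Sp}(2a_i,r_i)$ over small fields together with a cyclic factor, with $d=\prod r_i^{a_i}$; feeding the induction hypothesis into the solvable subgroups of those symplectic factors, together with explicit symplectic-group orders, yields the bound. (Equivalently, one may peel off a maximal cyclic normal subgroup of $G_0$, making $V$ a module over some $\mathbb{F}_{p^e}$ with $e\mid d$ and realizing $G_0$ inside $\Gamma\operatorname{L}(d/e,p^e)$ with $G_0/\C_{G_0}(A)\hookrightarrow\Gal(\mathbb{F}_{p^e}/\mathbb{F}_p)$, recursing on $\C_{G_0}(A)$ over the larger field; the recursion terminates at $G_0\le\Gamma\operatorname{L}(1,p^d)$, where $|G_0|\le d(p^d-1)$ and $p^d\cdot d(p^d-1)\le 24^{(p^d-1)/3}$ is immediate, the symplectic-type situation being the only residue.) The real difficulty is that, because the global bound is tight near $n=4^k$, these structural estimates must be carried out sharply, and the finitely many small pairs $(d,p)$ where no slack is available have to be inspected individually.
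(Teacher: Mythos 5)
The paper does not prove this statement at all: it is imported verbatim as \cite[Theorem 3]{Dix67a} and stated with a \qed, so there is no in-paper argument to compare yours against. Your reconstruction follows what is essentially Dixon's own strategy, and the parts you actually carry out are correct: the intransitive and imprimitive reductions are exactly the sharp ones (the imprimitive case giving $24^{b(a-1)/3}\cdot 24^{(b-1)/3}=24^{(n-1)/3}$ with no slack), the passage from a primitive solvable group to a regular elementary abelian socle with irreducible solvable point stabilizer $G_0\leq\GL(d,p)$ is standard, and the reducible and linearly imprimitive steps correctly reduce to the stated numerical inequalities $3k(d-k)\log_{24}p\leq(p^k-1)(p^{d-k}-1)$ and $t\leq(p^m)^{t-1}$, both of which hold.

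The genuine gap is the case you yourself flag as ``the main obstacle'': $G_0$ irreducible and primitive as a linear group. There you invoke the Suprunenko--Huppert structure theory (central product of a cyclic group with groups of symplectic type, $G_0/\zeta(\operatorname{F}(G_0))$ embedding into a product of symplectic groups over prime fields) and assert that ``feeding the induction hypothesis into the solvable subgroups of those symplectic factors \ldots yields the bound,'' but no estimate is actually performed. This cannot be waved through: as you correctly observe, the bound $24^{(n-1)/3}$ is attained at every $n=4^k$, so every other branch of the induction must be shown to produce strictly smaller contributions, and the symplectic-type analysis is precisely where one must verify, with explicit orders of $\operatorname{Sp}(2a_i,r_i)$ and of the extraspecial-type normal subgroups, that the product $p^d\cdot|G_0|$ stays below $24^{(p^d-1)/3}$ --- including a finite list of small $(d,p)$ that must be checked individually. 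The $\Gamma\operatorname{L}(1,p^d)$ endpoint you compute is fine, but it is the easy residue; the symplectic-type residue is the content of Dixon's Theorem 3 and is missing. As written, your argument is a correct and well-organized reduction of the theorem to that case, not a proof of it.
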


\begin{proof}[Proof of Lemma \ref{poBoundLem}]
Let $H$ be a finite semisimple group. We need to prove that $F(H)\leq|H|^{\t(e)}$, and we do so by induction on $|H|$. The induction base, i.e., the case $|H|=1$, follows from the general $1$-PO-boundedness of $F$.

For the induction step, write $\Soc(H)=S_1^{n_1}\times\cdots\times S_r^{n_r}$, where the $S_i$ are pairwise nonisomorphic nonabelian finite simple groups. View $H$ as a subgroup of $\Aut(\Soc(H))$. Now

\begin{align*}\Aut(\Soc(H)) &= \Aut(S_1^{n_1}\times\cdots\times S_r^{n_r})=\Aut(S_1^{n_1})\times\cdots\times\Aut(S_r^{n_r}) \\ &=(\Aut(S_1)\wr\Sym_{n_1})\times\cdots\times(\Aut(S_r)\wr\Sym_{n_r}) \\ &=(\Aut(S_1)^{n_1}\rtimes\Sym_{n_1})\times\cdots\times(\Aut(S_r)^{n_r}\rtimes\Sym_{n_r}) \\ &=(\Aut(S_1)^{n_1}\times\cdots\times\Aut(S_r)^{n_r})\rtimes(\Sym_{n_1}\times\cdots\times\Sym_{n_r}),\end{align*}

where in the last semidirect product, the action of the $i$-th factor $\Sym_{n_i}$ is by permutation of coordinates on $\Aut(S_i)^{n_i}$, and by the identity on all other $\Aut(S_j)^{n_j}$.

Thus viewing $\Aut(S_1)^{n_1}\times\cdots\times\Aut(S_r)^{n_r}$ as a subgroup of $\Aut(\Soc(H))$ as well, the intersection $K:=H\cap(\Aut(S_1)^{n_1}\times\cdots\times\Aut(S_r)^{n_r})$ is well-defined as another subgroup of $\Aut(\Soc(H))$. Note that $K$ contains $\Soc(H)$ (whence $K$ is semisimple and $\Soc(K)=\Soc(H)$) and that $K$ is characteristic in $H$ (since $\Aut(S_1)^{n_1}\times\cdots\times\Aut(S_r)^{n_r}$ is characteristic in $\Aut(\Soc(H))$ and every automorphism of $H$ extends to an automorphism of $\Aut(\Soc(H))$).

Using the C-submultiplicativity of $F$, we conclude that

\begin{equation}\label{ineq1}
F(H)\leq F(K)\cdot F(H/K).
\end{equation}

In order to facilitate reading, we will now continue to further bound each of the two factors $F(K)$ and $F(H/K)$ separately, starting with $F(K)$:

\begin{align*}
F(K) &\leq F(\Soc(H))\cdot F(K/\Soc(H))\leq F(S_1^{n_1})\cdots F(S_r^{n_r})\cdot |K/\Soc(H)| \\ &\leq|S_1|^{en_1}\cdots|S_r|^{en_r}\cdot|K/\Soc(H)|^{1-\t(e)}\cdot|K/\Soc(H)|^{\t(e)} \\ &\leq|\Soc(H)|^e\cdot|(\Aut(S_1)^{n_1}\times\cdots\times\Aut(S_r)^{n_r})/\Soc(H)|^{1-\t(e)}\cdot|K/\Soc(H)|^{\t(e)} \\ &=|\Soc(H)|^e\cdot(|\Out(S_1)|^{n_1}\cdots|\Out(S_r)|^{n_r})^{1-\t(e)}\cdot|K/\Soc(H)|^{\t(e)} \\ &=|\Soc(H)|^e(|S_1^{n_1}|^{\log_{|S_1|}(|\Out(S_1)|)}\cdots|S_r|^{\log_{|S_r^{n_r}|}(|\Out(S_r)|)})^{1-\t(e)}|K/\Soc(H)|^{\t(e)} \\ &\leq|\Soc(H)|^e\cdot(|S_1^{n_1}|^{\log_{20160}(12)}\cdots|S_r^{n_r}|^{\log_{20160}(12)})^{1-\t(e)}\cdot|K/\Soc(H)|^{\t(e)} \\ &=|\Soc(H)|^{e+\log_{20160}(12)(1-\t(e))}\cdot|K/\Soc(H)|^{\t(e)}.
\end{align*}

Here, we have used, \textit{inter alia}, that $F$ is $e$-PO-bounded on the class of finite nonabelian characteristically simple groups as well as the statement of Theorem \ref{outTheo}. In summary, we have

\begin{equation}\label{ineq2}
F(K)\leq|\Soc(H)|^{e+\log_{20160}(12)(1-\t(e))}\cdot|K/\Soc(H)|^{\t(e)}.
\end{equation}

We next bound $F(H/K)$. First, observe that by one of the isomorphism theorems,

\begin{align*}
H/K &=H/(H\cap(\Aut(S_1)^{n_1}\times\cdots\times\Aut(S_r)^{n_r})) \\ &\cong(\Aut(S_1)^{n_1}\times\cdots\times\Aut(S_r)^{n_r})H/(\Aut(S_1)^{n_1}\times\cdots\times\Aut(S_r)^{n_r}) \\ &\leq\Aut(\Soc(H))/(\Aut(S_1)^{n_1}\times\cdots\times\Aut(S_r)^{n_r})\cong\Sym_{n_1}\times\cdots\times\Sym_{n_r},
\end{align*}

so that we can view $H/K$, and hence also $\Rad(H/K)$, as a subgroup of the product $\Sym_{n_1}\times\cdots\times\Sym_{n_r}$.

By Theorem \ref{dixonTheo}, this allows us to bound the order of $\Rad(H/K)$. Indeed, denoting, for $i=1,\ldots,r$, by $\pi_i:\Sym_{n_1}\times\cdots\times\Sym_{n_r}\rightarrow\Sym_{n_i}$ the $i$-th coordinate projection, we have that

\begin{align*}
|\Rad(H/K)| &\leq|\pi_1[\Rad(H/K)]\times\cdots\times\pi_r[\Rad(H/K)]| \\ &=|\pi_1[\Rad(H/K)]|\cdots|\pi_r[\Rad(H/K)]|\leq(\sqrt[3]{24})^{n_1}\cdots(\sqrt[3]{24})^{n_r}.
\end{align*}

Hence we obtain the following for $F(H/K)$ (applying the induction hypothesis to the semisimple group $(H/K)/\Rad(H/K)$):

\begin{align*}
F(H/K) &\leq F(\Rad(H/K))\cdot F((H/K)/\Rad(H/K)) \\ &\leq|\Rad(H/K)|\cdot|(H/K)/\Rad(H/K)|^{\t(e)}=|\Rad(H/K)|^{1-\t(e)}\cdot|H/K|^{\t(e)} \\ &\leq(\sqrt[3]{24})^{n_1(1-\t(e))}\cdots(\sqrt[3]{24})^{n_r(1-\t(e))}\cdot|H/K|^{\t(e)} \\ &=|S_1|^{\log_{|S_1|}(\sqrt[3]{24})n_1(1-\t(e))}\cdots|S_r|^{\log_{|S_r|}(\sqrt[3]{24})n_r(1-\t(e))}\cdot|H/K|^{\t(e)} \\ &\leq|S_1^{n_1}|^{\frac{1}{3}\log_{60}(24)(1-\t(e))}\cdots|S_r^{n_r}|^{\frac{1}{3}\log_{60}(24)(1-\t(e))}\cdot|H/K|^{\t(e)} \\ &=|\Soc(H)|^{\frac{1}{3}\log_{60}(24)(1-\t(e))}\cdot|H/K|^{\t(e)},
\end{align*}

so in summary,

\begin{equation}\label{ineq3}
F(H/K)\leq|\Soc(H)|^{\frac{1}{3}\log_{60}(24)(1-\t(e))}\cdot|H/K|^{\t(e)}.
\end{equation}

Combining Equations (\ref{ineq1}), (\ref{ineq2}) and (\ref{ineq3}), we conclude that

\begin{align*}
F(H) &\leq|\Soc(H)|^{e+\log_{20160}(12)(1-\t(e))}|K/\Soc(H)|^{\t(e)}|\Soc(H)|^{\frac{1}{3}\log_{60}(24)(1-\t(e))}|H/K|^{\t(e)} \\ &=|\Soc(H)|^{e+(1-\t(e))(\log_{20160}(12)+\frac{1}{3}\log_{60}(24))}\cdot|K/\Soc(H)|^{\t(e)}\cdot|H/K|^{\t(e)} \\ &=|\Soc(H)|^{\t(e)}\cdot|H/\Soc(H)|^{\t(e)}=|H|^{\t(e)},
\end{align*}

where the second-to-last equality is by definition of $\t$.
\end{proof}

\subsection{On \texorpdfstring{$L_3$}{L3}-values of (almost) simple groups}\label{subsec2P5}

The results of this subsection will be used in the proof of Theorem \ref{mainTheo}(3). We begin with the following:

\begin{lemmma}\label{nonShiftLem}
Let $G$ be a finite centerless nonsolvable group. Let $\alpha,\beta,\gamma$ be automorphisms of $G$. Then there exists $g\in G$ such that $\alpha(g)\not=g\beta(g)\gamma(g)$ (i.e., $g\notin\P_3(\alpha\mid\id,\beta,\gamma)$ in the notation of Proposition \ref{shiftProp}).
\end{lemmma}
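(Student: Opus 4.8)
The plan is to argue by contradiction: suppose that for some finite centerless nonsolvable $G$ and some automorphisms $\alpha,\beta,\gamma$ we have $\alpha(g) = g\beta(g)\gamma(g)$ for \emph{all} $g \in G$. The first step is to extract algebraic identities by plugging in products. Replacing $g$ by $gh$ and comparing $\alpha(gh) = \alpha(g)\alpha(h)$ with the defining identity gives a relation among $g,h,\beta(g),\beta(h),\gamma(g),\gamma(h)$; similarly evaluating at $g^{-1}$ and at powers of $g$ yields constraints. The goal of this manipulation is to show that $\beta$ and $\gamma$ are forced to be of a very restricted form — concretely, I expect to derive that $\gamma = \tau_c \circ \beta^{-1}$ or something comparably rigid, and ultimately that $g\beta(g)\gamma(g)$ cannot be an automorphism unless $G$ is (close to) abelian.

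The cleaner route is probably to reduce to the case of small parameters via Proposition \ref{shiftProp} (or its special cases): the hypothesis $g \in \P_3(\alpha\mid\id,\beta,\gamma)$ for all $g$ means $\P_3(\alpha\mid\id,\beta,\gamma) = G$. Setting $h := \id$ in the defining equation at $g = 1$ gives $\alpha(1) = 1 = 1\cdot\beta(1)\cdot\gamma(1)$, which is automatic; more usefully, differentiating the identity as above, one gets that the map $g \mapsto g\beta(g)$ interacts with $\gamma$ in a way that, after absorbing inner automorphisms, lets us assume $\beta = \gamma = \id$ and $\alpha = $ cubing, i.e.\ reduce to: the cubing map $g \mapsto g^3$ is an automorphism of $G$. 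At that point one invokes the classical fact (Macdonald, cited as \cite[Theorem 4.1]{Mac75a} in the introduction, or simply that $x \mapsto x^3$ being an endomorphism forces strong commutativity relations) that a group on which cubing is an automorphism is nilpotent of class at most $2$ — in particular solvable — contradicting nonsolvability of $G$. The centerless hypothesis is used to kill the inner automorphisms introduced during the reduction (if $\tau_c = \id$ then $c \in \zeta G = 1$).

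The main obstacle I anticipate is the reduction step: turning "$g\beta(g)\gamma(g) = \alpha(g)$ identically" into "cubing is an automorphism (up to inner twists)". The twists $\tau_{\beta_1(g)}, \tau_{\beta_2(g)}$ appearing in Proposition \ref{shiftProp} depend on $g$, so one cannot naively conjugate them away uniformly; the argument must instead exploit that the product of these $g$-dependent automorphisms, composed as in the proposition, is \emph{independent} of $g$ (since it equals $\alpha$), which pins down $\beta,\gamma$ up to a single global inner automorphism. Making this precise — showing the only solutions have $\beta,\gamma$ essentially powers of $\id$ conjugated by a central (hence trivial) element — is where the real work lies; once there, the contradiction with nonsolvability via the cubing-automorphism classification is immediate. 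An alternative that sidesteps the reduction is a direct counting/representation-theoretic argument: the composite $g \mapsto g\beta(g)\gamma(g)$ has a "degree" in an appropriate sense that an automorphism cannot match on a nonsolvable (hence, by CFSG-type input, nonabelian-composition-factor-containing) group, but the elementary reduction seems preferable if it can be pushed through.
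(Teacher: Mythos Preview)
The proposal has a genuine gap at precisely the step you flag as ``where the real work lies'': the reduction of the identity $\alpha(g)=g\beta(g)\gamma(g)$ (for all $g$) to the statement that cubing is an automorphism, after absorbing inner twists. This reduction cannot work as stated. Already on the abelian group $\mathbb{Z}/5\mathbb{Z}$, take $\beta$ to be multiplication by $2$, $\gamma$ multiplication by $3$, and $\alpha=\id$; then $\alpha(g)=g\beta(g)\gamma(g)$ holds identically, yet $\alpha$ is not cubing and $\beta,\gamma$ are not inner (there are no nontrivial inner automorphisms). Since your reduction is meant to be a formal manipulation valid before nonsolvability is invoked, this example kills it. Nonsolvability has to enter earlier and more essentially than as a final appeal to ``cubing is an automorphism implies nilpotent''.

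The paper's argument is quite different and attempts no such reduction. It rewrites the assumed identity as an equality of functions $\beta^{-1}\circ\T_{\alpha}=\sh_{\beta^{-1}\circ\gamma}^{(2)}$ and then compares \emph{fibers}: the fibers of the left-hand side are the right cosets $\fix(\alpha)g$, while by Proposition~\ref{shiftTwoProp} the fibers of the right-hand side are the sets $\P_{-1}(\tau_g\circ\beta^{-1}\circ\gamma)g$. Equating fibers gives $\fix(\alpha)=\P_{-1}(\tau_g\circ\beta^{-1}\circ\gamma)$ for \emph{every} $g\in G$. Nonsolvability enters via Rowley's theorem \cite{Row95a} (a finite group admitting a fixed-point-free automorphism is solvable), which forces $\fix(\alpha)$ to contain some $x\neq 1$; centerlessness then supplies a $g$ with $\tau_g(x^{-1})\neq x^{-1}$, contradicting the fiber equality at $g$ versus at $1$. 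The key idea you are missing is this fiber comparison: rather than trying to pin down $\beta$ and $\gamma$ globally, the paper only needs to control what $\tau_g\circ\beta^{-1}\circ\gamma$ does to a single nontrivial fixed point of $\alpha$.
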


\begin{proof}
Assume otherwise. Then $\T_{\alpha}=\beta\circ\sh_{\beta^{-1}\circ\gamma}^{(2)}$, or equivalently, $\beta^{-1}\circ\T_{\alpha}=\sh_{\beta^{-1}\circ\gamma}^{(2)}=:f$. Now let $g\in G$ be arbitrary, but fixed. Since the function $\beta^{-1}\circ\T_{\alpha}$ assumes the value $f(g)$ precisely on the set $\fix(\alpha)g$, and the function $\sh_{\beta^{-1}\circ\gamma}^{(2)}$ by Proposition \ref{shiftTwoProp} assumes this value precisely on the set $\P_{-1}(\tau_g\circ\beta^{-1}\circ\gamma)g$, we conclude that for all $g\in G$, $\fix(\alpha)=\P_{-1}(\tau_g\circ\beta^{-1}\circ\gamma)$. In particular, setting $g:=1$, we find that $\beta^{-1}\circ\gamma$ inverts all fixed points of $\alpha$. Since $G$ is nonsolvable, we can fix, by \cite[Theorem]{Row95a}, a nontrivial fixed point $x$ of $\alpha$, and since $G$ is centerless, there is an element $g\in G$ such that $\tau_g(x^{-1})\not=x^{-1}$. Hence we have $x^{-1}=(\tau_g\circ\beta^{-1}\circ\gamma)(x)=\tau_g(x^{-1})\not=x^{-1}$, a contradiction.
\end{proof}

We note that the statement of Lemma \ref{nonShiftLem} applies in particular to all nonabelian finite simple groups $G$.

The rest of this subsection is dedicated to the proof of the following theorem:

\begin{theoremm}\label{almostSimpleTheo}
For all large enough nonabelian finite simple groups $S$, we have $\L_3(\Aut(S))/|S|\leq|S|^{-0.053}$. In particular, $\L_3(\Aut(S))/|S|\to0$ as $|S|\to\infty$.
\end{theoremm}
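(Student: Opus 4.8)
The plan is to reduce the bound $\L_3(\Aut(S))/|S| \le |S|^{-0.053}$ to a case analysis over the families of nonabelian finite simple groups, using the structural tools already at hand: the coset-counting inequalities of Subsection \ref{subsec2P1} applied to the characteristic subgroup $\Inn(S) \cong S$ of $\Aut(S)$, and the uniform conjugacy-class and outer-automorphism bounds of Subsection \ref{subsec2P3}. The key observation is that for $N := \Inn(S)$ we have $\Aut(S)/N \cong \Out(S)$, so Lemma \ref{lThreeLem} gives, for any $\alpha \in \Aut(\Aut(S)) = \Inn(\Aut(S))$, $\L_3(\alpha) \le [N : \fix(\alpha_{\mid N})] \cdot \L_{-1}(N) \cdot \L_3(\tilde\alpha) \le |S| \cdot \L_{-1}(S) \cdot |\Out(S)|$. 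Dividing by $|S|$, it suffices to show $\L_{-1}(S) \cdot |\Out(S)| \le |S|^{1-0.053} = |S|^{0.947}$ for all large enough $S$. Here I would want the \emph{smaller} bound $\L_{-1}(S) \le \L_{-1}(\Aut(S)) \le |S|^{E_0} = |S|^{0.8817\ldots}$ from Corollary \ref{fulCor}, so that $\L_{-1}(S)\cdot|\Out(S)| \le |S|^{E_0 + \log_{|S|}|\Out(S)|}$, and then invoke Theorem \ref{outTheo}(1), which says $\log_{|S|}|\Out(S)| \to 0$; since $E_0 < 0.947$, for all large enough $S$ the exponent drops below $0.947$, giving the claim. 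The final "in particular" then follows from $|S| \to \infty$, together with the finitely many small simple groups being absorbed into the phrase "for all large enough".

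Concretely, the steps I would carry out are: (i) fix a nonabelian finite simple group $S$ and an arbitrary $\alpha \in \Aut(\Aut(S))$; since $\Aut(S)$ is complete, $\alpha$ is inner, but more to the point we only need $N := \Inn(S)$, which is characteristic in $\Aut(S)$; (ii) apply Lemma \ref{lThreeLem} with this $N$ to get $\L_3(\alpha) \le |S| \cdot \L_{-1}(N) \cdot \L_3(\tilde\alpha)$, using $[N : \fix(\alpha_{\mid N})] \le |N| = |S|$ and $\L_3(\tilde\alpha) \le |\Aut(S)/N| = |\Out(S)|$ (trivially, since $\L_3$ of any automorphism of a group is at most the group's order); (iii) use $\L_{-1}(N) = \L_{-1}(S) \le \L_{-1}(\Aut(S)) \le |S|^{E_0}$ by the last remark of Subsection \ref{subsec1P4} and Corollary \ref{fulCor}; (iv) combine to get $\L_3(\Aut(S)) = \max_\alpha \L_3(\alpha) \le |S|^{1+E_0} \cdot |\Out(S)|$, hence $\L_3(\Aut(S))/|S| \le |S|^{E_0} \cdot |\Out(S)|$; (v) write $|\Out(S)| = |S|^{\log_{|S|}|\Out(S)|}$ and, using Theorem \ref{outTheo}(1), note that for all $S$ with $|S|$ larger than some absolute bound we have $\log_{|S|}|\Out(S)| \le 0.947 - E_0 = 0.065\ldots$, whence $\L_3(\Aut(S))/|S| \le |S|^{E_0 + 0.065} = |S|^{0.947} / |S| = |S|^{-0.053}$. (One should double-check the arithmetic $E_0 + 0.065 < 0.948$ and pick the printed exponent $0.053$ so that it is safely below $1 - E_0 = 0.1183\ldots$; any margin works, and a slightly more careful bookkeeping using Theorem \ref{outTheo}(2) in place of part (1) would even make the bound effective for all $S$, though the statement as given only claims it for large enough $S$.)

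The main obstacle is essentially arithmetic bookkeeping rather than a conceptual hurdle: one must make sure that the crude estimates $[N:\fix(\alpha_{\mid N})] \le |S|$ and $\L_3(\tilde\alpha) \le |\Out(S)|$ are not too lossy. The first is harmless because it is exactly offset by the $|S|$ we divide out; the second is harmless because $|\Out(S)| = |S|^{o(1)}$ by Theorem \ref{outTheo}(1), so it is swallowed by the gap between $E_0 = 0.8817\ldots$ and $1$. The only genuine input beyond the routine manipulation is the combination of Corollary \ref{fulCor} (the nontrivial PO-bound $\L_{-1}(\Aut(S)) \le |S|^{E_0}$, itself resting on the Fulman--Guralnick class-number bound and the smallness of $|\Out(S)|$) with the asymptotic $\log_{|S|}|\Out(S)| \to 0$; I expect the write-up to spend most of its effort pinning down the numerical threshold on $|S|$ beyond which the exponent is below $0.947$, for which one can quote the explicit list of $|\Out(S)|$ from \cite{CCNPW85a} or simply appeal to $|\Out(S)| < \log_2|S|$ from \cite{KohlOut}, which already gives $\log_{|S|}|\Out(S)| < \log_{|S|}\log_2|S| \to 0$ with a completely explicit rate.
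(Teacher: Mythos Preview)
Your argument has a fatal arithmetic slip that hides a genuine gap. From Lemma \ref{lThreeLem} with $N=\Inn(S)$ and the trivial bounds $[N:\fix(\alpha_{\mid N})]\leq|S|$ and $\L_3(\tilde\alpha)\leq|\Out(S)|$, you correctly get $\L_3(\Aut(S))\leq|S|\cdot\L_{-1}(S)\cdot|\Out(S)|$, hence $\L_3(\Aut(S))/|S|\leq\L_{-1}(S)\cdot|\Out(S)|$. But the target is $\L_3(\Aut(S))/|S|\leq|S|^{-0.053}$, so you would need $\L_{-1}(S)\cdot|\Out(S)|\leq|S|^{-0.053}$, which is impossible since $\L_{-1}(S)\geq1$. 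In your step (v) you wrote ``$|S|^{E_0+0.065}=|S|^{0.947}/|S|=|S|^{-0.053}$'', silently dividing by an extra factor of $|S|$ that is not there; and earlier, ``dividing by $|S|$, it suffices to show $\L_{-1}(S)\cdot|\Out(S)|\leq|S|^{1-0.053}$'' conflates the bound for $\L_3(\Aut(S))$ with the bound for $\L_3(\Aut(S))/|S|$. In short, the crude estimate $[N:\fix(\alpha_{\mid N})]\leq|N|$ throws away exactly the factor of $|S|$ you need, and your bound is never below $|S|^{E_0}\gg 1$.

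The paper's proof avoids this by a case split on $|\fix(\alpha_{\mid S})|$. If $|\fix(\alpha_{\mid S})|\geq|S|^{0.685}$, one uses Lemma \ref{lThreeLem} \emph{without} discarding $\fix(\alpha_{\mid N})$, so the factor $[N:\fix]$ is at most $|S|^{0.315}$; if $|\fix(\alpha_{\mid S})|<|S|^{0.685}$, one switches to Lemma \ref{lELem}, which gives $\L_3(\alpha)\leq\k(\Aut(S))\cdot|\fix(\alpha)|$. Crucially, to make the numerics close, the paper does not use the uniform $\k\leq|\cdot|^{0.41}$ bound but the sharper $\k(S)\leq|S|^{0.26}$ from Corollary \ref{lieCor}, valid only for $S\not\cong\A_1(q)$; the family $\A_1(q)$ must then be handled separately by a direct (and lengthy) matrix computation (Theorem \ref{a1Theo}). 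Your proposal misses both the fixed-point dichotomy and the need for a separate treatment of $\A_1(q)$.
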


This implies that $\l_3(T)\to0$ as $|T|\to\infty$ for finite almost simple groups $T$, which in view of Lemma \ref{almostSolvLem} is a weaker form of Theorem \ref{mainTheo}(3). We will see that using what we know so far, the inequality $\L_3(\Aut(S))/|S|\leq|S|^{-0.053}$ can be verified with a rather short argument for all large enough $S$ except for those from the infinite family $\A_1(q)$, $q$ a prime power, which are settled in the following theorem:

\begin{theoremm}\label{a1Theo}
Fix $\epsilon\in\left(0,1/4\right)$. Then for all large enough prime powers $q$, we have $\L_3(\Aut(\A_1(q)))\leq q^{11/4+\epsilon}$.
\end{theoremm}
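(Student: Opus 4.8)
The plan is to fix a prime power $q$ and an automorphism $\alpha$ of $\A_1(q)=\PSL_2(q)$, and bound $\L_3(\alpha)$ by a coset-counting argument along a suitable characteristic normal subgroup, reducing the genuinely hard work to the group $\PSL_2(q)$ itself. Recall that $\Aut(\A_1(q))=\PGL_2(q)\rtimes\Gal$ when $q$ is an odd prime power with $q=p^f$, so $\Aut(\A_1(q))$ has $\PSL_2(q)$ as a characteristic subgroup of index $2f$ (resp.\ $f$ when $q$ is even). Applying Lemma \ref{lThreeLem} with $G=\Aut(\A_1(q))$ and $N=\PSL_2(q)$, we get
\[
\L_3(\alpha)\leq[N:\fix(\alpha_{\mid N})]\cdot\L_{-1}(N)\cdot\L_3(\tilde\alpha),
\]
where $\tilde\alpha$ is the induced automorphism of the quotient $\Aut(\A_1(q))/\PSL_2(q)$, a group of order $O(\log q)$. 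Since $\L_3(\tilde\alpha)\leq|\Aut(\A_1(q))/\PSL_2(q)|=O(\log q)$, and $|\PSL_2(q)|=\Theta(q^3)$, it suffices to show that $[N:\fix(\alpha_{\mid N})]\cdot\L_{-1}(N)=O(q^{11/4+\epsilon/2})$. For the first factor, $[N:\fix(\alpha_{\mid N})]\leq|N|=\Theta(q^3)$ is too weak; instead I would split into cases on the type of $\alpha_{\mid N}$: if $\alpha_{\mid N}$ is an inner automorphism (the only case where $\fix(\alpha_{\mid N})$ could be large), then $\fix(\alpha_{\mid N})=\C_N(g)$ for some $g$, which has order $\Theta(q)$ unless $g$ is central, so $[N:\fix(\alpha_{\mid N})]=\Theta(q^2)$; in that case I instead use the bound $\L_3(\alpha)\leq\L_3(\Aut(\A_1(q)))$ and work directly with $\tau_g$-type automorphisms — but more cleanly, one handles the inner case directly via the $\f_{c,\alpha}$-fiber machinery of Proposition \ref{shiftTwoProp} as in the proof of Lemma \ref{lThreeLem}, which is exactly where the $\L_{-1}$ factor comes from and already absorbs this.

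The crux is bounding $\L_{-1}(N)=\L_{-1}(\PSL_2(q))$, and here is where the exponent $11/4$ must come from: combining the above, we need roughly $\L_{-1}(\PSL_2(q))=O(q^{11/4-3+2+\epsilon/2})=O(q^{7/4+\epsilon/2})$ in the generic case where $[N:\fix(\alpha_{\mid N})]=\Theta(q^2)$, or $\L_{-1}(\PSL_2(q))=O(q^{11/4-3+3+\epsilon/2})$ when $[N:\fix(\alpha_{\mid N})]$ is bounded. The right tool is Corollary \ref{fulCor}, which gives $\L_{-1}(\Aut(S))\leq|S|^{E_0}$ with $E_0=0.8817\ldots$, hence $\L_{-1}(\PSL_2(q))\leq|\PSL_2(q)|^{E_0}=\Theta(q^{3E_0})=\Theta(q^{2.645\ldots})$. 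That is already below $q^{11/4}=q^{2.75}$, and when multiplied by a bounded index factor and the $O(\log q)$ quotient contribution, we stay below $q^{11/4+\epsilon}$ for all large enough $q$ (the $\log q$ factor is swallowed by $q^{\epsilon}$). In the generic case the index factor $\Theta(q^2)$ is dangerous: $q^2\cdot q^{2.645}=q^{4.645}$ is far too large, so there one cannot afford Corollary \ref{fulCor} alone — instead, when $\fix(\alpha_{\mid N})$ is large (order $\Theta(q)$), the set $K$ in the proof of Lemma \ref{lThreeLem} lives inside few cosets and $\L_{-1}$ enters only through $\L_{-1}((\tau_n\circ\tau_g)_{\mid N})=|\sqrt{\cdot}|$ for a single element, which by Proposition \ref{sqrtProp}(1) has size $|\{r\in\PSL_2(q)\mid r^2=\text{fixed}\}|=O(q)$ since the fibers of the squaring map on $\PSL_2(q)$ have size $O(q)$ (an element has $O(q)$ square roots: this is a routine fact about $\PSL_2(q)$, provable by examining its character table and using Theorem \ref{characterTheo}, or directly). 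Then the total is $[N:\fix]\cdot O(q)\cdot O(\log q)$; but $[N:\fix]$ here is $\Theta(q^2)$ only when $\fix$ is a full centralizer of a non-central element, in which case the coset structure restricts $K$ further. This cross-case bookkeeping — trading off the size of $[N:\fix(\alpha_{\mid N})]$ against the size of $\L_{-1}$ of the relevant subgroup/coset — is the main obstacle.

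Concretely, I would organize the proof as follows. First, recall the structure of $\Aut(\A_1(q))$ and reduce via Lemma \ref{lThreeLem} to bounding, over all $\alpha$, the quantity $[N:\fix(\alpha_{\mid N})]\cdot\L_{-1}(N)\cdot\L_3(\tilde\alpha)$ with $N=\PSL_2(q)$. Second, note $\L_3(\tilde\alpha)\leq|\Aut/\PSL_2(q)|\leq 2f=O(\log q)$. Third, establish the pointwise square-root bound $\maxsqrt(\PSL_2(q))=O(q)$ — directly, or by Theorem \ref{characterTheo} together with $\k(\PSL_2(q))=\Theta(q)$ and $|\chi(g)|$ being bounded for the relevant classes. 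Fourth, case-split on $\alpha_{\mid N}$: (a) if $\fix(\alpha_{\mid N})$ is trivial or of bounded order, use $[N:\fix(\alpha_{\mid N})]=\Theta(q^3)$ together with — not $\L_{-1}(N)$ directly, but the finer observation from the proof of Lemma \ref{lThreeLem} that the relevant $\L_{-1}$ value is the number of square roots of a single element, hence $O(q)$, giving total $O(q^3)\cdot O(q^{-1})$?? — no: re-examining, when $[N:\fix]=\Theta(q^3)$ one already has $\L_3(\alpha)=O(q^3\cdot q\cdot\log q/q^3)$ only if the $\L_{-1}$ really is the single-element version, so I must be careful to invoke precisely the inequality $\L_{-1}(N)\geq|K|/[N:\fix(\alpha_{\mid N})]$ from that proof rather than $\L_{-1}(N)$ as a black box, which yields $|K|\leq[N:\fix]\cdot\maxsqrt(N)=O(q^3)\cdot O(q)$ — still not good enough, confirming that the honest route is: $\L_3(\alpha)=\sum_C|C\cap\P_3(\alpha)|\leq\L_3(\tilde\alpha)\cdot\max_C|C\cap\P_3(\alpha)|$ and $|C\cap\P_3(\alpha)|\leq[N:\fix(\alpha_{\mid N})]\cdot\maxsqrt(N)$, giving $\L_3(\alpha)=O(\log q)\cdot[N:\fix(\alpha_{\mid N})]\cdot O(q)$, so we need $[N:\fix(\alpha_{\mid N})]=O(q^{7/4+\epsilon/2})$. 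That fails for regular $\alpha$, so case (a) must instead be handled by the competing bound $\L_3(\alpha)\leq\k(\Aut(\A_1(q)))\cdot|\fix(\alpha)|=O(q)\cdot|\fix(\alpha_{\mid N})|\cdot O(\log q)$ from Lemma \ref{lELem} (using $\k(\Aut(\A_1(q)))=O(q)$ from Theorem \ref{fulTheo} or Theorem \ref{lieTheo}), which is $O(q^{11/4})$ precisely when $|\fix(\alpha_{\mid N})|=O(q^{7/4})$. So the final structure is: take the minimum of the Lemma \ref{lELem} bound $\k\cdot|\fix|$ and the Lemma \ref{lThreeLem} bound $[N:\fix]\cdot\maxsqrt(N)\cdot\L_3(\tilde\alpha)$; since $|\fix(\alpha_{\mid N})|\cdot[N:\fix(\alpha_{\mid N})]=|N|=\Theta(q^3)$, one of the two factors is $O(q^{3/2})$, and plugging in $\k(\Aut)=O(q^{1+o(1)})$, $\maxsqrt(N)=O(q)$, $\L_3(\tilde\alpha)=O(q^{o(1)})$ gives $\L_3(\alpha)=O(q^{1+3/2+o(1)})=O(q^{5/2+o(1)})\leq q^{11/4+\epsilon}$ for large $q$. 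The main obstacle is proving $\maxsqrt(\PSL_2(q))=O(q)$ cleanly and verifying the numerics $\k(\Aut(\A_1(q)))=O(q^{1+\epsilon/2})$ uniformly; both follow from the cited results but require care with the twisted/even-characteristic cases.
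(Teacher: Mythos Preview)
Your approach has a fatal numerical error that causes the whole argument to collapse. You claim $\maxsqrt(\PSL_2(q))=O(q)$, but this is false: already $|\sqrt{1}|$ equals one plus the number of involutions in $\PSL_2(q)$, and a single conjugacy class of involutions has size $\Theta(q^2)$. (Your suggested justification via Theorem~\ref{characterTheo} fails at $g=1$, where $|\chi(1)|=\chi(1)\approx q$ for $\Theta(q)$ characters.) Hence $\L_{-1}(\PSL_2(q))\geq\Theta(q^2)$, and the factor $\L_{-1}(N)$ in Lemma~\ref{lThreeLem} cannot be taken smaller than that. Plugging this into your two competing bounds with $F:=|\fix(\alpha_{\mid N})|$, one gets from Lemma~\ref{lELem} an upper bound of order $q^{1+o(1)}F$, and from Lemma~\ref{lThreeLem} an upper bound of order $(q^3/F)\cdot q^2\cdot q^{o(1)}=q^{5+o(1)}/F$. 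These balance at $F=q^2$, where both are $q^{3+o(1)}$ --- the trivial bound. Even replacing $q^2$ by the Corollary~\ref{fulCor} bound $|S|^{E_0}\approx q^{2.645}$ only makes things worse. So the min-of-two-bounds strategy, which does work for all other simple groups in the proof of Theorem~\ref{almostSimpleTheo}, genuinely fails for $\A_1(q)$; the culprit is that $\k(\A_1(q))=\Theta(q)=\Theta(|\A_1(q)|^{1/3})$ is too large (compare Corollary~\ref{lieCor}, which explicitly excludes $\A_1(q)$).

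The paper's proof abandons the general machinery entirely for this family and works by hand. One uses that $\Aut(\A_1(q))=\PGL_2(q)\rtimes\Gal(\F_q/\F_p)$ is complete, fixes $\alpha$ as conjugation by an explicit element, writes a candidate $\beta$ in normalized matrix coordinates $(e,f,g,h,\psi)$, and expands the equation $\alpha(\beta)=\beta^3$ entrywise. After reducing modulo $\PGL_2(q)$ (forcing $\psi^2=\id$) and disposing of a few degenerate types of $\beta$ contributing at most $O(q^{5/2})$ elements, one fixes $f,g,h,\psi$ and regards the entrywise equation as a polynomial equation in $e$ over $\F_q$. The point is that this polynomial has degree at most $p^L+p^M+2$ (where $\sigma=\Frob^L$, $\psi=\Frob^M$), which is at most $q^{3/4+\epsilon/3}$ when $L\leq\tfrac{3}{4}K$; when $L>\tfrac{3}{4}K$, the polynomial is lacunary of a specific shape, and a separate lemma (Lemma~\ref{lacunaryLem}) uses a Frobenius twist and reduction of exponents modulo $q-1$ to show it still has at most $q^{3/4+\epsilon}$ roots in $\F_q$. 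Summing over the $O(q^2)$ choices of $(f,g,h,\psi)$ gives $q^{11/4+\epsilon}$. The key idea you are missing is this direct polynomial-root-counting over $\F_q$, together with the lacunarity lemma to handle large Frobenius twists.
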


For proving Theorem \ref{a1Theo}, we require the following technical lemma, which gives, for polynomials $P(X)\in\mathbb{F}_q[X]$ satisfying a lacunarity condition of a special kind, an upper bound on the number of roots of $P(X)$ in $\mathbb{F}_q$ which is better than the trivial bound, $\deg(P(X))$.

\begin{lemmma}\label{lacunaryLem}
Let $q=p^K$ be a prime power, $L\in\mathbb{Z}$ with $\frac{3}{4}K\leq L<K$, and $0<\epsilon<\frac{1}{4}$. Furthermore, let $P(X)\in\mathbb{F}_q[X]$, and assume that $P(X)=P_1(X)+P_2(X)$, where $\deg(P_1(X))\leq q^{1/2+\epsilon}$, and $\deg(P_2(X))\leq q^{L/K}+q^{1/2+\epsilon}-1<q$, but $\mindeg(P_2(X))\geq p^L=q^{L/K}$. Then there exists a nonzero polynomial $Q(X)\in\mathbb{F}_q[X]$ of degree at most $q^{3/4+\epsilon}$ such that for all $x\in\mathbb{F}_q$, $x$ is a root of $P(X)$ if and only if it is a root of $Q(X)$. In particular, $P(X)$ has at most $q^{3/4+\epsilon}$ roots in $\mathbb{F}_q$.
\end{lemmma}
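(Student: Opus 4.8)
The plan is to push the high-degree part $P_2(X)$ down to low degree by raising $P(X)$ to a suitable Frobenius power and exploiting that $x^q=x$ for all $x\in\F_q$ (where $q=p^K$), together with the characteristic-$p$ identity $(a+b)^{p^m}=a^{p^m}+b^{p^m}$ and the fact that $a\mapsto a^{p^m}$ is a $0$-preserving bijection of $\F_q$. First I would record that the asserted bound on the number of roots forces $P(X)\neq 0$ (since $q^{3/4+\epsilon}<q$), so I may assume this; and that if $P_2(X)=0$ the claim is immediate with $Q(X):=P(X)=P_1(X)$, so I may also assume $P_2(X)\neq 0$. The hypotheses $\mindeg(P_2(X))\geq p^L$ and $\deg(P_2(X))\leq p^L+q^{1/2+\epsilon}-1$ then allow me to factor $P_2(X)=X^{p^L}R(X)$ with $R(X)\in\F_q[X]$ nonzero and $\deg(R(X))\leq q^{1/2+\epsilon}-1$.

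Writing $m:=K-L$, so that $1\leq m\leq K/4$ by the hypothesis $\tfrac34 K\leq L<K$ (hence $p^m=q^{m/K}\leq q^{1/4}$), I would then set
\[
Q(X):=P_1(X)^{p^m}+X\cdot R(X)^{p^m}.
\]
The crucial point is the polynomial identity
\[
P(X)^{p^m}=P_1(X)^{p^m}+X^{p^{L+m}}R(X)^{p^m}=P_1(X)^{p^m}+X^{q}R(X)^{p^m},
\]
so that $P(X)^{p^m}-Q(X)=(X^q-X)R(X)^{p^m}$ vanishes at every point of $\F_q$. Consequently $Q(x)=P(x)^{p^m}$ for all $x\in\F_q$, and since $a\mapsto a^{p^m}$ permutes $\F_q$ and fixes $0$, the elements of $\F_q$ which are roots of $P(X)$ coincide with those which are roots of $Q(X)$.

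It then only remains to check the degree bound and that $Q(X)\neq 0$. For the degree, $\deg(P_1(X)^{p^m})=p^m\deg(P_1(X))\leq q^{1/4}\cdot q^{1/2+\epsilon}=q^{3/4+\epsilon}$ and $\deg(X\,R(X)^{p^m})=1+p^m\deg(R(X))\leq 1+q^{1/4}(q^{1/2+\epsilon}-1)\leq q^{3/4+\epsilon}$, the last step using $q^{1/4}\geq 1$; hence $\deg(Q(X))\leq q^{3/4+\epsilon}$. For $Q(X)\neq 0$, note that since $m\geq 1$ every exponent occurring in $P_1(X)^{p^m}$ is a multiple of $p^m\geq 2$, whereas every exponent occurring in $X\,R(X)^{p^m}$ is congruent to $1$ modulo $p^m$; these supports are disjoint, so the two summands of $Q(X)$ cannot cancel, and $Q(X)=0$ would force $P_1(X)=R(X)=0$, i.e.\ $P_2(X)=0$, contrary to assumption. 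Since a nonzero polynomial of degree at most $q^{3/4+\epsilon}$ has at most $q^{3/4+\epsilon}$ roots, $P(X)$ has at most $q^{3/4+\epsilon}$ roots in $\F_q$. The proof is short and I foresee no serious obstacle; the step demanding the most care is the nonvanishing of $Q(X)$, which is precisely where the hypothesis $L<K$ (giving $m\geq 1$, $p^m\geq 2$) is used, just as $L\geq\tfrac34 K$ is exactly what keeps $\deg(Q(X))$ at or below $q^{3/4+\epsilon}$.
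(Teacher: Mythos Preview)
Your proof is correct and follows essentially the same strategy as the paper: raise $P(X)$ to the $p^{K-L}$-th power and then use $x^q=x$ on $\F_q$ to collapse the high-degree part, arriving at the very same polynomial $Q(X)=P_1(X)^{p^{K-L}}+X\,R(X)^{p^{K-L}}$ (the paper phrases this as applying $\Frob^{K-L}$ and then reducing the exponents of the resulting $\tilde{P_2}$ modulo $q-1$). The only substantive difference is cosmetic: you verify $Q\neq 0$ via the disjointness of the monomial supports modulo $p^{K-L}$, whereas the paper observes instead that $Q$ has fewer than $q$ roots in $\F_q$ because $P$ does.
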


\begin{proof}
Denote by $\Frob$ the Frobenius endomorphism of the ring $\mathbb{F}_q[X]$. Set $\tilde{P_i}(X):=\Frob^{K-L}(P_i(X))$ for $i=1,2$, and let $\tilde{P}(X):=\tilde{P_1}(X)+\tilde{P_2}(X)=\Frob^{K-L}(P(X))$. Observe that for all $x\in\mathbb{F}_q$, $P(x)=0$ if and only if $\tilde{P}(x)=0$. Furthermore, we have $\deg(\tilde{P_1}(X))\leq q^{1/2+\epsilon}\cdot q^{(K-L)/K}\leq q^{3/4+\epsilon}$, and since the degrees of the nonzero monomials of $P_2(X)$ are of the form $q^{L/K}+e$ with $e\in\left[0,q^{1/2+\epsilon}-1\right]$, we find that the degrees of the nonzero monomials of $\tilde{P_2}(X)$ are of the form $q+\tilde{e}$ with $\tilde{e}\in\left[0,q^{3/4+\epsilon}-q^{(K-L)/K}\right]$. Let $T(X)\in\mathbb{F}_q[X]$ be the polynomial obtained from $\tilde{P_2}(X)$ by reducing the exponent of $X$ in each monomial modulo $q-1$ (i.e., by subtracting $q-1$). Then $\deg(T(X))\leq 1+(q^{3/4+\epsilon}-q^{(K-L)/K})<q^{3/4+\epsilon}$, and since the identity $x^q=x$ holds in $\mathbb{F}_q$, we have $T(x)=\tilde{P_2}(X)$ for all $x\in\mathbb{F}_q$. Hence, setting $Q(X):=\tilde{P_1}(X)+T(X)$, we find that $Q(X)$ has the required properties; note that $Q(X)$ is nonzero since it has only $\deg(P(X))<q$ many roots in $\mathbb{F}_q$.
\end{proof}

\begin{proof}[Proof of Theorem \ref{a1Theo}]
Write $q=p^K$, where $p$ is a prime and $K\in\mathbb{N}^+$. Denote by $\Frob$ the Frobenius automorphism of the field $\F_q$. We know that $\Aut(\A_1(q))=\Aut(\PSL_2(q))=\PGL_2(q)\rtimes\Gal(\F_q/\F_p)$, and that it is a complete group. We think of the elements of $\PGL_2(q)$ as represented by $(2\times 2)$-matrices over $\F_q$ such that either the bottom right entry is $1$ or the bottom right entry is $0$ and the top right entry is $1$ (\enquote{normalized form}). For $U\in\GL_2(q)$ (not necessarily normalized), we denote by $\overline{U}$ the image of $U$ under the canonical projection $\GL_2(q)\rightarrow\PGL_2(q)$.

Fix an (inner) automorphism $\alpha$ of $\Aut(\A_1(q))$, say the conjugation by the element $\overline{\begin{pmatrix}a & b \\ c & d\end{pmatrix}}\sigma$, where $\sigma=\Frob^L$ with $L\in\{0,1,\ldots,K-1\}$ and the coefficients $a,b,c,d$ are such that the matrix is normalized. We want to show that for large enough $q$, the number of elements $\beta=\beta(e,f,g,h,\psi)=\overline{\begin{pmatrix}e & f \\ g & h\end{pmatrix}}\psi\in\Aut(\A_1(q))$, say with $\psi=\Frob^M$ and $e,f,g,h$ such that the matrix is normalized, that are cubed by $\alpha$ (call such elements \enquote{good}) is bounded from above by $q^{11/4+\epsilon}$.

Observe first that by considering the equation $\alpha(\beta(e,f,g,h,\psi))=\beta(e,f,g,h,\psi)^3$ in $\Aut(\A_1(q))$ modulo the characteristic subgroup $\PGL_2(q)$, we find that a necessary condition for goodness of $\beta$ is that $\psi^3=\psi$, or equivalently $\psi^2=\id$. This leaves at most two possibilities for $\psi$: The identity automorphism of $\F_q$, and if $2\mid K$ (i.e., if $q$ is a square) the unique element of order $2$ in $\Gal(\F_q/\F_p)$, $\Frob^{K/2}$. Henceforth, we will always assume that $\psi^2=\id$.

Easy computations reveal that

\begin{align}\label{cubeEq}
\notag\beta^3= &\overline{\left(\begin{matrix}e^2\psi(e)+ef\psi(g)+eg\psi(f)+fg\psi(h) \\ eg\psi(e)+eh\psi(g)+g^2\psi(f)+gh\psi(h)\end{matrix}\right.} \\ &\overline{\left.\begin{matrix} ef\psi(e)+f^2\psi(g)+eh\psi(f)+fh\psi(h) \\ fg\psi(e)+fh\psi(g)+gh\psi(f)+h^2\psi(h)\end{matrix}\right)}\psi
\end{align}

(note that the matrix is broken over two lines, with the first column in the first line and the second column in the second line) and

\begin{align}\label{alphaEq}
\notag\alpha(\beta)= &\overline{\left(\begin{matrix}a\sigma(e)\psi(d)+b\sigma(g)\psi(d)-a\sigma(f)\psi(c)-b\sigma(h)\psi(c) \\ c\sigma(e)\psi(d)+d\sigma(g)\psi(d)-c\sigma(f)\psi(c)-d\sigma(h)\psi(c)\end{matrix}\right.} \\ &\overline{\left.\begin{matrix} -a\sigma(a)\psi(b)-b\sigma(g)\psi(b)+a\sigma(f)\psi(a)+b\sigma(h)\psi(a) \\ -c\sigma(e)\psi(b)-d\sigma(g)\psi(b)+c\sigma(f)\psi(a)+d\sigma(h)\psi(a)\end{matrix}\right)}\psi.
\end{align}

Note that the matrices appearing under the overlines on the right-hand sides of Equations (\ref{cubeEq}) and (\ref{alphaEq}) are in general not normalized. In order to bound the number of good elements, we partition the elements $\beta(e,f,g,h,\psi)$ of $\Aut(\A_1(q))$ such that $\psi^2=\id$ into several types (the idea being to exclude some non-generic cases from the main argument):

\begin{enumerate}
\item Type: $f=0$ or $g=0$. By our concept of normalized form, the assumption implies that $h=1$, so there are at most $4q^2$ such elements in total (at most $2q^2$ for each of the two cases $f=0$ resp. $g=0$, where the factor $2$ comes from the two choices for $\psi$, and the factor $q^2$ is an upper bound for the number of choices for $(e,g)$ resp.~$(e,f$)). In particular, there are at most $4q^2$ good elements of that type.

\item Type: $f,g\not=0$ and $fg\psi(e)+fh\psi(g)+gh\psi(f)+h^2\psi(h)=0$. Thinking of $f,g,h,\psi$ as fixed, the last assumption becomes a nonzero polynomial equation in $e$ of degree $p^M\leq q^{1/2}$, so for at most $q^{1/2}$ values of $e$, the resulting element $\beta$ is of this type. Hence there are at most $2(q^2+q)q^{1/2}$ elements of that type in total, in particular at most that many good elements of that type.

\item Type: $f,g\not=0$ and $fg\psi(e)+fh\psi(g)+gh\psi(f)+h^2\psi(h)\not=0$. Note that if $\beta$ is to be a good element of that type, it follows that $-c\sigma(e)\psi(b)-d\sigma(g)\psi(b)+c\sigma(f)\psi(a)+d\sigma(h)\psi(a)\not=0$ as well. This allows us to normalize the matrices occurring on the right-hand sides of Equations (\ref{cubeEq}) and (\ref{alphaEq}) by dividing through the bottom right entry. We may then compare the top left entries of the normalized matrices to obtain the following necessary condition for goodness of $\beta$:

\begin{align}\label{ePoly}
\notag &(e^2\psi(e)+ef\psi(g)+eg\psi(f)+fg\psi(h)) \\ &\cdot(-c\sigma(e)\psi(b)-d\sigma(g)\psi(b)+c\sigma(f)\psi(a)+d\sigma(h)\psi(a)) \notag \\ &=(fg\psi(e)+fh\psi(g)+gh\psi(f)+h^2\psi(h)) \notag \\ &\cdot(a\sigma(e)\psi(d)+b\sigma(g)\psi(d)-a\sigma(f)\psi(c)-b\sigma(h)\psi(c)).
\end{align}

We will now bound the number of elements $\beta$ such that Equation (\ref{ePoly}) holds. To this end, we make a case distinction:

\begin{enumerate}
\item Case: $b,c\not=0$. View $f,g,h,\psi$ as fixed. We want to bound the number of $e\in\mathbb{F}_q$ such that Equation (\ref{ePoly}) holds. It is easy to check by the assumptions that Equation (\ref{ePoly}) is a polynomial equation in $e$ of degree precisely $p^L+p^M+2$. We distinguish two subcases:

\begin{itemize}
\item Subcase: $L\leq\frac{3}{4}K$. Then Equation (\ref{ePoly}) is a nonzero polynomial equation in $e$ of degree at most $q^{3/4}+q^{1/2}+2\leq q^{3/4+\epsilon/3}$ for $q$ large enough. Hence the number of good elements of Type 3 is bounded from above by $2(q^2+q)q^{3/4+\epsilon/3}\leq q^{11/4+\epsilon/2}$ for $q$ large enough, and so the total number of good elements is, still for large enough $q$, bounded from above by $4q^2+2(q^2+q)q^{1/2}+q^{11/4+\epsilon/2}\leq q^{11/4+\epsilon}$, as required.

\item Subcase: $L>\frac{3}{4}K$. Note that the nonzero $e$-monomials occurring in the polynomial Equation (\ref{ePoly}) have degrees among the following numbers: $p^L+p^M+2$, $p^L+p^M$, $p^L+1$, $p^L$, $p^M+2$, $p^M$, $1$ and $0$. Hence for large enough $q$, we find that Equation (\ref{ePoly}) is equivalent to a condition of the form $P(e)=0$, where $P(X)\in\mathbb{F}_q[X]$ depends on $f,g,h,\psi$ and satisfies the lacunarity assumptions of Lemma \ref{lacunaryLem} with $\epsilon$ replaced by $\epsilon/3$. Hence by an application of Lemma \ref{lacunaryLem}, we can conclude just as in the first subcase.
\end{itemize}

\item Case: $b=0$ or $c=0$. We note that by our concept of normalization, the case assumption implies that $a\not=0$ and $d=1$. Observe also that it implies that the first summand $-c\sigma(e)\psi(b)$ of the second factor on the left-hand side of Equation (\ref{ePoly}) vanishes, turning the factor into a constant $e$-polynomial distinct from $0$, so that the $e$-polynomial on the left-hand side of Equation (\ref{ePoly}) now only has degree $p^M+2$, whereas the polynomial on the right-hand side has degree $p^M+p^L$. Hence if $L>1$, and thus $p^M+p^L>p^M+2$, then Equation (\ref{ePoly}) is equivalent to a condition of the form $P(e)=0$, where $P(X)\in\mathbb{F}_q[X]$ is of degree $p^M+p^L$, and we can conclude as in Case (a) (distinguishing between the subcases $L\leq\frac{3}{4}K$ and $L>\frac{3}{4}K$).

It remains to discuss the two cases $L=0$ and $L=1$. Note that we are done once we have shown that for each choice of $f,g,h,\psi$, there are, for large enough $q$, at most $q^{3/4+\epsilon/3}$ many choices of $e$ such that Equation (\ref{ePoly}) holds. For $L=0$, Equation (\ref{ePoly}) is a polynomial equation in $e$ of degree $p^M+2\leq q^{1/2+\epsilon/2}$ for $q$ large enough, whence we are done. For $L=1$, which implies that $K\geq 2$, and assuming $p>2$, Equation (\ref{ePoly}) is polynomial in $e$ of degree $p^M+p\leq 2q^{1/2}\leq q^{1/2+\epsilon/3}$ for $q$ large enough, whence we are also done.

Let us now discuss the final case, $L=1$ and $p=2$. Then both sides of Equation (\ref{ePoly}) are polynomials in $e$ of degree $2^M+2\leq q^{1/2+\epsilon/3}$ for $q=2^K$ large enough, so all that we need to show is that for each choice of $f,g,h,\psi$, there always is at least one $e$-monomial that does not cancel when subtracting the monomials of one side of Equation (\ref{ePoly}) from the other. This is certainly true when the leading coefficients are distinct, so we assume that they are equal. Furthermore, note that if $fh\psi(g)+gh\psi(f)+h^2\psi(h)\not=0$, then the RHS of Equation (\ref{ePoly}) has an $e$-monomial of degree $2$, which the LHS does not have, and we are done. Hence assume $fh\psi(g)+gh\psi(f)+h^2\psi(h)=0$. Then if $h\not=0$, the LHS has a constant $e$-monomial, but the RHS does not have such a monomial, whence we may even assume that $h=0$. Henceforth, we assume additionally that $b=0$; the case $c=0$ works analogously. Note that under this additional assumption, by comparing the leading coefficients of the two sides of Equation (\ref{ePoly}), we obtain $cf^2a^{2^M}=fga$, which implies that $c\not=0$. Moreover, we know that $M\in\{0,\frac{K}{2}\}$, but if $M=\frac{K}{2}$, then the LHS of Equation (\ref{ePoly}), in contrast to the RHS, does not have an $e$-monomial of degree $2^M$. Therefore, we may assume $M=0$ from now on. Then the coefficient of the $e$-monomial of degree $1$ on the LHS of Equation (\ref{ePoly}) equals $(fg+gf)\cdot c\sigma(f)\psi(a)=0$, and so the $e$-monomial of degree $1$ on the RHS does not cancel, which concludes the proof.
\end{enumerate}
\end{enumerate}
\end{proof}

\begin{proof}[Proof of Theorem \ref{almostSimpleTheo}]
By Theorem \ref{a1Theo} and observing that $|\A_1(q)|=\Theta(q^3)$ as $q\to\infty$, we may assume that $S$ is not isomorphic with any $\A_1(q)$, $q$ a prime power. Then by Theorem \ref{outTheo}(1) and Corollary \ref{lieCor}, we have $|\Out(S)|\leq|S|^{0.001}$ and $\k(S)\leq|S|^{0.26}$, provided that $S$ is large enough. Fix an automorphism $\alpha$ of $\Aut(S)$ such that $\L_3(\alpha)=\L_3(\Aut(S))$. Distinguish two cases:

\begin{enumerate}
\item Case: $|\fix(\alpha_{\mid S})|\geq |S|^{0.685}$. Then we have

\begin{align*}
\frac{\L_3(\Aut(S))}{|S|} &=\frac{\L_3(\alpha)}{|S|}\leq\frac{\L_3(\Out(S))\cdot\L_{-1}(S)}{|\fix(\alpha_{\mid S})|}\leq\frac{|\Out(S)|}{|\fix(\alpha_{\mid S})|}\cdot\L_{-1}(\Aut(S)) \\ &\leq\frac{|\Out(S)|}{|\fix(\alpha_{\mid S})|}\cdot\sqrt{\k(\Aut(S))\cdot|\Aut(S)|} \\ &\leq\frac{|\Out(S)|}{|\fix(\alpha_{\mid S})|}\cdot\sqrt{\k(S)}\cdot|\Out(S)|\cdot\sqrt{|S|} \\ &\leq|S|^{0.001+0.13+0.001+0.5-0.685}
=|S|^{-0.053}.\end{align*}

\item Case: $|\fix(\alpha_{\mid S})|<|S|^{0.685}$. Then $|\fix(\alpha)|\leq|\fix(\alpha_{\mid S})|\cdot|\Out(S)|<|S|^{0.686}$. It follows that

\begin{align*}
\L_3(\Aut(S)) &\leq\k(\Aut(S))\cdot|\fix(\alpha)|\leq\k(S)\cdot|\Out(S)|\cdot|\fix(\alpha)| \\ &<|S|^{0.26+0.001+0.686}=|S|^{0.947},
\end{align*}

and so $\L_3(\Aut(S))/|S|<|S|^{-0.053}$.
\end{enumerate}
\end{proof}

\subsection{\texorpdfstring{$\L_3$}{L3}-values of finite semisimple groups and coordinate dependencies}\label{subsec2P6}

We begin by fixing the meaning of some variables. Let $H$ be a finite semisimple group with characteristically simple socle, say $\Soc(H)=S^n$ for a nonabelian finite simple group $S$. Viewing, just as in the proof of Lemma \ref{poBoundLem}, $H$ as a subgroup of $\Aut(\Soc(H))=\Aut(S)^n\rtimes\Sym_n$, we set $K:=H\cap\Aut(S)^n$. Note that $S^n\leq K$, and that elements of $H$ lie in the same coset of $K$ if and only if their images under the canonical projection $\pi:\Aut(\Soc(H))\rightarrow\Sym_n$ are equal. Fix an automorphism $\alpha$ of $H$, which is already determined by its restriction to $\Soc(H)=S^n$, so that we can write (identifying $\alpha$ with that restriction) $\alpha=(\alpha_1\times\cdots\times\alpha_n)\circ\sigma_{\alpha}$, where $\alpha_i$ is an automorphism of $S$ for $i=1,\ldots,n$, and $\sigma_{\alpha}$ is a coordinate permutation on $S^n$ identified with an element of $\Sym_n$.

Now assume that $\beta\in H$ is cubed by $\alpha$. Just like $\alpha$, write $\beta=(\beta_1\times\cdots\times\beta_n)\circ\sigma_{\beta}$. Our goal in this subsection is to establish an upper bound on the number of elements in the coset $K\beta$ that are cubed by $\alpha$ based on the cycle structures of $\sigma_{\alpha}$ and $\sigma_{\beta}$. This will be used in the proof of Theorem \ref{mainTheo}(3) in a $K$-coset-wise counting argument.

To this end, set $K_{\beta}:=\{k\in K\mid \alpha(k\beta)=(k\beta)^3\}$. An application of Corollary \ref{shiftCor} with $e:=3$ yields:

\begin{propposition}\label{coordinateProp}
Let $k=(k_1,\ldots,k_n)\in K\subseteq\Aut(S)^n$. Then $k\in K_{\beta}$ if and only if $\alpha(k)=k\beta(k)\beta^2(k)$, i.e., if and only if for all $i=1,\ldots,n$, we have

\begin{equation}\label{coordinateEq}
\alpha_i(k_{\sigma_{\alpha}^{-1}(i)})=k_i\cdot\beta_i(k_{\sigma_{\beta}^{-1}(i)})\cdot(\beta_i\circ\beta_{\sigma_{\beta^{-1}(i)}})(k_{\sigma_{\beta}^{-2}(i)}).
\end{equation}\qed
\end{propposition}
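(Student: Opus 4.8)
The plan is to obtain this as a direct instance of Corollary \ref{shiftCor}. The first step is to record that $K\cha H$: this is exactly the observation used in the proof of Lemma \ref{poBoundLem} (here in the special case $r=1$), namely that $\Aut(S)^n$ is the kernel of the action of $\Aut(\Soc(H))=\Aut(S)^n\rtimes\Sym_n$ on the set of minimal normal subgroups of $\Soc(H)$, hence characteristic in the complete group $\Aut(\Soc(H))$, so that (since every automorphism of $H$ extends to one of $\Aut(\Soc(H))$) the subgroup $K=H\cap\Aut(S)^n$ is characteristic in $H$. Since $\beta\in\P_3(\alpha)$ by assumption, Corollary \ref{shiftCor} applied with $G:=H$, $N:=K$, $e:=3$ and $g:=\beta$ tells us that for $k\in K$ we have $k\beta\in\P_3(\alpha)$ — i.e.\ $k\in K_{\beta}$ — if and only if $\alpha(k)=\sh_{\tau_{\beta}}^{(3)}(k)=k\cdot\tau_{\beta}(k)\cdot\tau_{\beta}^2(k)$. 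Interpreting $\beta(k)$ and $\beta^2(k)$ in the statement as the conjugates $\tau_{\beta}(k)=\beta k\beta^{-1}$ and $\tau_{\beta}^2(k)=\beta^2 k\beta^{-2}$ inside $\Aut(\Soc(H))$, this is precisely the first claimed equivalence.

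It then remains to translate the identity $\alpha(k)=k\cdot\tau_{\beta}(k)\cdot\tau_{\beta}^2(k)$ into coordinates. Here I would use that, $\Aut(\Soc(H))$ being complete, $\alpha$ acts on $K$ as conjugation by the element $(\alpha_1,\dots,\alpha_n)\sigma_{\alpha}\in\Aut(S)^n\rtimes\Sym_n$, and likewise $\beta$ is the element $(\beta_1,\dots,\beta_n)\sigma_{\beta}$; moreover each $\alpha_i$ (resp.\ $\beta_i$) acts on the $\Aut(S)$-valued coordinates of $k$ through the inner automorphism it induces on the complete group $\Aut(S)$ (the standard abuse ``$\beta_i(x)$ for $x\in\Aut(S)$''). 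A routine computation of conjugation in the semidirect product then gives, for $k=(k_1,\dots,k_n)\in K$,
\[
(\alpha(k))_i=\alpha_i(k_{\sigma_{\alpha}^{-1}(i)}),\qquad (\tau_{\beta}(k))_i=\beta_i(k_{\sigma_{\beta}^{-1}(i)}),
\]
and, using that $\beta^2=(\beta_1\beta_{\sigma_{\beta}^{-1}(1)},\dots,\beta_n\beta_{\sigma_{\beta}^{-1}(n)})\sigma_{\beta}^2$,
\[
(\tau_{\beta}^2(k))_i=(\beta_i\circ\beta_{\sigma_{\beta}^{-1}(i)})(k_{\sigma_{\beta}^{-2}(i)}).
\]
Substituting these into $\alpha(k)=k\cdot\tau_{\beta}(k)\cdot\tau_{\beta}^2(k)$ and reading off the $i$-th coordinate for each $i=1,\dots,n$ yields Equation (\ref{coordinateEq}), completing the proof.

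I do not expect any genuine obstacle: this is essentially a bookkeeping exercise once Corollary \ref{shiftCor} is in place. The only points needing care are (i) spelling out why $K$ is characteristic in $H$, so that Corollary \ref{shiftCor} is applicable, and (ii) keeping the index bookkeeping in the wreath-product conjugation consistent — in particular verifying the formula for $\beta^2$ and not confusing $\sigma_{\beta}^{-1}(i)$ with $\sigma_{\beta}(i)$, and being explicit that the symbols $\alpha_i,\beta_i$ serve both as automorphisms of $S$ and as shorthand for the inner automorphisms they induce on the complete group $\Aut(S)$.
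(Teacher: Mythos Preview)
Your proposal is correct and follows exactly the paper's approach: the paper simply writes ``An application of Corollary \ref{shiftCor} with $e:=3$ yields'' and puts a \qed after the statement, so your write-up is just a careful unpacking of that one-line citation together with the (routine) wreath-product coordinate computation. The only additional content you provide---the justification that $K\cha H$ and the explicit formulas for $(\alpha(k))_i$, $(\tau_\beta(k))_i$, $(\tau_{\beta}^2(k))_i$---is precisely the bookkeeping the paper leaves implicit.
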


The idea now is to derive dependencies between certain coordinates of elements $k=(k_1,\ldots,k_n)\in K_{\beta}$ based on Equation (\ref{coordinateEq}). More precisely, we will work with the following concept (defining, or $i=1,\ldots,n$, $\pi_i$ as the $i$-th coordinate projection $\Aut(S)^n\rightarrow\Aut(S)$):

\begin{deffinition}\label{determinedDef}
Let $I\subseteq\{1,\ldots,n\}$, say $I=\{i_1,\ldots,i_j\}$ with $i_1<i_2<\cdots<i_j$, and let $F\subseteq K$ and $C>0$. We say that $F$ is \textbf{$C$-determined by $I$} if and only if for all $k_{i_1},\ldots,k_{i_j}\in\Aut(S)$, there are at most $C$ elements $f\in F$ such that $\pi_{i_l}(f)=k_{i_l}$ for all $l=1,\ldots,j$.
\end{deffinition}

\begin{propposition}\label{determinedProp}
Let $I\subseteq\{1,\ldots,n\}$, $C>0$, and assume that $F\subseteq K$ is $C$-determined by $I$. Then $|F|\leq\frac{C}{|S|^{n-|I|}}\cdot|K|$.
\end{propposition}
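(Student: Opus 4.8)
The plan is to exploit the fact that $F$ is $C$-determined by $I$ to count elements of $F$ via their projections onto the coordinates in $I$. First I would observe that the map $p_I: F \to \Aut(S)^{|I|}$ sending $f \mapsto (\pi_{i_1}(f),\ldots,\pi_{i_j}(f))$ has, by the very definition of $C$-determinedness, the property that each fiber $p_I^{-1}(k_{i_1},\ldots,k_{i_j})$ contains at most $C$ elements of $F$. Hence $|F| = \sum_{y \in \im(p_I)} |p_I^{-1}(y) \cap F| \leq C \cdot |\im(p_I)| \leq C \cdot |\Aut(S)^{|I|}| = C \cdot |\Aut(S)|^{|I|}$.

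It then remains to convert the bound $|F| \leq C\cdot|\Aut(S)|^{|I|}$ into the stated bound $|F| \leq \frac{C}{|S|^{n-|I|}}|K|$. For this I would use that $\Soc(H) = S^n \leq K \leq \Aut(S)^n$, so that $|K| \geq |S^n| = |S|^n$, and also that $K \leq \Aut(S)^n$ gives $|\Aut(S)|^n \geq |K|$. Write $|\Aut(S)|^{|I|} = |\Aut(S)|^n / |\Aut(S)|^{n-|I|}$. Since $|\Aut(S)| \geq |S|$ (as $\Inn(S) \cong S$ embeds in $\Aut(S)$), we have $|\Aut(S)|^{n-|I|} \geq |S|^{n-|I|}$, so $|\Aut(S)|^{|I|} \leq |\Aut(S)|^n / |S|^{n-|I|}$. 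Combining, $|F| \leq C\cdot|\Aut(S)|^{|I|} \leq \frac{C}{|S|^{n-|I|}}\cdot|\Aut(S)|^n$, and this is \emph{weaker} than what is claimed since $|\Aut(S)|^n \geq |K|$ goes the wrong way.

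The previous paragraph shows the naive estimate is not tight enough, so the real argument must be done relative to $K$ rather than relative to all of $\Aut(S)^n$: the key step — and the main (only) subtlety — is to run the fiber-counting argument \emph{inside} $K$ while keeping track of how large the fibers of the coordinate projection $K \to \Aut(S)^{|I|}$ are as subsets of $K$. Concretely, I would argue that $K$ contains $\Soc(H) = S^n$ as a subgroup, and for any $k \in K$, the coset $k\cdot\{(s_1,\ldots,s_n) : s_i = 1 \text{ for } i \in I\} \subseteq K$ consists of $|S|^{n-|I|}$ elements all sharing the same $I$-coordinates as $k$; these cosets partition $K$ into blocks of size $|S|^{n-|I|}$, one block for each value in $\im(p_I : K \to \Aut(S)^{|I|})$ — so $|\im(p_I\restriction K)| = |K|/|S|^{n-|I|}$. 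Now for each $y$ in this image, $F$ meets the corresponding fiber in at most $C$ elements (by $C$-determinedness), and $F$ is entirely contained in the union of these fibers; hence $|F| \leq C\cdot|\im(p_I\restriction K)| = \frac{C}{|S|^{n-|I|}}\cdot|K|$, as required. I expect the only thing needing care is the verification that $p_I$ restricted to $K$ indeed has all fibers of exactly size $|S|^{n-|I|}$, which follows because the normal subgroup $1^{|I|}\times S^{n-|I|}$ (suitably indexed) of $\Aut(S)^n$ lies inside $\Soc(H)\leq K$ and acts simply transitively on each fiber by right translation.
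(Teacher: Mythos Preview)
Your second (real) argument is essentially the paper's own proof: partition $K$ according to the values of the $I$-coordinates, observe that each nonempty block contains at least $|S|^{n-|I|}$ elements because the subgroup $\{s\in S^n:\pi_i(s)=1\text{ for }i\in I\}\leq\Soc(H)\leq K$ translates freely within each block, and use that $F$ meets each block in at most $C$ elements.

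One inaccuracy to flag: you assert that the fibers of $p_I\restriction K$ have size \emph{exactly} $|S|^{n-|I|}$, equivalently that the subgroup $1^{|I|}\times S^{n-|I|}$ acts simply \emph{transitively} on each fiber. This is not true in general: the kernel of $p_I\restriction K$ is $K\cap\prod_{i\notin I}\Aut(S)$, which contains $\prod_{i\notin I}S$ but may be strictly larger (e.g.\ if $K=\Aut(S)^n$). What is true, and what the paper uses, is that each nonempty fiber has size \emph{at least} $|S|^{n-|I|}$ (the action is free, not necessarily transitive), hence $|\im(p_I\restriction K)|\leq|K|/|S|^{n-|I|}$. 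Fortunately the inequality goes the right way for your conclusion, so the argument survives with ``exactly'' replaced by ``at least'' and ``$=$'' by ``$\leq$''.
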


\begin{proof}
Say $I=\{i_1,\ldots,i_j\}$, $i_1<i_2<\cdots<i_j$. For proving the assertion, it is sufficient to give exact covers $(F_{\vec{k}})_{\vec{k}\in\Aut(S)^j}$ and $(K_{\vec{k}})_{\vec{k}\in\Aut(S)^j}$ of $F$ and $K$ respectively such that for all $\vec{k}\in\Aut(S)^j$, $|F_{\vec{k}}|\leq\frac{C}{|S|^{n-|I|}}|K_{\vec{k}}|$. To this end, define, for $X\in\{F,K\}$ and $\vec{k}=(k_{i_1},\ldots,k_{i_j})\in\Aut(S)^j$, $X_{\vec{k}}$ as the set of those $x\in X$ such that $\pi_{i_l}(x)=k_{i_l}$ for $l=1,\ldots,j$.

Clearly, $(X_{\vec{k}})_{\vec{k}\in\Aut(S)^j}$ is an exact cover of $X$ for $X=F,K$, and $F_{\vec{k}}\subseteq K_{\vec{k}}$ for all $\vec{k}\in\Aut(S)^j$. Hence the asserted inequality concerning the cardinalities of $F_{\vec{k}}$ and $K_{\vec{k}}$ is trivial if $K_{\vec{k}}=\emptyset$. On the other hand, if $K_{\vec{k}}\not=\emptyset$, the inequality follows by observing that $|F_{\vec{k}}|\leq C$ by assumption, whereas $|K_{\vec{k}}|\geq|S|^{n-j}$, since if $k\in K_{\vec{k}}$, we also have $kt\in K_{\vec{k}}$ for all $t\in S^n$ with $\pi_{i_l}(t)=1$ for $l=1,\ldots,j$.
\end{proof}

In order to apply Proposition \ref{determinedProp} for our problem of bounding $|K_{\beta}|$, we introduce the following notions:

\begin{deffinition}\label{opportuneDef}
Consider the following definitions:

\begin{enumerate}
\item We call an index $i\in\{1,\ldots,n\}$ \textbf{opportune} if and only if $i$ is not a common fixed point of $\sigma_{\alpha}$ and $\sigma_{\beta}$.

\item For an opportune index $i\in\{1,\ldots,n\}$, we call the set $\Oo_i:=\{i,\sigma_{\alpha}^{-1}(i),\sigma_{\beta}^{-1}(i),\linebreak[4]\sigma_{\beta}^{-2}(i)\}$ an \textbf{opportune index set}.
\end{enumerate}
\end{deffinition}

The next lemma provides the aforementioned bound on $|K_{\beta}|$ based on the cycle structures of $\sigma_{\alpha}$ and $\sigma_{\beta}$:

\begin{lemmma}\label{opportuneLem}
The following hold:

\begin{enumerate}
\item Let $M\in\mathbb{N}$. If there are at least $M$ opportune $i\in\{1,\ldots,n\}$, then there exists a family of $\lceil\frac{M}{16}\rceil$ pairwise disjoint opportune index sets.

\item If $O_1,\ldots,O_t$ are $t$ pairwise disjoint opportune index sets, then there exist $o_i\in O_i$, $i=1,\ldots,t$, such that $K_{\beta}$ is $|S|^{0.882t}$-determined by $\{1,\ldots,n\}\setminus\{o_1,\ldots,o_t\}$.

\item Let $M\in\mathbb{N}$, $M\geq 1$. If there are at least $M$ opportune $i\in\{1,\ldots,n\}$, then $\frac{|K_{\beta}|}{|K|}\leq|S|^{-0.118\lceil M/16\rceil}\leq\min(|S|^{-0.118},60^{-0.118\lceil M/16\rceil})$.
\end{enumerate}
\end{lemmma}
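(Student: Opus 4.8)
The plan is to prove the three parts of Lemma~\ref{opportuneLem} in order, as parts (2) and (3) build on what precedes them. Throughout, recall that $0.882 > E_0 = 0.8817\ldots$, so Corollary~\ref{fulCor} gives $\L_{-1}(\Aut(S)) \leq |S|^{E_0} \leq |S|^{0.882}$ for every nonabelian finite simple group $S$; this is the numerical source of the exponent $0.882$.

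For part (1), the goal is a greedy packing argument. Each opportune index set $\Oo_i = \{i, \sigma_\alpha^{-1}(i), \sigma_\beta^{-1}(i), \sigma_\beta^{-2}(i)\}$ has size at most $4$. I would observe that if $j \in \Oo_i$, then $i \in \{j, \sigma_\alpha(j), \sigma_\beta(j), \sigma_\beta^2(j)\}$, so each index $j$ lies in $\Oo_i$ for at most $4$ values of $i$; hence a single chosen opportune index set $\Oo_i$ can ``block'' at most $4 \cdot 4 = 16$ opportune indices from being centers of a set disjoint from $\Oo_i$. Running the greedy procedure --- repeatedly pick an opportune index whose set is disjoint from all previously chosen ones, then discard it and the (at most $16$) opportune indices it blocks --- therefore yields at least $\lceil M/16 \rceil$ pairwise disjoint opportune index sets from $M$ opportune indices. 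I should double-check the edge cases where $\sigma_\alpha$ or $\sigma_\beta$ has short cycles so that $\Oo_i$ has fewer than $4$ elements; this only helps.

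For part (2), fix pairwise disjoint opportune index sets $O_1, \ldots, O_t$. For each $s$, since the center index $i_s$ of $O_s$ is opportune, at least one of $\sigma_\alpha^{-1}(i_s), \sigma_\beta^{-1}(i_s), \sigma_\beta^{-2}(i_s)$ differs from $i_s$. Using Equation~(\ref{coordinateEq}) at $i = i_s$, I want to choose an index $o_s \in O_s$ that is ``solved for'' by the equation in terms of the other three coordinates. Concretely, if $\sigma_\alpha^{-1}(i_s) \notin \{i_s, \sigma_\beta^{-1}(i_s), \sigma_\beta^{-2}(i_s)\}$, set $o_s := \sigma_\alpha^{-1}(i_s)$; then Equation~(\ref{coordinateEq}) expresses $\alpha_{i_s}(k_{o_s})$ explicitly as a product of terms involving only $k_{i_s}, k_{\sigma_\beta^{-1}(i_s)}, k_{\sigma_\beta^{-2}(i_s)}$, so $k_{o_s}$ is uniquely determined (apply $\alpha_{i_s}^{-1}$) --- a contribution of a factor $1 \leq |S|^{0.882}$. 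Otherwise $\sigma_\alpha^{-1}(i_s) = i_s$, and the opportuneness forces $\sigma_\beta$ to move $i_s$; then Equation~(\ref{coordinateEq}) at $i_s$ becomes an equation of the shape $\alpha_{i_s}(k_{i_s}) = k_{i_s} \cdot (\text{stuff in } k_{\sigma_\beta^{-1}(i_s)}, k_{\sigma_\beta^{-2}(i_s)})$, i.e., after rearranging, a condition saying $k_{i_s}^{-1}\alpha_{i_s}(k_{i_s})$ equals a fixed element, so $k_{i_s}$ ranges over a single coset of $\fix(\alpha_{i_s})$ --- but one has to be careful, since $\fix(\alpha_{i_s})$ can be large. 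I expect this case (where $i_s$ is fixed by $\sigma_\alpha$) to be the main obstacle, and the intended resolution is presumably to pick $o_s := i_s$ and recognize that the constraint then reads, after collecting the $k_{i_s}$-terms on one side, as membership of $k_{i_s}$ in a translate of $\P_{-1}$ of some automorphism of $S$ (via Proposition~\ref{shiftTwoProp} / the $e=3$ analysis), bounded by $\L_{-1}(\Aut(S)) \leq |S|^{0.882}$; if instead $\sigma_\beta^{-2}(i_s) \ne \sigma_\beta^{-1}(i_s)$ one solves for $k_{\sigma_\beta^{-2}(i_s)}$ giving the free factor $1$. In every case the contribution per index set $O_s$ is at most $|S|^{0.882}$, and since the $O_s$ are pairwise disjoint, fixing all coordinates outside $\{o_1,\ldots,o_t\}$ leaves at most $|S|^{0.882 t}$ choices --- i.e. $K_\beta$ is $|S|^{0.882t}$-determined by $\{1,\ldots,n\}\setminus\{o_1,\ldots,o_t\}$.

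For part (3), combine the previous two parts with Proposition~\ref{determinedProp}. If there are at least $M \geq 1$ opportune indices, part~(1) gives $t := \lceil M/16 \rceil$ pairwise disjoint opportune index sets, part~(2) gives indices $o_1, \ldots, o_t$ with $K_\beta$ being $|S|^{0.882t}$-determined by a set $I$ of size $n - t$, and Proposition~\ref{determinedProp} then yields
\[
\frac{|K_\beta|}{|K|} \leq \frac{|S|^{0.882 t}}{|S|^{n - (n-t)}} = |S|^{0.882 t - t} = |S|^{-0.118 t} = |S|^{-0.118 \lceil M/16 \rceil}.
\]
For the final bound, note $\lceil M/16 \rceil \geq 1$ gives $|S|^{-0.118\lceil M/16\rceil} \leq |S|^{-0.118}$, and since $S$ is a nonabelian finite simple group we have $|S| \geq 60$ (the order of $\Alt_5$), so $|S|^{-0.118\lceil M/16\rceil} \leq 60^{-0.118\lceil M/16\rceil}$, giving the stated minimum. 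The only genuinely delicate point in the whole argument is making the case analysis in part~(2) airtight, in particular handling degeneracies among $i_s, \sigma_\alpha^{-1}(i_s), \sigma_\beta^{-1}(i_s), \sigma_\beta^{-2}(i_s)$ so that in each case exactly one coordinate of $O_s$ is eliminated at a cost bounded by $|S|^{0.882}$.
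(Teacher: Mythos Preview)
Your parts (1) and (3) are fine and match the paper's argument. The gap is in part (2), where your case analysis is both incomplete and misdirected.

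You write ``Otherwise $\sigma_\alpha^{-1}(i_s) = i_s$'', but the complement of your first case is $\sigma_\alpha^{-1}(i_s) \in \{i_s, \sigma_\beta^{-1}(i_s), \sigma_\beta^{-2}(i_s)\}$, which has three subcases, not one. You never treat $\sigma_\alpha^{-1}(i_s) = \sigma_\beta^{-1}(i_s) \neq i_s$ or $\sigma_\alpha^{-1}(i_s) = \sigma_\beta^{-2}(i_s) \neq i_s$. Worse, the subcase you omit is exactly where the $\L_{-1}(\Aut(S))$ bound is genuinely required: when the $\sigma_\beta$-cycle of $i_s$ has length $2$ and $\sigma_\alpha^{-1}(i_s) = \sigma_\beta^{-1}(i_s) =: j$, Equation~(\ref{coordinateEq}) reads $\alpha_{i_s}(k_j) = k_{i_s}\,\beta_{i_s}(k_j)\,(\beta_{i_s}\beta_j)(k_{i_s})$. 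Fixing $k_j$, this is precisely a fibre of $\f_{C_2,\beta_{i_s}\beta_j}$ in the sense of Proposition~\ref{shiftTwoProp}, and \emph{that} is where the bound $\L_{-1}(\Aut(S))$ enters.

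Conversely, in the case you \emph{do} treat ($\sigma_\alpha^{-1}(i_s) = i_s$), your attempted resolution is off: you try to constrain $k_{i_s}$, worry that $\fix(\alpha_{i_s})$ may be large, and then speculatively invoke $\P_{-1}$. In fact this case is easy: since $i_s$ is opportune, $\sigma_\beta$ moves $i_s$, and one simply isolates $k_{\sigma_\beta^{-1}(i_s)}$ (or $k_{\sigma_\beta^{-2}(i_s)}$ when the $\sigma_\beta$-cycle has length $\geq 3$) in Equation~(\ref{coordinateEq}) to get a \emph{unique} value, with no appeal to $\L_{-1}$ at all. The paper organises the whole discussion by the length $l$ of the $\sigma_\beta$-cycle through $i_s$: for $l\geq 3$ there are three distinct coordinates and one can always isolate one of them; for $l=2$ one either isolates $k_{\sigma_\beta^{-1}(i_s)}$ (when $\sigma_\alpha^{-1}(i_s)=i_s$) or lands in the Proposition~\ref{shiftTwoProp} situation above (when $\sigma_\alpha^{-1}(i_s)=\sigma_\beta^{-1}(i_s)$); $l=1$ is excluded by opportuneness.
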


\begin{proof}
(3) follows from (1) and (2) via Proposition \ref{determinedProp}.

For (1): This is clear if $M\leq 16$, so assume that $M>16$. Define $\O_0$ as the set of opportune indices $i\in\{1,\ldots,n\}$. Note that $|\O_0|\geq M$ by assumption, fix any $i_0\in\O_0$, and set $\Omega_0:=\Oo_{i_0}$. Assume now that, for some $k\in\{0,\ldots,\lceil M/16\rceil-2\}$, we have already defined a decreasing chain of $k+1$ sets of opportune indices $\O_0\supseteq\O_1\supseteq\cdots\supseteq\O_k$ such that $|\O_k|\geq M-16k$. Assume further that we have defined $k+1$ pairwise disjoint opportune index sets $\Omega_0,\ldots,\Omega_k$ such that for $l=0,\ldots,k-1$, $(\Omega_l\cup\sigma_{\alpha}[\Omega_l]\cup\sigma_{\beta}[\Omega_l]\cup\sigma_{\beta}^2[\Omega_l])\cap\O_{l+1}=\emptyset$. Then set $\O_{k+1}:=\O_k\setminus(\Omega_k\cup\sigma_{\alpha}[\Omega_k]\cup\sigma_{\beta}[\Omega_k]\cup\sigma_{\beta}^2[\Omega_k])$. Noting that $|\O_{k+1}|\geq|\O_k|-16\geq M-16(k+1)>0$, fix any $i_{k+1}\in\O_{k+1}$ and set $\Omega_{k+1}:=\O_{i_{k+1}}$. It is easy to check that with these definitions, the recursive construction is continued, and we can use this to construct $\lceil M/16\rceil$ pairwise disjoint opportune index sets $\Omega_0,\Omega_1,\ldots,\Omega_{\lceil M/16\rceil-1}$, as required.

For (2): By Corollary \ref{fulCor}, it suffices to show that for any opportune index $i$, there exists $r\in \Oo_i=\{i,\sigma_{\alpha}^{-1}(i),\sigma_{\beta}^{-1}(i),\sigma_{\beta}^{-2}(i)\}$ such that upon fixing, for each $t\in \Oo_i\setminus\{r\}$, an element $k_t\in\Aut(S)$, there exists a subset $R\subseteq\Aut(S)$ of size at most $\L_{-1}(\Aut(S))$ such that for all $k\in K_{\beta}$ with $\pi_t(k)=k_t$ for all $t\in \Oo_i\setminus\{r\}$, we have $\pi_r(k)\in R$. We prove the existence of such an $R$ in a case distinction. Write $k=(k_1,\ldots,k_n)$.

\begin{enumerate}
\item Case: $\sigma_{\alpha}^{-1}(i)\notin\{i,\sigma_{\beta}^{-1}(i),\sigma_{\beta}^{-2}(i)\}$. Then by Equation (\ref{coordinateEq}), we see that the $\sigma_{\alpha}^{-1}(i)$-th coordinate of $k$ is fully determined by the $i$-th, $\sigma_{\beta}^{-1}(i)$-th and $\sigma_{\beta}^{-2}(i)$-th coordinates of $k$. In particular, we can choose $R$ of cardinality $1\leq\L_{-1}(\Aut(S))$ in this case.

\item Case: $\sigma_{\alpha}^{-1}(i)\in\{i,\sigma_{\beta}^{-1}(i),\sigma_{\beta}^{-2}(i)\}$. We distinguish further according to the length $l$ of the cycle of $i$ under $\sigma_{\beta}$.

\begin{itemize}
\item Subcase: $l\geq3$. Then it is not difficult to see that one can always isolate one of the three distinct coordinates of $k$ appearing in Equation (\ref{coordinateEq}), so that one can again choose $R$ of cardinality $1$. For example, if $\sigma_{\alpha}^{-1}(i)=i$, Equation (\ref{coordinateEq}) is equivalent to

\[k_{\sigma_{\beta}^{-2}(i)}=(\beta_{\sigma_{\beta}^{-1}(i)}^{-1}\circ\beta_i^{-1})(\beta_i(k_{\sigma_{\beta}^{-1}(i)})^{-1}k_i^{-1}\alpha_i(k_i)).\]

\item Subcase: $l=2$. Then $\sigma_{\beta}^{-2}(i)=i$, so there are only two distinct coordinates in Equation (\ref{coordinateEq}) now, $k_i$ and $k_{\sigma_{\beta}^{-1}(i)}$. If $\sigma_{\alpha}^{-1}(i)=i$, one can isolate $k_{\sigma_{\beta}^{-1}(i)}$, and if $\sigma_{\alpha}^{-1}(i)=\sigma_{\beta}^{-1}(i)=:j$, Equation (\ref{coordinateEq}) turns into

\[\alpha_i(k_j)=k_i\cdot\beta_i(k_j)\cdot(\beta_i\circ\beta_j)(k_i).\]

Hence upon fixing the value of the coordinate $k_j$, the terms $\alpha_i(k_j)=:C_1$ and $\beta_i(k_j)=:C_2$ become constants, and we see that a possible choice for $R$ is the fiber of $C_1$ under the function $\f_{C_2,\beta_i\circ\beta_j}:\Aut(S)\rightarrow\Aut(S)$ (notation as in Proposition \ref{shiftTwoProp}). By Proposition \ref{shiftTwoProp}, this fiber has cardinality bounded from above by $\L_{-1}(\Aut(S))$, as required.

\item Subcase: $l=1$. This subcase cannot occur, since $i$ is opportune.
\end{itemize}
\end{enumerate}
\end{proof}

\section{Proofs of the main results}\label{sec3}

\subsection{Proof of Theorem \ref{mainTheo}(1)}\label{subsec3P1}

It is easy to see that \cite[Theorem 2.6]{Heg05a} is equivalent to the following: \enquote{For a finite \textit{solvable} group $G$ with $\l_{-1}(G)\geq\rho$, we have that $\dl(G)\leq\max(2,\log_{3/4}(2\rho)+3)$.}. This implies more generally that $\dl(\Rad(G))\leq\max(2,\log_{3/4}(2\rho)+3)$ for a finite group $G$ with $\l_{-1}(G)\geq\rho$, since $\l_{-1}(\Rad(G))\geq\l_{-1}(G)$ as $\l_{-1}$ is CS-increasing (see Example \ref{groupTheoEx}(2) and Remark \ref{groupTheoRem}(1)).

It remains to show that for a finite group $G$ with $\l_{-1}(G)\geq\rho$, we have $[G:\Rad(G)]\leq\rho^{E_1}$. To this end, we will show that for finite semisimple groups $H$, $\L_{-1}(H)\leq|H|^{\t(E_0)}$. The assertion then    follows by an application of Lemma \ref{almostSolvLem} with $f:=\l_{-1}$.

The proof of the asserted upper bound on $\L_{-1}(H)$, in turn, is by an application of Lemma \ref{poBoundLem} with $F:=\L_{-1}$, so it suffices to prove that $\L_{-1}(T)\leq|T|^{E_0}$ for nonabelian finite characteristically simple groups $T$.

Write $T=S^n$ with $S$ a nonabelian finite simple group and $n\in\mathbb{N}^+$. We prove that $\L_{-1}(S^n)\leq|S^n|^{E_0}$ by induction on $n$, the induction base $n=1$ following from Corollary \ref{fulCor}. For the induction step, fix an automorphism $\alpha=(\alpha_1\times\cdots\times\alpha_n)\circ\sigma$ of $S^n$ such that $\L_{-1}(\alpha)=\L_{-1}(S^n)$. The decomposition of $\sigma$ into disjoint cycles corresponds to a direct decomposition of $S^n$ into $\alpha$-invariant normal subgroups, whence by the induction hypothesis, we may assume that $\sigma$ is an $n$-cycle, say w.l.o.g~the cycle $(1,2,\ldots,n)$.

Straightforward computations reveal that an element $(s_1,\ldots,s_n)\in S^n$ is inverted by $\alpha$ if and only if $\alpha_1(s_n)=s_1^{-1}$ and for $i=2,\ldots,n-1$, $\alpha_i(s_{i-1})=s_i^{-1}$. These relations imply that for each $s_1\in S$, there is at most one tuple in $S^n$ inverted by $\alpha$ and having first component $s_1$, whence $\L_{-1}(\alpha)\leq|S|=|S^n|^{1/n}\leq|S^n|^{1/2}<|S|^{E_0}$, as required.\qed

\subsection{Proof of Theorem \ref{mainTheo}(2)}\label{subsec3P2}

By Lemma \ref{almostSolvLem}, in order to prove that $\l_2(G)\geq\rho$ implies that $[G:\Rad(G)]\leq\rho^{-4}$, it suffices to show that $\L_2(H)\leq|H|^{3/4}$ for all finite semisimple groups $H$. To see that this holds, fix an automorphism $\alpha$ of $H$ such that $\L_2(\alpha)=\L_2(H)$. We show that $\L_2(\alpha)\leq|H|^{3/4}$ in a simple case distinction:

\begin{enumerate}
\item Case: $|\fix(\alpha)|\leq|H|^{1/4}$. Then by Lemma \ref{lELem} and \cite[Theorem 9]{GR06a}, we have $\L_2(\alpha)\leq\k(H)\cdot|\fix(\alpha)|\leq|H|^{1/2}\cdot|H|^{1/4}=|H|^{3/4}$.

\item Case: $|\fix(\alpha)|>|H|^{1/4}$. Then by Lemma \ref{lTwoLem} with $N:=G:=H$, we have $\L_2(\alpha)\leq[H:\fix(\alpha)]<|H|^{3/4}$.
\end{enumerate}

It remains to show that $\l_2(G)\geq\rho$ implies that $\dl(\Rad(G))\leq2\cdot\log_{3/4}(\rho)+1$. Although, as remarked in Example \ref{groupTheoEx}(3), the function $\l_2$ is not C-submultiplicative, this will be an application of Lemma \ref{dlBoundLem}. The \enquote{trick} is to replace $\l_2$ by an evaluation-wise larger function $\f$ which is C-submultiplicative and apply Lemma \ref{dlBoundLem} to $\f$.

Explicitly, $\f$ is defined as follows: For automorphisms $\alpha,\beta$ of a finite group $G$, we set $\Func(\alpha,\beta):=|\P_2(\alpha\mid \id,\beta)|$ (with notation as in Proposition \ref{shiftProp}), $\Func(G):=\max_{\alpha,\beta\in\Aut(G)}(\Func(\alpha,\beta))$ as well as $\f:=\Func_{\rel}$. Since $\Func(\alpha,\id)=\L_2(\alpha)$, we have that $\Func\geq\L_2$, and thus $\f\geq\l_2$. Hence if we can show that the condition $\f(G)\geq\rho$ implies the mentioned upper bound on $\dl(\Rad(G))$, the stronger condition $\l_2(G)\geq\rho$ implies it as well.

Observing that by Proposition \ref{shiftProp}, for $N\cha G$ and elements $g\in G$, $n\in N$, we have $g,ng\in \P_2(\alpha\mid \id,\beta)$ if and only if $g\in \P_2(\alpha\mid \id,\beta)$ and $n\in \P_2(\alpha_{\mid N}\mid \id_N,(\tau_g\circ\beta)_{\mid N})$, we see with an $N$-coset-wise counting argument that $\Func$, and hence $\f$, is C-submultiplicative. For a successful application of Lemma \ref{dlBoundLem}, it remains to show that for a finite solvable group $H$ with $\dl(H)\geq 2$, we have $\f(H)\leq 3/4$. We show that in general, a finite group $G$ such that $\f(G)>3/4$ is abelian.

To this end, fix automorphisms $\alpha,\beta$ of $G$ such that $|\P_2(\alpha\mid \id,\beta)|>3/4\cdot|G|$. Observe that if $g,h\in G$ are such that $g,h,gh\in \P_2(\alpha\mid \id,\beta)$, then $h$ and $\beta(g)$ commute. Hence for fixed $h\in \P_2(\alpha\mid \id,\beta)$, since $|\P_2(\alpha\mid \id,\beta)\cap \P_2(\alpha\mid \id,\beta)h^{-1}|>1/2\cdot|G|$, the centralizer of $h$ has cardinality greater than $1/2\cdot|G|$ and thus coincides with $G$ by Lagrange's theorem. It follows that $\P_2(\alpha\mid \id,\beta)\subseteq\zeta G$, whence by another application of Lagrange's theorem, $\zeta G=G$, as required.\qed

\subsection{Proof of Theorem \ref{mainTheo}(3)}\label{subsec3P3}

Fix $\rho\in\left(0,1\right]$. We would like to show that the orders of finite semisimple groups $H$ such that $\l_3(H)\geq\rho$ are bounded. Since $H$ embeds into $\Aut(\Soc(H))$, this problem is equivalent to bounding $|\Soc(H)|$. Write $\Soc(H)=S_1^{n_1}\times\cdots\times S_r^{n_r}$, where the $S_i$ are pairwise nonisomorphic nonabelian finite simple groups and $n_i\in\mathbb{N}^+$ for $i=1,\ldots,r$. The task of bounding $|\Soc(H)|$ can be split up into the following two subtasks:

\begin{enumerate}
\item Prove that $\max(|S_1|,\ldots,|S_r|)$ is bounded (\enquote{order bound}).

\item Prove that $\max(n_1,\ldots,n_r)$ is bounded (\enquote{exponent bound}).
\end{enumerate}

We will tackle these two tasks one after the other, but first, we make the following observation to ease notation: View $H$ as a subgroup of $\Aut(\Soc(H))=\Aut(S_1^{n_1})\times\cdots\times\Aut(S_r^{n_r})$. For $i=1,\ldots,r$, denote by $P_i\leq\Aut(\Soc(H))$ the product of the direct factors $\Aut(S_j^{n_j})$ of $\Aut(\Soc(H))$ for $j\not=i$. Set $C_i:=H\cap P_i$. Then $C_i$ is characteristic in $H$; set $H_i:=H/C_i$. By one of the isomorphism theorems, $H_i$ can be identified with a subgroup of $\Aut(\Soc(H))/P_i=\Aut(S_i^{n_i})$ containing $S_i^{n_i}$. Since $\l_3$ is CQ-increasing, we conclude that $\l_3(H_i)\geq\l_3(H)\geq\rho$, and so in carrying out the two subtasks above, we may assume w.l.o.g.~that $r=1$.

Assume thus henceforth that $H$ is a finite semisimple group with characteristically simple socle, say $\Soc(H)=S^n$ for a nonabelian finite simple group $S$ and $n\in\mathbb{N}^+$, and assume that $\l_3(H)\geq\rho$. By Theorem \ref{almostSimpleTheo}, we can fix a constant $C>0$ such that for all nonabelian finite simple groups $S$ with $|S|\geq C$, we have $\frac{\L_3(\Aut(S))}{|S|}\leq|S|^{-0.053}$. Denote by $O$ the maximum outer automorphism group order of a nonabelian finite simple group of order at most $\max(C,\rho^{-1/0.053})$. We will show that

\begin{equation}\label{orderBound}
|S|\leq\max(C,\rho^{-1/0.053})
\end{equation}

and that

\begin{equation}\label{exponentBound}
n\leq 16\cdot\frac{\log_{60}(\rho)}{-0.053}+\log_{(1-\frac{1}{\max(C,\rho^{-1/0.053})})}(\frac{\rho}{O^{16\cdot\frac{\log_{60}(\rho)}{-0.053}}}).
\end{equation}

Using henceforth the notation of Subsection \ref{subsec2P6} throughout, fix an automorphism $\alpha$ of $H$.

For establishing Equation (\ref{orderBound}), assume that $|S|>\max(C,\rho^{-1/0.053})$. Then $|S|>\rho^{-1/0.118}$, and so by Lemma \ref{opportuneLem}(3), cosets $K\beta$ of $K$ in $H$ (where, if $K\beta$ contains any elements cubed by $\alpha$, $\beta$ is chosen to be such an element) that are distinct from $K$ contain less than $\rho|K|$ many elements cubed by $\alpha$ (this is because for such cosets, $\sigma_{\beta}\not=\id$, whence there exists at least one opportune $i\in\{1,\ldots,n\}$). Hence if we can also show that the number of elements of $K$ that are cubed by $\alpha$ is less than $\rho|K|$, we have a contradiction. Note that if $\sigma_{\alpha}\not=\id$, then even under $\sigma_{\beta}=\id$, there still exist opportune indices $i$, and we are done. On the other hand, if $\sigma_{\alpha}=\id$, and thus $\alpha=\alpha_1\times\cdots\times\alpha_n$, then since $|S|>C$, the unique extension of $\alpha_{\mid K}$ to an automorphism of $\Aut(S)^n$ only cubes at most $|S|^{(1-0.053)\cdot n}$ elements in all of $\Aut(S)^n$, and so $\alpha$ only cubes at most a fraction of $|S|^{-0.053\cdot n}\leq|S|^{-0.053}<\rho$ of the elements of $K$, as we wanted to show.

We will also establish Equation (\ref{exponentBound}) by contradiction, so assume that $n$ is larger than the right-hand side of Equation (\ref{exponentBound}). Again, we will reach a contradiction by showing that for each coset $K\beta$ of $K$ in $H$, the number of elements of $K\beta$ that are cubed by $\alpha$ (i.e., the cardinality of the set $K_{\beta}$) is less than $\rho|K|$. We do so in a case distinction according to the value of the number $M$ of opportune indices $i\in\{1,\ldots,n\}$:

\begin{enumerate}
\item Case: $M>16\cdot\frac{\log_{60}(\rho)}{-0.053}$. Then $\lceil\frac{M}{16}\rceil>\frac{\log_{60}(\rho)}{-0.118}$, or equivalently, $60^{-0.118\lceil M/16\rceil}<\rho$, so in view of Lemma \ref{opportuneLem}(3), we are done.

\item Case: $M\leq 16\cdot\frac{\log_{60}(\rho)}{-0.053}$. Note that by assumption, we then have

\begin{equation}\label{nMIneq}
(1-\frac{1}{\max(C,\rho^{-1/0.053})})^{n-M}\cdot O^M<\rho.
\end{equation}

Assume w.l.o.g. that the set of common fixed points of $\sigma_{\alpha}$ and $\sigma_{\beta}$ is $\{1,\ldots,n-M\}$. Let $\pi:\Aut(S)^n\rightarrow\Aut(S)^{n-M}$ be the projection onto the first $n-M$ coordinates. Since we may of course assume that $K\beta$ contains at least one element cubed by $\alpha$, we have $\alpha(\beta)=\beta^3$ by the above convention on the choice of $\beta$. Hence by Equation (\ref{coordinateEq}), we have, for each $k=(k_1,\ldots,k_n)\in K_{\beta}$, that

\begin{equation}\label{coordinateEq2}
\alpha_i(k_i)=k_i\beta_i(k_i)\beta_i^2(k_i)
\end{equation}

for all $i=1,\ldots,n-M$. We use this to prove that 

\begin{equation}\label{piIneq}
|\pi[K_{\beta}]|\leq(1-\frac{1}{\max(C,\rho^{-1/0.053})})^{n-M}\cdot|\pi[K]|.
\end{equation}

To see that Equation (\ref{piIneq}) holds, we count $\pi[S^n]$-coset-wise in $\pi[K]$. Fix such a coset $\pi[S^n]\kappa$ with $\kappa=(\kappa_1,\ldots,\kappa_{n-M})$, assuming w.l.o.g. that $\kappa\in\pi[K_{\beta}]$ and thus by Equation (\ref{coordinateEq2}), $\alpha_i(\kappa_i)=\kappa_i\beta_i(\kappa_i)\beta_i^2(\kappa_i)$, i.e., $\kappa_i\in\P_3(\alpha_i\mid\id,\beta_i,\beta_i^{2})$ (notation from Proposition \ref{shiftProp}), for $i=1,\ldots,n-M$. Analogously, for $s=(s_1,\ldots,s_{n-M})\in S^{n-M}=\pi[S^n]$, we have $s\kappa\in\pi[K_{\beta}]$ only if $s_i\kappa_i\in\P_3(\alpha_i\mid\id,\beta_i,\beta_i^2)$ for all $i=1,\ldots,n-M$. Hence by Proposition \ref{shiftProp}, a necessary condition on $s\in S^{n-M}$ for $s\kappa\in\pi[K_{\beta}]$ to hold is that for $i=1,\ldots,n-M$,

\begin{equation}\label{coordinateEq3}
\alpha_i(s_i)=s_i\cdot(\tau_{\kappa_i}\circ\beta_i)(s_i)\cdot(\tau_{\kappa_i\beta_i(\kappa_i)}\circ\beta_i^2)(s_i).
\end{equation}

By Lemma \ref{nonShiftLem}, for each $i=1,\ldots,n-M$, Equation (\ref{coordinateEq3}) holds for at most $|S|-1$ values of $s_i\in S$, and so $\pi[S^n]$-coset-wise (and thus as a whole), the fraction of elements of $\pi[K]$ that lie in $\pi[K_{\beta}]$ is bounded from above by $(\frac{|S|-1}{|S|})^{n-M}=(1-\frac{1}{|S|})^{n-M}$. Using that $|S|\leq\max(C,\rho^{-1/0.053})$, Equation (\ref{piIneq}) follows.

Observing that $|S|^M\leq|\ker{\pi}|\leq|\Aut(S)|^M$ and using Equations (\ref{nMIneq}) and (\ref{piIneq}), we conclude that

\begin{align*}
\frac{|K_{\beta}|}{|K|} &\leq\frac{(1-\frac{1}{\max(C,\rho^{-1/0.053})})^{n-M}\cdot|\pi[K]|\cdot|\Aut(S)|^M}{|\pi[K]|\cdot|S|^M} \\ &\leq(1-\frac{1}{\max(C,\rho^{-1/0.053})})^{n-M}\cdot O^M<\rho,
\end{align*}

as we wanted to show.
\end{enumerate}\qed

\section{Concluding remarks}\label{sec4}

In retrospect, we proved several results that can be used for a general investigation of the functions $\L_e$ with $e\in\mathbb{Z}$ (such as Lemma \ref{lELem}, Proposition \ref{shiftProp} and Lemma \ref{almostSolvLem}). Still, the techniques used in the proofs of some of our results (such as Lemma \ref{lTwoLem} as well as Lemma \ref{lThreeLem} and Theorem \ref{almostSimpleTheo}, whose proof by Lemma \ref{lThreeLem} was basically reduced to the case $S=\A_1(q)$) were tailored for a particular exponent $e\in\{-1,2,3\}$, and we do not expect those results to have a straightforward generalization to other exponents.

The function $\L_{-1}$ allowed for a particularly nice treatment since it is C-submulti-\linebreak[4]plicative (whence Lemma \ref{poBoundLem} could be applied). However, we also saw in the proof of Theorem \ref{mainTheo}(2) (Subsection \ref{subsec3P2}) that a group-theoretic function $f$ need not be C-submultiplicative itself in order to make use of the respective results of Subsection \ref{subsec2P4} for the investigation of $f$; it suffices to find a C-submultiplicative group-theoretic function $f_0$ that \enquote{majorizes} $f$ (i.e., such that $f(G)\leq f_0(G)$ for all finite groups $G$) and to be able to apply the techniques for C-submultiplicative functions to $f_0$.

In view of this, it might be interesting to note that, as is easy to see with a coset-wise counting argument and using Proposition \ref{shiftProp}, for each $e\in\mathbb{N}^+$, the group-theoretic function $\Ll_e$, defined by $G\mapsto\max_{\alpha,\beta_1,\ldots,\beta_e\in\Aut(G)}(|\P_e(\alpha\mid\beta_1,\ldots,\beta_e)|)$ (for the notation, see Proposition \ref{shiftProp}), majorizes $\L_e$ and is C-submultiplicative. In particular, in order to show that under a condition of the form $\l_3(G)\geq\rho$ for some $\rho\in\left(0,1\right]$, $\dl(\Rad(G))$ is bounded, it would suffice to prove that for large enough numbers $\rho_0\in\left(0,1\right)$ and $k\in\mathbb{N}^+$, all finite solvable groups $H$ with $\dl(H)\geq k$ satisfy $\Ll_3(H)\leq\rho_0|H|$.

While hoping that these ideas will lead to extensions of our main results to other exponents $e\in\mathbb{Z}$, we do note that analoga of our main results do not exist for \textit{all} $e\in\mathbb{Z}$. For example, if there exists a nonabelian finite simple group $S$ such that $e \equiv 1 \Mod{\exp(S)}$ (the smallest such $e>1$ being $31$), then for all $n\in\mathbb{N}$, $\l_e(S^n)=\l_1(S^n)=1$, and so then not even demanding that $\l_e(G)=1$ is enough to ensure that $[G:\Rad(G)]$ is bounded.


\begin{thebibliography}{1}

\bibitem{Bor15b}
A. Bors,
Cycle lengths in finite groups and the size of the solvable radical, preprint (2015),
\href{http://arxiv.org/abs/1501.07172}{arXiv:1501.07172} [math.GR].

\bibitem{Bor15c}
A. Bors,
Finite groups with an automorphism of large order, preprint (2015),
\href{http://arxiv.org/abs/1509.04607}{arXiv:1509.04607} [math.GR].

\bibitem{Bor15d}
A. Bors,
A bound on element orders in the holomorph of a finite group, preprint (2015),
\href{http://arxiv.org/abs/1510.02014}{arXiv:1510.02014} [math.GR].

\bibitem{CCNPW85a}
J. H. Conway, R. T. Curtis, S. P. Norton, R. A. Parker and R. A. Wilson,
\emph{Atlas of finite groups},
Clarendon Press, Oxford, 1985 (reprinted 2013).

\bibitem{Dix67a}
J. D. Dixon,
The Fitting subgroup of a linear solvable group,
\emph{J. Austral. Math. Soc.} \textbf{7} (1967), 417--424.

\bibitem{FG12a}
J. Fulman and R. Guralnick,
Bounds on the numbers and sizes of conjugacy classes in finite Chevalley groups with applications to derangements,
\emph{Trans. Amer. Math. Soc.} \textbf{364}(6) (2012), 3023--3070.

\bibitem{Gal70a}
P. X. Gallagher,
The number of conjugacy classes in a finite group,
\emph{Math. Z.} \textbf{118}(3) (1970), 175--179.

\bibitem{GAP4}
The GAP~Group (2014),
\emph{GAP -- Groups, Algorithms, and Programming. Version 4.7.5},
\url{http://www.gap-system.org}.

\bibitem{GPa}
Groupprops, The Group Properties Wiki (beta),
Automorphism sends more than three-fourths of elements to inverses implies abelian,
\url{http://groupprops.subwiki.org/wiki/Automorphism_sends_more_than_three-fourths_of_elements_to_inverses_implies_abelian}.

\bibitem{GR06a}
R. M. Guralnick and G. R. Robinson,
On the commuting probability in finite groups,
\emph{J. Algebra} \textbf{300} (2006), 509--528.

\bibitem{Gus73a}
W. H. Gustafson,
What is the probability that two group elements commute?,
\emph{Amer. Math. Monthly} \textbf{80} (1973), 1031--1034.

\bibitem{Heg03a}
P. V. Hegarty,
On a conjecture of Zimmerman about group automorphisms,
\emph{Arch. Math. (Basel)} \textbf{80} (2003), 1--11.

\bibitem{Heg05a}
P. V. Hegarty,
Soluble groups with an automorphism inverting many elements,
\emph{Math. Proc. R. Ir. Acad.} \textbf{105A}(1) (2005), 59--73.

\bibitem{Heg09a}
P. V. Hegarty,
Finite groups with an automorphism cubing a large fraction of elements,
\emph{Math. Proc. R. Ir. Acad.} \textbf{109A}(1) (2009), 79--99.

\bibitem{Hor74a}
M. V. Horo\v{s}evski\u{\i},
On automorphisms of finite groups,
\emph{Math. USSR-Sb.} \textbf{22}(4) (1974), 584--594.

\bibitem{Isa76a}
I. M. Isaacs,
\emph{Character theory of finite groups},
Pure and Applied Mathematics 69, Academic Press, New York, 1976.

\bibitem{KohlOut}
S. Kohl,
A bound on the order of the outer automorphism group of a finite simple group of given order, preprint,
\url{http://www.gap-system.org/DevelopersPages/StefanKohl/preprints/outbound.pdf}.

\bibitem{Les87a}
P. Lescot,
Sur certains groupes finis,
\emph{Rev. Math. Sp{\'e}ciales} \textbf{8} (1987), 276--277.

\bibitem{Les88a}
P. Lescot,
Degr{\'e} de commutativit{\'e} et structure d'un groupe fini (1),
\emph{Rev. Math. Sp{\'e}ciales} \textbf{8} (1988), 276--279.

\bibitem{Les89a}
P. Lescot,
Degr{\'e} de commutativit{\'e} et structure d'un groupe fini (2),
\emph{Rev. Math. Sp{\'e}ciales} \textbf{4} (1989), 200--202.

\bibitem{Les95a}
P. Lescot,
Isoclinism classes and commutativity degrees of finite groups,
\emph{J. Algebra} \textbf{177} (1995), 847--869.

\bibitem{Lie73a}
H. Liebeck,
Groups with an automorphism squaring many elements,
\emph{J. Austral. Math. Soc.} \textbf{16} (1973), 33--42.

\bibitem{LP97a}
M. W. Liebeck and L. Pyber,
Upper bounds for the number of conjugacy classes of a finite group,
\emph{J. Algebra} \textbf{198}(2) (1997), 538--562.

\bibitem{Mac75a}
D. MacHale,
Groups with an automorphism cubing many elements,
\emph{J. Austral. Math. Soc.} \textbf{20}(2) (1975), 253--256.

\bibitem{Mil29a}
G. A. Miller,
Groups which admit automorphisms in which exactly three-fourths of the operators correspond to their inverses,
\emph{Bull. Amer. Math. Soc.} \textbf{35}(4) (1929), 559--565.

\bibitem{Mil29b}
G. A. Miller,
Possible $\alpha$-automorphisms of non-abelian groups,
\emph{Proc. Nat. Acad. Sci. U.S.A.} \textbf{15}(2) (1929), 89--91.

\bibitem{OS15a}
A. G. O'Farrell and I. Short,
\emph{Reversibility in Dynamics and Group Theory},
London Mathematical Society Lecture Note Series 416, Cambridge University Press, Cambridge, 2015.

\bibitem{Pot88a}
W. M. Potter,
Nonsolvable groups with an automorphism inverting many elements,
\emph{Arch. Math. (Basel)} \textbf{50}(4) (1988), 292--299.

\bibitem{Rob96a}
D. J. S. Robinson,
\emph{A Course in the Theory of Groups},
Springer (Graduate Texts in Mathematics, 80), New York, 2nd ed. 1996.

\bibitem{Ros75a}
J. S. Rose,
Automorphism groups of groups with trivial centre,
\emph{Proc. London Math. Soc. (3)} \textbf{31}(2) (1975), 167--193.

\bibitem{Row95a}
P. Rowley,
Finite groups admitting a fixed-point-free automorphism group,
\emph{J. Algebra} \textbf{174} (1995), 724--727.

\end{thebibliography}
\end{document}